\theoremstyle{norm}
\newtheorem{thm}{Theorem}[section]
\newtheorem{lem}[thm]{Lemma}
\newtheorem{prop}[thm]{Proposition}
\newtheorem*{mainA}{Theorem A}
\newtheorem*{mainB}{Theorem B}
\newtheorem*{mainC}{Theorem C}
\newtheorem*{mainD}{Theorem D}
\newtheorem{cor}[thm]{Corollary}
\theoremstyle{definition}
\newtheorem{rem}[thm]{Remark}
\newtheorem{df}[thm]{Definition}
\newtheorem{exam}[thm]{Example}
\numberwithin{equation}{section}
\newcommand{\K}{\left\lceil\frac{k}{2n}\right\rceil}
\newcommand{\C}{\widetilde{C}_n}
\newcommand{\Cnk}{\mathcal{C}_n^k}
\newcommand{\Cnleqk}{\mathcal{C}_{n-1}^{\leq k}}
\newcommand{\Z}{\mathbb{Z}}
\newcommand{\R}{\mathbb{R}}
\DeclareMathOperator*{\modulo}{mod}
\DeclareMathOperator*{\Aff}{Aff}
\DeclareMathOperator*{\GL}{GL}
\title[Bijective Projections on Parabolic Quotients of Affine Weyl Groups]{Bijective Projections on Parabolic Quotients of Affine Weyl Groups}
\author[E. Beazley, M. Nichols, M. H. Park, X. Shi, and A. Youcis]{Elizabeth Beazley, Margaret Nichols, Min Hae Park, XiaoLin Shi, and Alexander Youcis}\thanks{The authors were partially supported by NSF grant DMS-0850577}
\address{Department of Mathematics, Haverford College, Haverford, PA}
\email{ebeazley@haverford.edu}
\address{Department of Mathematics, University of Chicago, Chicago, IL, USA}
\email{mnichols@math.uchicago.edu}
\address{Department of Mathematics, Williams College, Williamstown, MA}
\email{Min.Hae.Park@williams.edu}
\address{Department of Mathematics, Harvard University, Cambridge, MA, USA}
\email{dannyshi@math.harvard.edu}
\address{Department of Mathematics, University of California, Berkeley, CA, USA}
\email{ayoucis@math.berkeley.edu}
\begin{document}

\keywords{core partition, abacus diagram, root system, hyperplane arrangement, alcove walk, affine Weyl group, affine Grassmannian}

\begin{abstract}
Affine Weyl groups and their parabolic quotients are used extensively as indexing sets for objects in combinatorics, representation theory, algebraic geometry, and number theory. Moreover, in the classical Lie types we can conveniently realize the elements of these quotients via intuitive geometric and combinatorial models such as abaci, alcoves, coroot lattice points, core partitions, and bounded partitions.  In \cite{BJV} Berg, Jones, and Vazirani described a bijection between $n$-cores with first part equal to $k$ and $(n-1)$-cores with first part less than or equal to $k$, and they interpret this bijection in terms of these other combinatorial models for the quotient of the affine symmetric group by the finite symmetric group.  In this paper we discuss how to generalize the bijection of Berg-Jones-Vazirani to parabolic quotients of affine Weyl groups in type $C$.  We develop techniques using the associated affine hyperplane arrangement to interpret this bijection geometrically as a projection of alcoves onto the hyperplane containing their coroot lattice points.  We are thereby able to analyze this bijective projection in the language of various additional combinatorial models developed by Hanusa and Jones in \cite{HanusaJones}, such as abaci, core partitions, and canonical reduced expressions in the Coxeter group.
\end{abstract}

\maketitle


\section{Introduction}\label{S:intro}

Core partitions initially arose in the study of the representation theory of the symmetric group over a finite field.  In modular representation theory, cores index blocks in the decomposition of the group algebra; see \cite{JamesKerber}.   Cores now appear as indexing sets for many objects in combinatorics, representation theory, algebraic geometry, and number theory. For example, Garvan, Kim, and Stanton combinatorially proved Ramanujan's congruences for the partition function using statistics called cranks, which are closely related to core partitions \cite{GKS}.  In another representation-theoretic context, $n$-cores correspond to extremal vectors in a highest weight crystal for $\widehat{\mathfrak{sl}_n}$; see \cite{MM}.  In algebraic geometry, $n$-cores arise in expansions of the $k$-Schur functions of Lapointe, Lascoux, and Morse \cite{LapMorse05}, which Lam then proved represent the Schubert basis in the homology of the affine Grassmannian \cite{LamkSchur}.  Cores are also related to rational smoothness of Schubert varieties inside the affine Grassmannian as shown by Billey and Mitchell \cite{BilleyMitchell}.

In work related to the study of irreducible Specht modules over the Hecke algebra of the symmetric group  \cite{BV}, Berg and Vazirani prove that there is a bijection between the set $\mathcal{C}_n^k$ of $n$-cores with first part equal to $k$ and the set $\mathcal{C}_{n-1}^{\leq k}$ of $(n-1)$-cores with first part less than or equal to $k$.  Because of the wide array of connections among core partitions and other areas of mathematics, many additional combinatorial models for core partitions have been developed. For example, $n$-cores also index minimal length coset representatives in the quotient $\widetilde{\mathcal{S}}_n/\mathcal{S}_n$ of the affine symmetric group by the finite symmetric group.  There are also interpretations in terms of abacus diagrams, root lattice points, bounded partitions, and certain alcoves in the affine hyperplane arrangement corresponding to the affine symmetric group.  Connections among these models play crucial roles in various areas of mathematics; for example, the connection between cores and bounded partitions was fundamental in the development of the $k$-Schur functions.  In \cite{BJV}, Berg, Jones, and Vazirani interpret the equipotence of $\Cnk$ and $\Cnleqk$ geometrically in terms of the alcove model for $\widetilde{\mathcal{S}}_n/\mathcal{S}_n$, thereby obtaining several additional combinatorial descriptions for this bijection.

Our main theorem is a generalization of the results in \cite{BJV} to Lie type $C$.  In type $C$, this parabolic quotient is known to be in bijection with the set of lecture hall partitions introduced by Bousquet-M\'{e}lou and Eriksson \cite{BME} and certain mirrored $\Z$-permutations defined by Eriksson in his thesis \cite{ErikThesis}.  Hanusa and Jones additionally define bijections from the quotient $\widetilde{C}_n/C_n$ to symmetric core partitions, abacus diagrams, bounded partitions, and canonical reduced expressions in the Coxeter group in \cite{HanusaJones}.  There is also a classical geometric connection to certain hyperplane arrangements through the language of root systems; see \cite{Bourbaki} and \cite{Humphreys}.

The crucial ingredient in many of the results contained in this paper is the ability to provide a geometric interpretation in terms of alcove walks in the affine hyperplane arrangement.  These piecewise linear paths in the real span of the weight lattice were introduced by Littelmann, who calls them Lakshmibai-Seshadri paths \cite{LakSesh}, in order to prove a Littlewood-Richardson rule for decomposing the tensor product of two simple highest weight modules of a complex symmetrizable Kac-Moody algebra into its irreducible components \cite{Littel}.  Alcove walks now arise throughout the literature in representation theory and algebraic geometry, and they often seem in many instances to provide the most natural framework for type-free generalizations of results which had previously only been known in type $A$.  For example, Schwer provides a formula for the Hall-Littlewood polynomials of arbitrary type in terms of alcove walks \cite{Schwer}, and this formula was generalized by Ram and Yip to Macdonald polynomials using similar language \cite{RamYip}.  There is also an explicit correspondence between alcove walks and saturated chains in strong Bruhat order on the affine Weyl group, which gives rise to type-free applications in equivariant $K$-theory of flag varieties \cite{LP1} and the uniform construction of tensor products of certain Kirillov-Reshetikhin crystals \cite{LNSSS}.

\subsection{Summary of the Main Results}\label{sec:summary}



We start by defining a map $\Phi_n$ on elements of the parabolic quotient $\widetilde{C}_n/C_n$.  The map $\Phi_n$ acts on symmetric $(2n)$-cores, which index the minimal length coset representatives of $\widetilde{C}_n/C_n$, as proved in \cite{HanusaJones}.  Given a symmetric $(2n)$-core, the map $\Phi_n$ acts as follows: first, label the boxes of the $(2n)$-core with the elements of $\Z/2n\Z$ repeating along diagonals by labeling box $(i,j)$ with $j - i \pmod{2n}$.  Then, delete all the rows that end with the same element of $\Z/2n\Z$ as the first row.  Finally, delete all the columns that end with the same element of $\Z/2n\Z$ as the first column.  It can be shown that the image of $\Phi_n$ is a set of $(2n-2)$-cores, which correspond to minimal length coset representatives of $\widetilde{C}_{n-1}/C_{n-1}$.

\begin{mainA}[Theorem~\ref{MainTheorem1}] The map $\Phi_n^k$ given by restricting $\Phi_n$ to $\mathscr{S}_{2n}^k$, the set of symmetric $(2n)$-cores with first part equal to $k$, becomes a bijection onto its image $\mathscr{S}_{2n-2}^{\leq k - \lceil \frac{k}{n} \rceil}$, the set of symmetric $(2n-2)$-cores with first part at most $k - \left\lceil \frac{k}{n} \right\rceil$.
\end{mainA}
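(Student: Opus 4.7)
The plan is to combine a direct combinatorial analysis of $\Phi_n$ with the alcove and abacus models of Hanusa--Jones, in order to realize $\Phi_n$ as a geometric projection, as advertised in the introduction.

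I would first establish that $\Phi_n$ sends $\mathscr{S}_{2n}^k$ into the set of symmetric $(2n-2)$-cores. The row-deletion step removes exactly those rows whose terminal diagonal residue matches that of the first row. Because $\lambda = \lambda'$ and transposition negates residue labels modulo $2n$, the row- and column-deletion steps are interchanged by transposition, so the resulting diagram is again a symmetric Young diagram. A residue-based hook-length count, using the hypothesis that no hook of length $2n$ appears in $\lambda$, then shows that no hook of length $2n-2$ appears in the image, so $\Phi_n(\lambda)$ is indeed a symmetric $(2n-2)$-core.

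Next I would verify that the first part of $\Phi_n(\lambda)$ is at most $k - \lceil k/n \rceil$. The first row of $\lambda$ has length $k$, so by symmetry the first column also has length $k$, and its last box has residue $1 - k \pmod{2n}$. Columns of $\lambda$ sharing this terminal residue occur at predictable positions, and the structural constraints imposed by the symmetric $(2n)$-core condition force exactly $\lceil k/n \rceil$ such columns to lie in $\{1, \dots, k\}$. Hence the first surviving row of $\Phi_n(\lambda)$ has length at most $k - \lceil k/n \rceil$.

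For bijectivity, I would construct an explicit inverse $\Psi_n$ defined on $\mathscr{S}_{2n-2}^{\leq k - \lceil k/n \rceil}$. Given $\mu$ in this set, one reconstructs $\lambda$ by inserting rows and columns of prescribed residue pattern in the unique positions dictated by the requirement that the resulting symmetric $(2n)$-core has first part equal to $k$. Geometrically, the coroot lattice point attached to $\mu$ lies on a codimension-one affine subspace, and one lifts it to the unique coroot lattice point of the prescribed level above. The identities $\Psi_n \circ \Phi_n^k = \Id$ and $\Phi_n^k \circ \Psi_n = \Id$ then follow from uniqueness of the lift, together with the observation that reinserting exactly the deleted residue data recovers the original diagram.

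The main obstacle I anticipate is pinning down the precise residue count yielding $\lceil k/n \rceil$. Tracking simultaneously how the symmetric-core condition constrains row lengths, how diagonal residues align between $\lambda$ and its image, and how the shape of $\mu$ forces the positions of the reinserted rows and columns under $\Psi_n$, requires a coordinated argument that will likely be cleanest in the abacus model, where the row- and column-deletion operations translate into a controlled rearrangement of beads on runners.
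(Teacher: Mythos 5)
Your overall route---transferring $\Phi_n$ to the abacus and coroot-lattice models, where row-and-column deletion becomes deletion of a symmetric pair of runners and hence of a single coordinate of the lattice point, and inverting by reinsertion---is the same strategy the paper follows: it proves Proposition~\ref{delRowCol} to identify $\Phi_n$ with runner deletion, computes the first part from the position of the largest bead (Lemma~\ref{lemLambda1}, Corollary~\ref{corLambda1}, Proposition~\ref{imageContainedCore}), and then proves Theorem~\ref{MainTheorem1} by deleting and reinserting the redundant coordinate $\pm\left\lceil \frac{k}{2n}\right\rceil$. Your direct Young-diagram arguments for symmetry of the image and for the count $\left\lceil \frac{k}{n}\right\rceil$ are consistent with those abacus computations, though they are only sketched.

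The genuine gap is in the bijectivity step. Injectivity is indeed immediate once the deleted coordinate is known to be constant on the domain, but surjectivity onto $\mathscr{S}_{2n-2}^{\leq k-\lceil k/n\rceil}$ does not follow from ``uniqueness of the lift.'' You must show that for \emph{every} $\mu$ in the codomain, the lattice point obtained by inserting $\left\lceil \frac{k}{2n}\right\rceil$ (or $-\left\lceil \frac{k}{2n}\right\rceil$) in the prescribed slot corresponds to a symmetric $(2n)$-core whose first part is \emph{exactly} $k$, i.e.\ whose rightmost largest bead sits at level $\left\lceil \frac{k}{2n}\right\rceil$ on runner $k \bmod 2n$. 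That requires characterizing both the domain and the codomain by explicit systems of coordinate inequalities and checking that the two systems match after insertion; this is precisely the content of the paper's Proposition~\ref{domainHyperplane}, and the inequalities are delicate (strict on both sides for indices after $\ell_1$, weak on one side for indices before it, with shifted index ranges in the $(2n-2)$-runner picture and a separate case when $k \bmod 2n > n$). Your phrase ``the unique positions dictated by the requirement that the resulting symmetric $(2n)$-core has first part equal to $k$'' presupposes that such a preimage exists, which is exactly the surjectivity to be proved. Until those two inequality systems are written down and matched, the argument is incomplete.
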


The proof of Theorem A uses the bijection between symmetric $(2n)$-cores and balanced flush abacus diagrams with $(2n)$-runners, as introduced by Hanusa-Jones \cite{HanusaJones}.  Under this bijection $F_{\mathscr{A}}$, the map $\Phi_n$ induces a map $\mathscr{A}_{2n} \longrightarrow \mathscr{A}_{2n-2}$ on abacus diagrams with $(2n)$ runners to those with $(2n-2)$ runners.   In addition, there is a bijection $F_{\mathscr{R}}$ from abacus diagrams on $(2n)$-runners to lattice points in $\Z^n$.  We are able to explicitly describe the bijections $\Phi_n^k$ and their (co)domains in terms of all of these combinatorial models for the parabolic quotient $\widetilde{C}_n/C_n$ in a manner which makes the diagram below commute.  It turns out that the induced map $\Phi_n$ on abacus diagrams is the most natural to describe.  Theorem A is then proved by translating the condition imposed on symmetric cores to the corresponding abacus diagrams and coroot lattice points. 

\begin{equation*}\label{E:CommDiagram}
 \xymatrix{
\mathscr{S}_{2n} \ar[d]^{\Phi_n} \ar[r]^{F_{\mathscr{A}}} & \mathscr{A}_{2n}    \ar[r]^{F_{\mathscr{R}}} \ar[d]^{\Phi_n} & \mathscr{R}_{2n} \ar[d]^{\Phi_n}  \\ 
\mathscr{S}_{2n-2}  \ar[r]^{F_{\mathscr{A}}}  & \mathscr{A}_{2n-2} \ar[r]^{F_{\mathscr{R}}}  & \mathscr{R}_{2n-2}  
}
\end{equation*}



We also interpret the bijections $\Phi_n^k$ both algebraically and geometrically.  In terms of reduced words in the quotient of the corresponding Coxeter group $\widetilde{C}_n/C_n$, the result we obtain is the following.

\begin{mainB}[Corollaries \ref{MainTheorem2a} and \ref{MainTheorem2b}] For any reduced word in $\widetilde{C}_n/C_n$ corresponding to a symmetric $(2n)$-core with first part equal to $k$, applying $\Phi_n$ decreases the length of the word by exactly $k$.
\end{mainB}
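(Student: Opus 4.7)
The plan is to exploit the commutative diagram from Section~\ref{sec:summary} together with the length statistics of Hanusa--Jones. Since all reduced words for a fixed element of a Coxeter group have the same length, it suffices to prove the identity $\ell(w_\lambda)-\ell(w_{\Phi_n(\lambda)})=k$, where $w_\mu$ denotes the minimal length coset representative indexed by a symmetric core $\mu$, and the two lengths are taken in $\widetilde{C}_n$ and $\widetilde{C}_{n-1}$ respectively. In this way the corollary is really an additive statistic identity relating two models of the same combinatorial object.

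First I would recall (or extract from \cite{HanusaJones}) an explicit formula for $\ell(w_\lambda)$ as a statistic on the associated balanced flush abacus $F_{\mathscr{A}}(\lambda)$, or equivalently on the coroot lattice point $F_{\mathscr{R}}(F_{\mathscr{A}}(\lambda))\in\mathbb{Z}^n$. On the lattice side this is the classical sum $\sum_{\alpha\in\Phi^+}|\langle\,\cdot\,,\alpha\rangle|$ over the positive roots of $C_n$. Next I would use the proof of Theorem~\ref{MainTheorem1} to read off the precise action of $\Phi_n$ on the abacus: two runners, symmetric about the central axis and labeled by the residues $k-1$ and $1-k$ modulo $2n$, are removed, and the remaining $2n-2$ runners are re-indexed to form a valid balanced flush abacus in $\mathscr{A}_{2n-2}$.

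With these ingredients in place, the length change splits into two pieces: the contribution of the positive roots supported on the two deleted runners, and a correction coming from the re-indexing of the $2n-2$ surviving runners. The self-conjugacy of $\lambda$ forces the deleted runners to be paired by the central symmetry of the abacus, so the individual re-indexing shifts on the surviving runners cancel in matched pairs, leaving only the contribution of the discarded coordinates. The residue condition defining $\Phi_n$ ties the removed runner directly to the terminal box of the first row, which has label $k-1\pmod{2n}$. Translating this back through $F_{\mathscr{R}}$ and the length formula, the surviving contribution amounts exactly to the first-row length $k$.

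The main technical obstacle will be the bookkeeping for the residue re-indexing from modulo $2n$ to modulo $2n-2$: showing cleanly that the shifts induced on the surviving runners do not contribute to $\ell(w_\lambda)-\ell(w_{\Phi_n(\lambda)})$. The route I would pursue is to use the self-conjugacy of the abacus together with the explicit description of $\Phi_n$ at the lattice level already produced in the proof of Theorem~\ref{MainTheorem1}, pairing up the shifts by the central involution so that they cancel in the length sum. A plausible alternative, if the abacus bookkeeping becomes unwieldy, is to prove the identity directly at the level of symmetric cores by identifying a length statistic on $\lambda$ (e.g.\ a weighted count of boxes of a prescribed parity or residue) and then checking that the rows and columns removed by $\Phi_n$ carry total statistic $k$, using that the only ``marked'' row not matched to a symmetric partner is the first row itself, of length $k$.
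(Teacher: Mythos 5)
Your route is genuinely different from both of the paper's proofs: the paper establishes this statement once geometrically (Theorem~\ref{MainTheorem3} together with the Walk Lifting Lemma~\ref{walk}: a minimal alcove walk to $\mathcal{A}_w$ projects to a walk for $\Phi_n^k(w)$ containing exactly $k$ repeated alcoves, namely the $k$ perpendicular steps guaranteed by Proposition~\ref{perp steps}), and once algebraically (the letter-by-letter algorithm of Section~\ref{sectionAction}, in which exactly $\lambda_1=k$ generators collapse to the identity, followed by a contradiction argument showing the resulting word is reduced). You instead propose evaluating a closed-form length statistic on coroot lattice points and subtracting; since the proof of Theorem~\ref{MainTheorem1} shows that $\Phi_n^k$ simply deletes one fixed coordinate, this would reduce the claim to a short computation over the positive roots involving $\varepsilon_{\ell_1}$. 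That is a legitimate, and potentially shorter, third proof --- but as written it has a concrete gap.

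The gap is in the length formula itself: $\sum_{\alpha\in\Phi^+}|\langle\lambda^{\vee},\alpha\rangle|$ is the length of the \emph{translation} $t_{\lambda^{\vee}}$, not of the minimal length coset representative of $t_{\lambda^{\vee}}W$; the latter is $\sum_{\alpha\in\Phi^+}|\langle\lambda^{\vee},\alpha\rangle|-\#\{\alpha\in\Phi^{+}:\langle\lambda^{\vee},\alpha\rangle>0\}$. With your uncorrected formula the length drop comes out to $2n\left\lceil\frac{k}{2n}\right\rceil$ rather than $k$ (for $n=2$, $k=1$ it gives $4$, not $1$), so the step as stated fails. Moreover, even with the corrected formula, evaluating the difference requires the inequalities of Proposition~\ref{domainHyperplane}, which bound every surviving coordinate $a_i$ by $\left\lceil\frac{k}{2n}\right\rceil=c$ in absolute value; these are what resolve each $|a_i\pm c|$ to $c\mp a_i$, make the $a_i$ cancel, and yield the total $2nc-2n+\ell_1=k$. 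Your proposal attributes the cancellation to self-conjugacy and a pairing of ``re-indexing shifts,'' but at the coroot-lattice level there is no re-indexing at all (deleting a coordinate of $\mathbb{Z}^n$ lands directly in $\mathbb{Z}^{n-1}$), and the cancellation actually comes from the domain bounds, which you never invoke. Once you commit to the lattice model, the ``main technical obstacle'' you identify dissolves, and the real work is (a) citing or proving the corrected length formula and (b) feeding in Proposition~\ref{domainHyperplane}.
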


The novelty of Theorem B lies more in the method of its proof, which uses delicate geometric arguments on the associated affine hyperplane arrangement.  In fact, we provide two distinct proofs of Theorem B, one geometric and the other algebraic.  The geometric proof uses the theory of alcove walks to study $\Phi_n$.  The correspondence between the coroot lattice and reduced words in $\widetilde{C}_n/C_n$ gives an induced action of $\Phi_n$ on alcoves in $\R^n$.  Under this correspondence, the lattice points of the alcoves corresponding to elements of $\mathscr{S}_{2n}^k$ all lie on a single hyperplane.  Moreover, when we identify this hyperplane with the Euclidean space $\R^{n-1}$ associated to $\widetilde{C}_{n-1}$, the map $\Phi_n^k$ may be realized as a geometric projection of the alcoves onto the hyperplane containing their coroot lattice points. 
\begin{figure*}
\begin{center}
  \includegraphics[scale=.85]{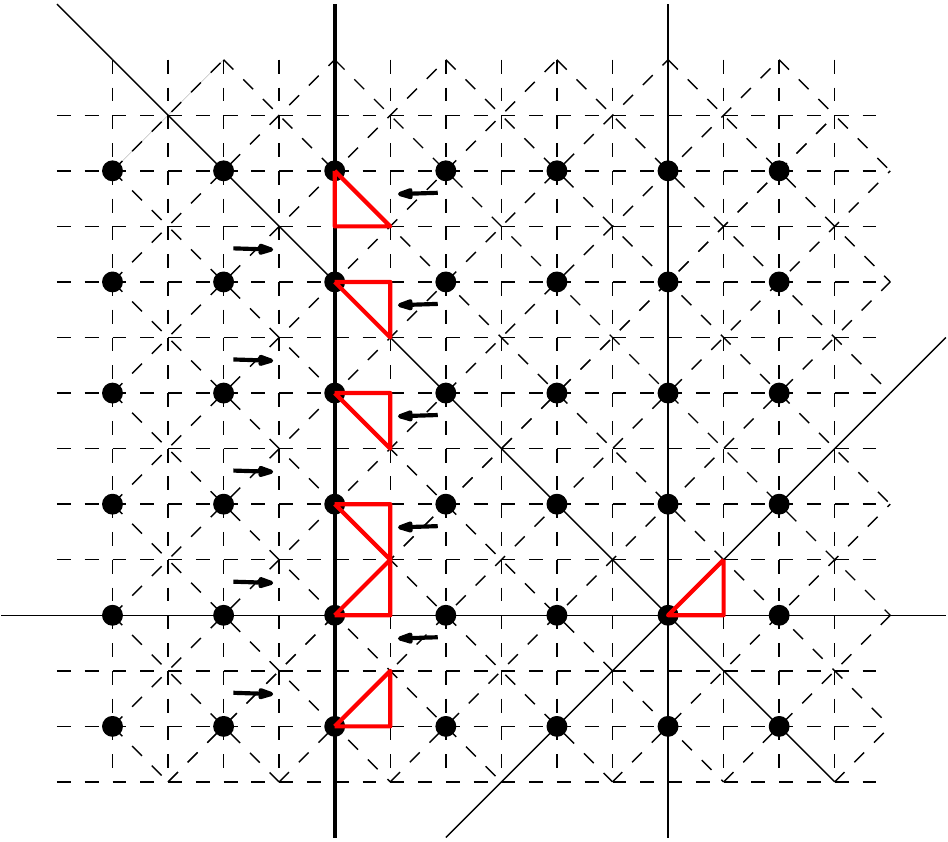}
\includegraphics[scale=.85]{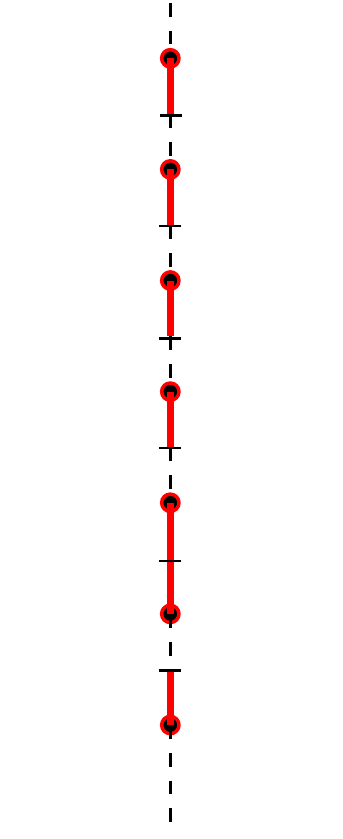}
\caption{A visualization of $\Phi_2^{12}$ as a projection from $\widetilde{C}_2/C_2$ to $\widetilde{C}_1/C_1$.}
\label{fig:projection}       
\end{center}
\end{figure*}
 In particular, using the correspondence between reduced words and minimal length alcove walks, we obtain the following.

\begin{mainC}[Theorem~\ref{MainTheorem3}] Let $w$ be a minimal length coset representative for $\widetilde{C}_n/C_n$ such that the symmetric core partition corresponding to $w$ has first part equal to $k$.  If 
$$\mathcal{A}^1 \to \cdots \to \mathcal{A}^r $$
is an alcove walk for $w$, then 
\begin{equation}\label{Main3}
\pi(\mathcal{A}^1) \to \cdots \to \pi(\mathcal{A}^r) 
\end{equation}
is an alcove walk for $\Phi_n(w)$.  Here, $\pi$ is the projection onto the hyperplane containing the coroot lattice points of the symmetric $(2n)$-core partitions with first part $k$.  Moreover, if one removes all repetitions of the alcoves in~\eqref{Main3}, the result is a minimal length alcove walk for $\Phi_n(w)$.
\end{mainC}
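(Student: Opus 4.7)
The plan is to combine the algebraic description of $\Phi_n$ established while proving Theorem A (the commuting diagram relating its action on cores, abaci, and coroot lattice points) with the standard correspondence between reduced words and alcove walks, and to show that orthogonal projection onto the distinguished hyperplane realizes $\Phi_n$ geometrically.

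First I would unwind definitions. An alcove walk $\mathcal{A}^1 \to \cdots \to \mathcal{A}^r$ for $w$ corresponds to a factorization of $w$ by simple reflections, with each step $\mathcal{A}^i \to \mathcal{A}^{i+1}$ crossing a single affine hyperplane $H_i$ of the $\widetilde{C}_n$-arrangement; the terminal alcove $\mathcal{A}^r$ is $w \cdot A_0$, whose associated coroot lattice point lies on the hyperplane identified in the theorem statement. Under the identification of that hyperplane with the Euclidean space of $\widetilde{C}_{n-1}$, the map $\pi$ becomes an affine surjection $\R^n \to \R^{n-1}$.

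The crux of the argument is the following geometric claim: every affine hyperplane of the $\widetilde{C}_{n-1}$-arrangement pulls back under $\pi$ to an affine hyperplane of the $\widetilde{C}_n$-arrangement. I expect this to be the main obstacle, and I would prove it by explicit computation in the type $C$ root coordinates, using the precise description of $\Phi_n$ on coroot lattice points from the commuting diagram. Once this is established, each alcove $\mathcal{A}^i$ of $\widetilde{C}_n$ projects into a single alcove $\mathcal{B}^i$ of $\widetilde{C}_{n-1}$, so $\pi(\mathcal{A}^i) = \mathcal{B}^i$ is well-defined. Moreover, the wall $H_i$ between $\mathcal{A}^i$ and $\mathcal{A}^{i+1}$ must either be the pullback of a wall of $\widetilde{C}_{n-1}$, in which case $\mathcal{B}^i$ and $\mathcal{B}^{i+1}$ are distinct and adjacent, or else be parallel to the kernel direction of $\pi$, in which case $\mathcal{B}^i = \mathcal{B}^{i+1}$ is a repetition. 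Either way, \eqref{Main3} is a valid alcove walk in the $\widetilde{C}_{n-1}$-arrangement after collapsing consecutive repetitions.

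To identify the endpoint, I would appeal to the commuting diagram: the coroot lattice point of $\mathcal{A}^r$ maps to that of $\Phi_n(w) \cdot A_0$ by construction, so $\mathcal{B}^r = \Phi_n(w) \cdot A_0$. Finally, for minimality, I would invoke Theorem B, which gives $\ell(\Phi_n(w)) = \ell(w) - k$. The original walk is minimal of length $r = \ell(w)$, so the collapsed walk has length $r - (\text{number of repetitions})$. Since any alcove walk from the fundamental alcove to $\Phi_n(w) \cdot A_0$ has length at least $\ell(\Phi_n(w)) = r - k$, there are at most $k$ repetitions; the classification of the walls $H_i$ in the previous paragraph, together with the explicit count in the proof of Theorem B of the simple reflections killed by $\Phi_n$, matches this bound and forces the collapsed walk to have length exactly $\ell(\Phi_n(w))$, hence to be minimal.
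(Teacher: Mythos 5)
Your geometric setup for the first half is essentially sound and runs parallel to the paper's: the walls of the $\widetilde{C}_{n-1}$-arrangement pull back under $\pi$ to walls of the $\widetilde{C}_n$-arrangement (this is in effect the content of Lemma~\ref{alcove}), each step of the walk either crosses a pullback wall (and survives as a genuine step downstairs) or a non-pullback wall (and collapses to a repetition), so the projected sequence is an alcove walk. One slip in the bookkeeping: you describe the collapsing walls as ``parallel to the kernel direction of $\pi$,'' but it is the other way around --- a wall $H_{\alpha,m}$ is a pullback exactly when it contains the kernel direction $\varepsilon_{\ell}$, i.e.\ when $\alpha\perp\varepsilon_{\ell}$, and it is the walls \emph{transverse} to the kernel that collapse. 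The intent is recoverable, but as written your two cases coincide.

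The genuine gap is in the endpoint identification and, consequently, in the minimality argument. The commuting diagram on coroot lattice points tells you only that $\pi(\mathcal{A}^r)$ lies in the \emph{coset} of $\Phi_n(w)$ --- a union of $|C_{n-1}|$ alcoves --- not that it equals the distinguished alcove $\Phi_n(w)\cdot\mathcal{A}_{\circ}$. Proving that $\pi$ carries distinguished alcoves to distinguished alcoves is the hard part of the paper's argument (Theorem~\ref{phi(w)}), and it requires the good alcoves of Definition~\ref{goodAlcoves}, Lemma~\ref{existGoodAlcoves}, and the Walk Lifting Lemma~\ref{walk}. Your attempt to recover both the endpoint and minimality from Theorem B has two problems. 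First, the length comparison only bounds the number of repetitions \emph{above} by $k$: the collapsed walk ends at some $u\cdot\mathcal{A}_{\circ}$ in the correct coset, so its length is at least $\ell(u)\geq\ell(\Phi_n(w))=\ell(w)-k$, and nothing forces equality; to close the argument you need the reverse inequality, namely that \emph{at least} $k$ of the hyperplanes separating $\mathcal{A}_{\circ}$ from $\mathcal{A}_w$ are non-pullbacks. That count is precisely what the paper supplies with Lemma~\ref{lemming} (a direct abacus computation showing exactly $k$ hyperplanes separate $\mathcal{A}_{\circ}$ from $\mathcal{A}_{\circ}'$) and Proposition~\ref{perp steps}, and your appeal to ``the explicit count in the proof of Theorem B'' does not substitute for it: relating the generators killed by the abacus algorithm of Section~\ref{sectionAction} to the collapsing walls of an arbitrary minimal walk is essentially the statement being proved. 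Second, be careful which proof of Theorem B you invoke: the geometric version (Corollary~\ref{MainTheorem2a}) is itself deduced from Theorem C, so to avoid circularity you must use the independent algebraic version (Corollary~\ref{MainTheorem2b}).
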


The geometric proof of Theorem B relies upon the interpretation of the projection in terms of alcove walks articulated in Theorem C.  We remark that by considering the root system of type $A$ instead of type $C$, our proof of Theorem C can be modified to yield a similar statement for the projection map in type $A$, which sends $n$-cores with first part equal to $k$ to $(n-1)$-cores with first part less than or equal to $k$.  The algebraic proof of Theorem B presents an explicit description of $\Phi_n$ on reduced words.  Given a minimal length coset representative $w \in \widetilde{C}_n/C_n$, there is an action of the reduced word on the corresponding abacus diagram.  We analyze this action on abaci and provide an explicit algorithm that constructs a reduced word for $\Phi_n(w)$ in $\ell(w)$-steps.


The map $\Phi_n$ also exhibits other nice combinatorial properties which suggest applications to other areas of mathematics, particularly in the direction of affine Schubert calculus.  For example, we prove the following theorem, showing that $\Phi_n$ preserves strong Bruhat order.

\begin{mainD}[Theorem~\ref{BruhatPreserved}] Fix two positive integers $n$ and $k$.  Given two elements $x$ and $y$ in $\C/C_n$ whose associated coroot lattice points lie in the domain of $\Phi_n^k$, then $x \geq_B y$ if and only if $\pi(x) \geq_B \pi(y)$. 
\end{mainD}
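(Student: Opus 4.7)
My approach combines the subword characterization of strong Bruhat order with the projection of alcove walks established in Theorem C. Recall that in any Coxeter group, $u \leq_B v$ if and only if some reduced expression for $v$ contains a reduced expression for $u$ as a subword, which translates under the alcove model into saying that a minimal length alcove walk for $v$ admits a ``folded'' subwalk realizing a minimal length walk for $u$. Because the Bruhat order on the parabolic quotient $\widetilde{C}_n/C_n$ coincides with the order induced from $\widetilde{C}_n$ on minimal length coset representatives, we may apply this characterization directly.

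For the forward direction, suppose $x \geq_B y$ with both in the domain of $\Phi_n^k$. Choose a reduced expression $\mathbf{x}$ for $x$ whose associated minimal length alcove walk $\mathcal{W}_x$ contains a subwalk $\mathcal{W}_y$ realizing a reduced expression for $y$. By Theorem C, the projection $\pi$ applied to $\mathcal{W}_x$ and to $\mathcal{W}_y$, followed by removing repetitions, yields minimal length alcove walks for $\pi(x)$ and $\pi(y)$. Since $\pi$ acts on alcoves pointwise, a step of $\mathcal{W}_x$ collapses to a repetition in $\pi(\mathcal{W}_x)$ exactly when the corresponding step (if present in $\mathcal{W}_y$) collapses in $\pi(\mathcal{W}_y)$, so the subword structure survives the repetition removal. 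This gives a reduced expression for $\pi(y)$ as a subword of one for $\pi(x)$, whence $\pi(x) \geq_B \pi(y)$.

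For the reverse direction, use that $\Phi_n^k$ is a bijection by Theorem A. Given $\pi(x) \geq_B \pi(y)$, pick a reduced expression $\bar{\mathbf{x}}$ for $\pi(x)$ containing a reduced expression $\bar{\mathbf{y}}$ for $\pi(y)$ as a subword, and lift $\bar{\mathbf{x}}$ to a reduced expression $\mathbf{x}$ for $x$ via the inverse of the explicit algorithm on reduced words from the algebraic proof of Theorem B, which inserts precisely $k$ letters. Because the inserted letters are governed by the combinatorics of the underlying abacus diagram, and this combinatorics restricts compatibly to subwalks, the same insertions applied to $\bar{\mathbf{y}}$ produce a reduced expression for $y$ as a subword of $\mathbf{x}$, giving $x \geq_B y$. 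The main obstacle is precisely this reverse direction: verifying that the $k$ letters inserted to reconstruct $\mathbf{x}$ from $\bar{\mathbf{x}}$ can be placed so that restricting to the positions of $\bar{\mathbf{y}}$ recovers the corresponding lift of $y$. Establishing this requires a careful analysis of how the insertion algorithm interacts with folded subwalks, likely by tracking the beads on the abacus runners that are moved by each inserted generator and observing that these moves depend only on the current alcove, not on the global walk.
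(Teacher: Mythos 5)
Your proposal takes a genuinely different route from the paper, but as written it contains real gaps in both directions. The forward direction hinges on the claim that ``a step of $\mathcal{W}_x$ collapses to a repetition in $\pi(\mathcal{W}_x)$ exactly when the corresponding step (if present in $\mathcal{W}_y$) collapses in $\pi(\mathcal{W}_y)$.'' This is not obvious and is in general false at the level of individual steps: whether a step collapses under $\pi$ is determined by whether the actual hyperplane crossed is perpendicular to $H_n^k$ (Definition~\ref{perpstep}), and in a folded subwalk the hyperplane crossed at a selected position is a reflected image of the hyperplane crossed by the ambient walk at that position. Reflection can change the perpendicularity type --- for instance, reflecting $H_{\varepsilon_2-\varepsilon_3,0}$ (parallel to $H_n^k$ when $\ell_1=1$) across $H_{\varepsilon_1-\varepsilon_2,0}$ yields $H_{\varepsilon_1-\varepsilon_3,0}$, which is not parallel. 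Equivalently, in the algebraic picture of Section~\ref{sectionAction}, whether a given letter $s_{i_j}$ ``moves the largest runner'' depends on the abacus it acts on, and the partial products of the subword for $y$ produce different abaci than the partial products of the full word for $x$. So the set of collapsing positions for $y$ need not be the intersection of the selected positions with the collapsing positions for $x$, and the subword structure is not automatically preserved. The reverse direction you explicitly flag as unresolved, and it is: you would need to show the $k$ inserted letters can be chosen compatibly with the subword positions, which is precisely the kind of delicate interaction your plan defers.

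For contrast, the paper sidesteps alcove walks and reduced words entirely. It invokes the equivalence between strong Bruhat order and containment of core partitions (Theorem~\ref{coreContainment}, from Hanusa--Jones), reformulates containment as the condition that the $k^{\text{th}}$ highest bead of one abacus is at least as high as the $k^{\text{th}}$ highest bead of the other for every $k$ (Lemma~\ref{kthBead}), and then observes that $\pi$ deletes the \emph{same} beads from both abaci, so the relative ranking of bead heights is preserved under both deletion and insertion. This gives both implications in a few lines with no bookkeeping of folded walks. If you want to pursue your route, the missing ingredient is a lemma showing that the perpendicular/parallel decomposition of a walk is compatible with folding along a reduced subword; absent that, the argument does not go through.
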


\noindent To prove Theorem D, we use the equivalence between the containment of core partitions and domination in the strong Bruhat order, which was introduced in section 5.3 of Hanusa-Jones \cite{HanusaJones} in order to answer a question of Billey and Mitchell \cite{BilleyMitchell}.

\subsection{Applications and Directions for Future Work}\label{sec:futurework}

The authors strongly suspect that analogous results can be achieved in Lie types $B$ and $D$, and this paper provides a solid framework for making such generalizations, although no precise statements have yet been formulated.  One primary difference in other Lie types is that the core partitions of Hanusa-Jones have dynamic labelings, which makes the bijection on core partitions difficult to conjecture.  Therefore, in types $B$ and $D$ the geometry of the alcove walk model developed in this paper will be crucial, not only for proving results, but also for formulating conjectures.  Another additional difficulty with a direct generalization from type $C$ is that the domains for the projections provably do not lie on any of the root hyperplanes themselves.  This was particularly surprising for the authors to discover about type $B$, since combinatorially type $B$ is a subset of type $C$, and geometrically they are dual.  However, much of the groundwork for a geometric analysis of types $B$ and $D$ has been laid in Section \ref{sectionGeometric}.

There are also directions for future work on the combinatorics of many related partially ordered sets.  As Theorem D suggests, a great deal of combinatorial structure is preserved by the bijective projection developed in this paper.  In addition to strong Bruhat order, there are other natural partial orderings to consider on these parabolic quotients.  For example, it would be interesting to more explicitly describe which intervals comprise the domains and images of these bijections.  

Although the statements of many of the results of this paper are combinatorial in nature, both the motivation for the work and the most promising future directions are either algebro-geometric or representation-theoretic.  The homology of the affine Grassmannian has a basis of Schubert classes which are indexed by elements of the parabolic quotients studied in this paper.  The fact that the projection map preserves strong Bruhat order means that Schubert cells in one dimension map to Schubert cells one dimension lower.  Therefore, the bijective projections developed in \cite{BJV} and this paper may yield a means by which one can construct inductive proofs in affine Schubert calculus.  In \cite{RamAlcove}, Ram discusses how the root operators $\tilde{e_i}$ and $\tilde{f_i}$ coincide with the rank one crystal base operators after projection onto the line orthogonal to a hyperplane determined by the simple root $\alpha_i$.  It would be interesting to see if the projections defined in this paper can be similarly interpreted into the language of affine crystals.

The geometric results on alcove walks in Section \ref{sectionGeometric} in particular may have other potential applications in algebraic geometry and representation theory.  In the study of Shimura varieties with Iwahori level structure, Haines and Ng\^{o} develop the notion of an alcove walk in the $w$-direction \cite{HaiNgo}.  The alcove walk algebra, developed by Ram in \cite{RamAlcove} as a refinement of the Littelmann path model of \cite{Littel}, provides a combinatorial method for working with the affine Hecke algebra. This model had already been used by Schwer to provide a combinatorial description of the Hall-Littlewood polynomials \cite{Schwer}, and Ram and Yip further apply the alcove walk algebra to the theory of Macdonald polynomials in \cite{RamYip}.  Parkinson, Ram, and Schwer present a refinement of Ram's alcove walk model in order to study analogs of Mirkovi\'{c}-Vilonen cycles in the affine flag variety \cite{PRS}.  Essential to the work on alcove walks in these various contexts is the ability to additionally track information about the direction or orientation of various parts of the walk.  Potential applications of the work in this paper to Shimura varieties, Mirkovi\'{c}-Vilonen cycles, Macdonald polynomials, and the affine Hecke algebra therefore arise from the refined information obtained in Section \ref{sectionGeometric} on so-called perpendicular and parallel steps in alcove walks.

\subsection{Organization of the Paper}\label{sec:organization}

In Section \ref{sectionWeylGroups}, we provide a brief review of the language of root systems, Weyl groups and their parabolic quotients, and the affine hyperplane arrangement which gives rise to the alcove model for affine Weyl groups.  In Section \ref{sectionCombinatorialModels}, we review the combinatorial models for the minimal length coset representatives of the parabolic quotient $\widetilde{W}/W$ of the affine Weyl group by the finite Weyl group, including an overview of the relevant combinatorics in affine type $C$ from Hanusa-Jones \cite{HanusaJones}.  We summarize in Section \ref{sectionReview} the results obtained in \cite{BJV} by Berg, Jones, and Vazirani for the case of $W = S_n$.

The new results begin in Sections \ref{MapOnCores}-\ref{DomainCodomain}, where we develop the map $\Phi_n$ on the various combinatorial models for the type $C$ quotient, proving Theorem A in Section \ref{MapCoroot}.  Section~\ref{sectionGeometric} develops the geometry of $\Phi_n$ in terms of alcove walks in the affine hyperplane arrangement.  In particular, we extend the result obtained in Section~\ref{MapCoroot} which says that the domains of $\Phi_n$ may be partitioned into hyperplanes, and show that when we identify these hyperplanes with $\R^{n-1}$, then $\Phi_n$ may be regarded as a projection from alcoves of $\widetilde{C}_n/C_n$ in $\R^n$ to alcoves of $\widetilde{C}_{n-1}/C_{n-1}$ in $\R^{n-1}$.  This geometric interpretation provides a constructive proof for Theorem C.  In Section~\ref{sectionAction}, we provide an explicit algorithm that determines the action of $\Phi_n$ on reduced words of $\widetilde{C}_n/C_n$.  The geometry developed in Section~\ref{sectionGeometric} and the algorithm provided in Section~\ref{sectionAction} give two distinct proofs of Theorem B.  Finally, in Section~\ref{sectionProperties}, we prove Theorem D, showing that $\Phi_n$ preserves the strong Bruhat order on each of its hyperplane domains.

\subsection{Acknowledgements}

The results in this paper were obtained during the SMALL Research Experience for Undergraduates at Williams College in Summer 2012.  The authors wish to thank the National Science Foundation for its financial support and Williams College for its generous additional support and excellent working conditions.  The authors would also like to thank the anonymous referees for their detailed readings and many constructive suggestions which greatly improved the exposition of this paper.


\section{Weyl Groups, Root Systems, and Alcoves} \label{sectionWeylGroups}


We begin by establishing some notation and providing a brief review of the necessary background on Coxeter groups, Weyl groups, and their associated root systems.  We refer the reader to \cite{BjornerBrenti} and \cite{Humphreys} for more detail on any of the definitions and results stated in this section.

Let $(W,S)$ denote a Coxeter system with a set of generators $S$ for the Coxeter group $W$.  Since the generators $s \in S$ all have order 2, each $w \neq 1$ in $W$ can be written as $w = s_1s_2 \cdots s_r$ for some $s_i \in S$.  The minimal number of generators required to write $w$ as a product is called the length of $w$, which we denote by $\ell(w)$.  The Coxeter group $W$ is partially ordered by strong Bruhat order, which we denote by $\leq$. If the order of the product of any two generators in $S$ equals either $2, 3, 4,$ or $6$, then the corresponding Coxeter group $W$ is a \emph{Weyl group}.  The Weyl groups of interest in this paper are $A_n$ and $C_n$, as well as their affine analogs.  The Dynkin diagrams for types $A_n$ and $C_n$ are shown in Figure~\ref{DynkinDiagramsABCD}.

Let $\Phi$ denote the root system for the Weyl group $W$, which has a basis of simple roots $\Delta$.  We remark that in all cases of interest in this paper, the root system $\Phi$ is irreducible.   An element of $\Phi$ which is a non-negative integral combination of simple roots is positive, and $\Phi$ is the disjoint union of positive and negative roots $\Phi = \Phi^+ \cup \Phi^-$.   Since $\Phi$ is irreducible, there is a unique highest root $\tilde{\alpha}$ which satisfies that $\tilde{\alpha} - \alpha$ is a sum of simple roots for all $\alpha \in \Phi^+$.  For example, if $\varepsilon_1, \ldots, \varepsilon_n$ denotes the standard basis of $\mathbb{R}^n$, then the positive roots of the Weyl group $C_n$ are $2 \varepsilon_i$ for $1 \leq i \leq n$, and $\varepsilon_i \pm \varepsilon_j$ for $1 \leq i < j \leq n$. The basis of simple roots $\Delta$ consists of $\alpha_1  = \varepsilon_1 - \varepsilon_2, \alpha_2 = \varepsilon_2 - \varepsilon_3, \ldots, \alpha_{n-1} = \varepsilon_{n-1} - \varepsilon_n$, and $\alpha_n = 2 \varepsilon_n$, and the unique long root is $\widetilde{\alpha} = 2 \varepsilon_1$. 

For any $\alpha \in \Phi$, denote by $s_{\alpha}$  the reflection across the hyperplane perpendicular to $\alpha$ that passes through the origin.  The Weyl group $W$ is then generated by the reflections $s_{\alpha}$ for $\alpha \in \Phi$, and so we may view $W$ as a reflection group acting on the Euclidean space $V = \R^n$.  Conversely, given a generator $s \in S$, by $\alpha_s$ we mean the positive simple root normal to the hyperplane through which $s$ reflects.

Denote by $\Phi^{\vee} := \{ \alpha^{\vee} \, | \, \alpha \in \Phi\}$ the dual root system, where we define the coroots as $\alpha^{\vee} := 2 \alpha / \langle \alpha, \alpha \rangle$.  The associated simple coroots are then given by $\Delta^{\vee} := \{ \alpha^{\vee} \, | \, \alpha \in \Delta\}$.  Denote by $\Lambda_R$ the root lattice, which is the $\mathbb{Z}$-span $\Lambda_R$ of $\Phi$ in $V$.  Similarly, denote the coroot lattice by $\Lambda_R^{\vee}$.  



The \emph{affine group} $\Aff(V)$ is the group consisting of all affine reflections across all hyperplanes in $V$.  It is shown in Section 4.1 of \cite{Humphreys} that $\Aff(V)$ is the semidirect product of $\GL(V)$ and the group of translations of elements of $V$.  For each root $\alpha \in \Phi$ and each integer $k$, consider the affine hyperplane 
$$H_{\alpha, k} : = \{ \lambda \in V \, | \, ( \lambda, \alpha ) = k \}. $$
The corresponding affine reflection across this hyperplane is given by
$$s_{\alpha, k} (\lambda):= \lambda - ( ( \lambda, \alpha ) - k ) \alpha^{\vee}. $$
Note that $H_{\alpha, k} = H_{- \alpha, -k}$ and that $s_{\alpha, 0} = s_{\alpha}$.  We occasionally abbreviate $H_{\alpha, 0} = H_{\alpha}$.  Let $\mathcal{H}$ be the collection of all hyperplanes $$\mathcal{H} := \{ H_{\alpha, k} \mid \alpha \in \Phi, k \in \mathbb{Z}\}.$$ The elements of $\mathcal{H}$ are permuted naturally by elements of $W$ and translations in $\Aff(V)$ by elements of the coroot lattice $\Lambda^{\vee}_R$.

Define the \emph{affine Weyl group} $\widetilde{W}$ to be the subgroup of $\Aff(V)$ generated by all affine reflections $s_{\alpha, k}$, where $\alpha \in \Phi$ and $k \in \mathbb{Z}$.  It is well-known that $\widetilde{W}$ is the semidirect product of $W$ and the translation group corresponding to the coroot lattice $\Lambda_R^{\vee}$; see Proposition 4.2 in \cite{Humphreys}.  Since the elements of $\widetilde{W}$ permute the hyperplanes in $\mathcal{H}$, they permute the collection $\mathfrak{A}$ of connected components of $V^{\circ} := V \backslash \cup_{H \in \mathcal{H}} H$.  Each element of $\mathfrak{A}$ is called an \emph{alcove}.  The \emph{fundamental alcove} $\mathcal{A}_{\circ}$ is the alcove $$\mathcal{A}_{\circ} = \{\lambda \in V \, | \, 0 < (\lambda, \alpha) < 1 \text{ for all } \alpha \in \Phi^+ \}.$$
In fact, any alcove $\mathcal{A} \in \mathfrak{A}$ consists of all $\lambda \in V$ satisfying the strict inequalities $k_{\alpha} < (\lambda, \alpha) < k_{\alpha} + 1$, where $\alpha$ runs through $\Phi^+$ and $k_{\alpha} \in \mathbb{Z}$ is some fixed integer depending on $\alpha$.   The \emph{walls} of $\mathcal{A}_{\circ}$ are the hyperplanes $H_{\alpha}$ for $\alpha \in \Delta$, together with $H_{\tilde{\alpha}, 1}$.

Define $\widetilde{S}$ to be the set of reflections
$$\widetilde{S} := \{ s_{\alpha} \mid \alpha \in \Delta \} \cup \{s_{\tilde{\alpha}, 1}\}. $$  We will often relabel $s_0 = s_{\tilde{\alpha}, 1}$ for convenience.   The pair $(\widetilde{W}, \widetilde{S})$ is a Coxeter system (see Proposition 4.3 in \cite{Humphreys}), and the Dynkin diagram for $\widetilde{W}$ in type $C$ appears in Figure~\ref{DynkinDiagramsABCD}.  In order to distinguish among the different Lie types, we will use the notation $\widetilde{A}_n$ and $\widetilde{C}_n$ to denote the affine Weyl groups of rank $n$ in Lie types $A$ and $C$, respectively, and we will occasionally denote by $A_n$ and $C_n$ the corresponding finite Weyl groups.

 \begin{figure}[htp]
\begin{center}
\subfigure[$A_n$ ($n \geq 1$)]{\includegraphics[scale = 0.5]{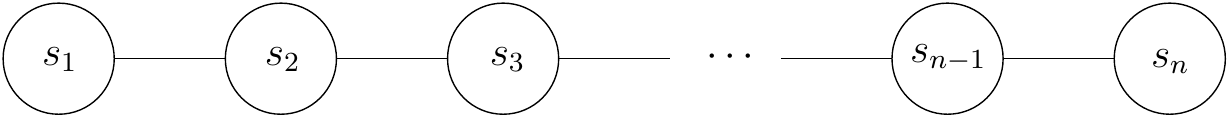}} \hspace{0.5in}
\subfigure[$C_n$ ($n \geq 2$)]{\includegraphics[scale = 0.5]{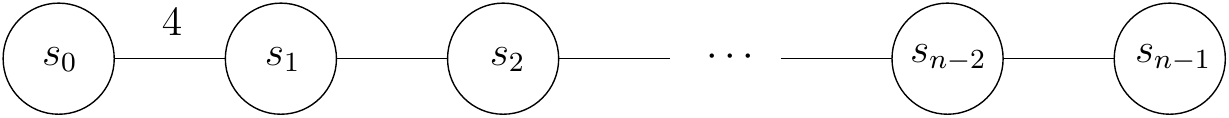}} \hspace{0.5in}
\subfigure[$\widetilde{C}_n$ ($n \geq 2$)]{\includegraphics[scale = 0.5]{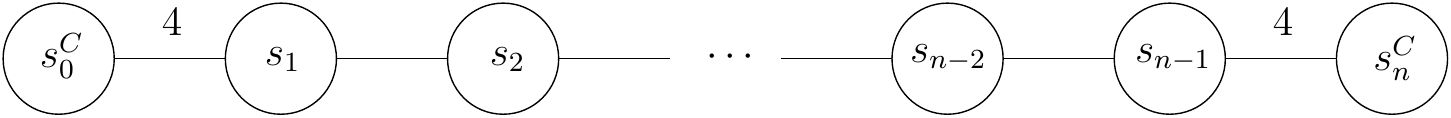}} \hspace{0.5in}
\caption{Dynkin diagrams for the Weyl groups $A_n, C_n,$ and $\widetilde{C}_n$. }
\label{DynkinDiagramsABCD}
\end{center}
\end{figure} 

 The affine Weyl group $\widetilde{W}$ acts transitively on $\mathfrak{A}$; we will consider the action by left multiplication in this paper.  More specifically, for an alcove $\mathcal{A} \in \mathfrak{A}$, the action $s\mathcal{A}$ applies the reflection corresponding to the generator $s$ to the alcove $\mathcal{A}$.  If $w= s_{i_1}s_{i_2} \cdots s_{i_k}$, then $w\mathcal{A}$ is the alcove obtained by applying each of the reflections $s_{i_k}, \dots, s_{i_1}$ to the alcove $\mathcal{A}$ one after the other.  We will often identify the element $w \in \widetilde{W}$ with the alcove $w\mathcal{A}_{\circ} \in \mathfrak{A}$.  The alcoves which correspond to elements in the finite Weyl group $W$ are precisely those adjacent to the origin.  The union of these alcoves is called the \emph{fundamental region}.

\begin{figure}[htp]
\begin{center}
\subfigure[$A_{\circ}$]{\includegraphics[scale = 0.5]{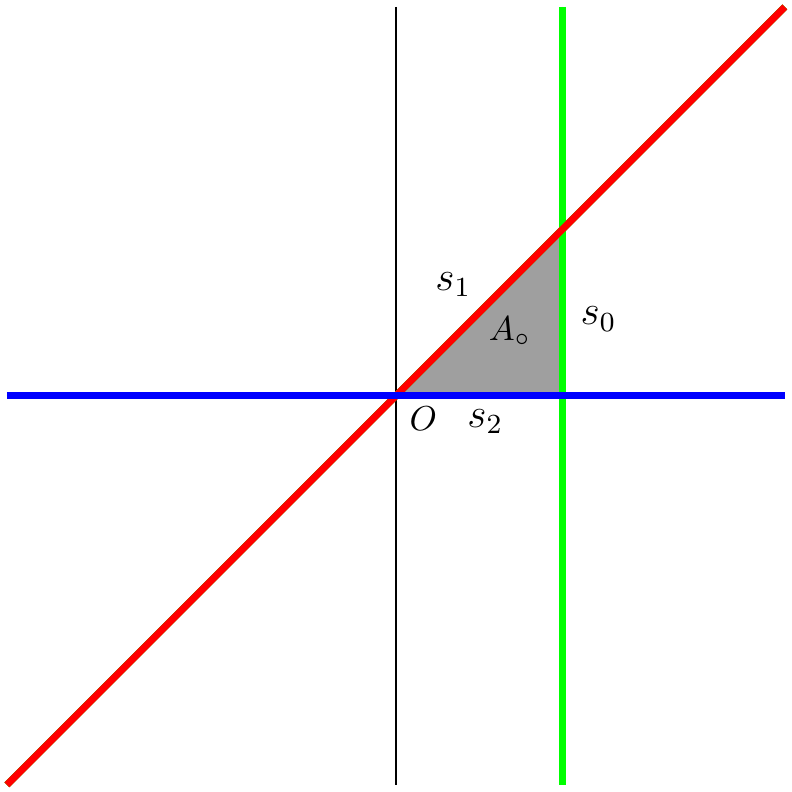}} \hspace{0.3in}
\subfigure[$s_0A_{\circ}$]{\includegraphics[scale = 0.5]{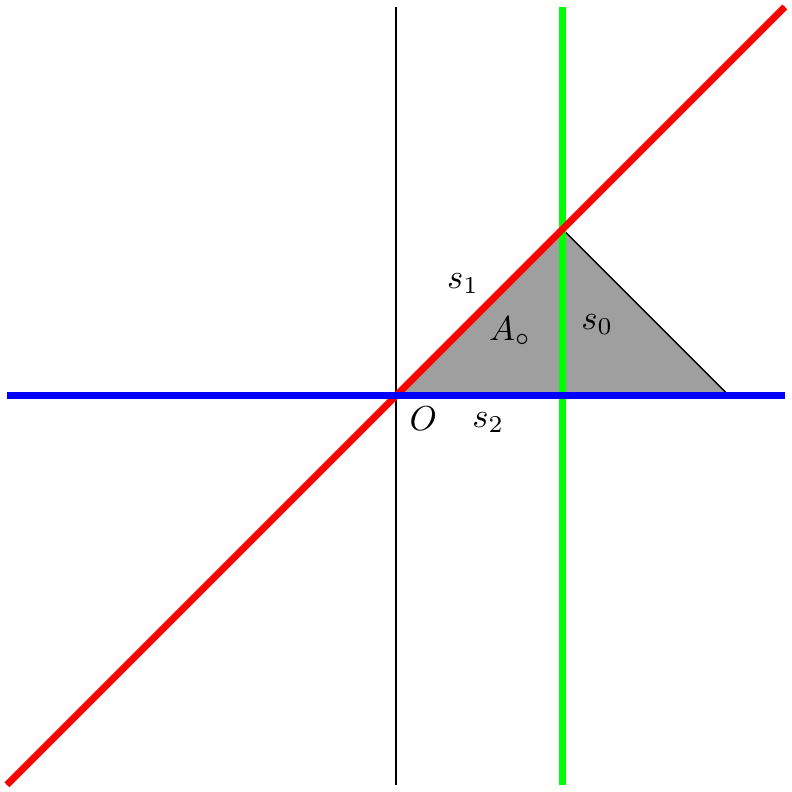}} \hspace{0.3in}
\subfigure[$s_1s_0A_{\circ}$]{\includegraphics[scale = 0.5]{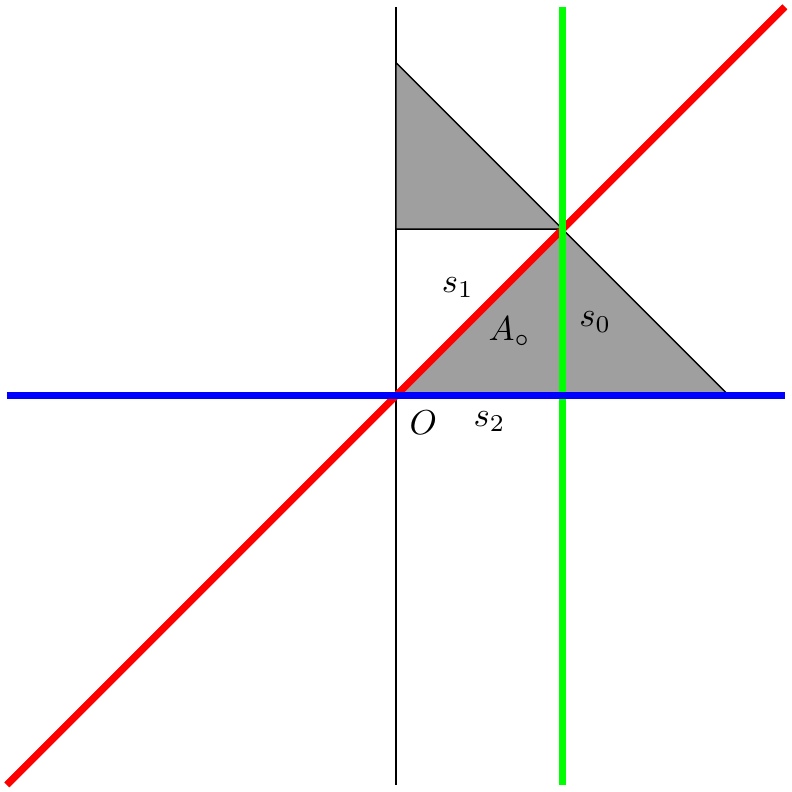}} \hspace{0.3in}
\subfigure[$s_2s_1s_0A_{\circ}$]{\includegraphics[scale = 0.5]{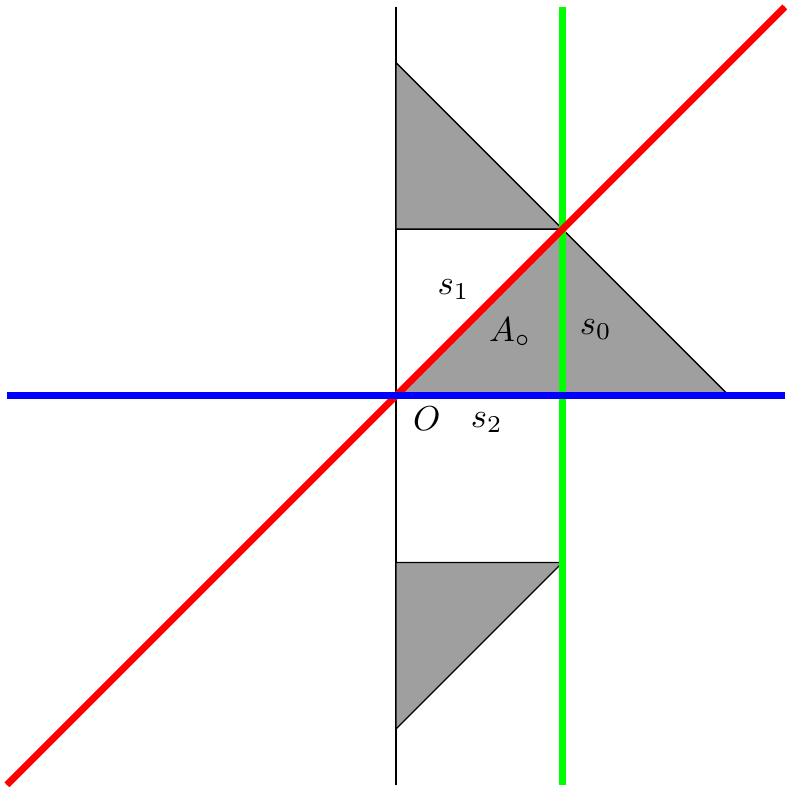}} \hspace{0.6in}
\subfigure[All alcoves]{\includegraphics[scale = 0.5]{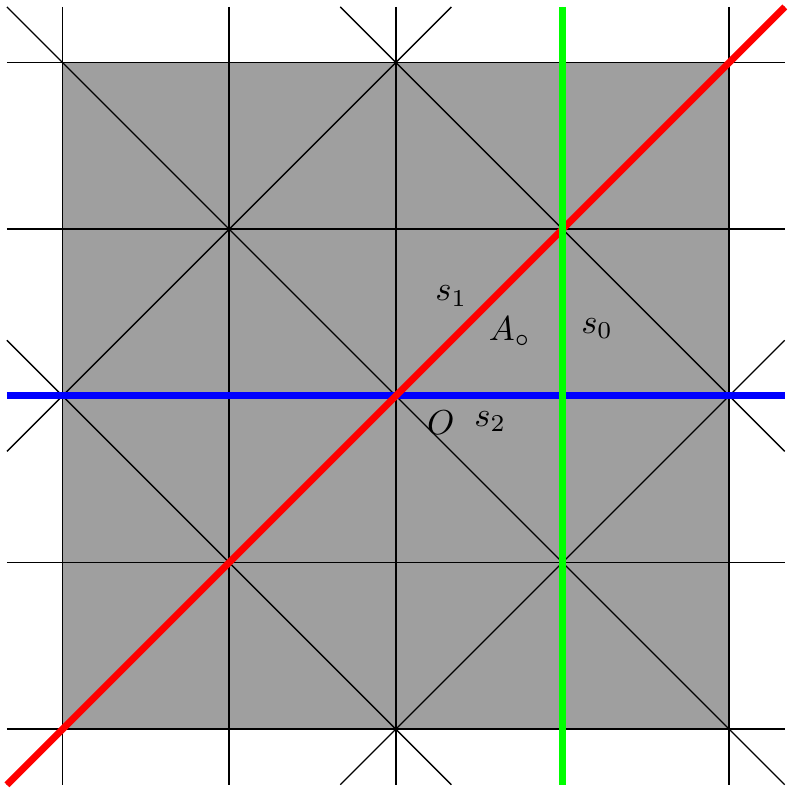}}
\caption{Elements of $\widetilde{C}_2$ permute $A_{\circ}$ transitively. }
\label{fig:A0transitive}
\end{center}
\end{figure}

\begin{exam} \rm
The Weyl group $C_2$ has the presentation 
$$C_2 = \langle s_1, s_2 \mid (s_1s_2)^4 = 1 \rangle, $$
and the affine Weyl group $\widetilde{C}_2$ has the presentation 
\begin{eqnarray}\label{relationsC2}
\widetilde{C}_2 &=& \langle s_0, s_1, s_2 \mid (s_0s_1)^4 = (s_0s_2)^2 = (s_1s_2)^4 = 1 \rangle.
\end{eqnarray}
In Figure~\ref{fig:A0transitive}, the walls of the fundamental alcove $\mathcal{A}_{\circ}$ are labeled with the generators of $\widetilde{C}_2$.  The sequence of pictures demonstrates how the reflections generated by $s_0, s_1, s_2$ permute $\mathcal{A}_{\circ}$ transitively, in addition to the correspondence between words in $\widetilde{W}$ and alcoves in $\R^2$.  
\end{exam}

Given a hyperplane $H = H_{\alpha, k} \in \mathcal{H}$, each alcove lies in one of the two half-spaces defined by $H$.  We say that $H$ \emph{separates} two alcoves $\mathcal{A}$ and $\mathcal{A}'$ if these alcoves lie in different half-spaces relative to $H$.  For example, the hyperplane $H_{\alpha_s,0}$ separates $\mathcal{A}_{\circ}$ and $s\mathcal{A}_{\circ}$ for all $s \in \widetilde{S}$.  Given any alcove $w\mathcal{A}_{\circ} \in \mathfrak{A}$, it is well-known that the length $\ell(w)$ equals the total number of hyperplanes which separate $\mathcal{A}_{\circ}$ and $w\mathcal{A}_{\circ}$; see Section 4.5 in \cite{Humphreys}.

\begin{df}
An \emph{alcove walk} from $\mathcal{A}_{\circ}$ to $w\mathcal{A}_{\circ}$ is a path connecting a point in the interior of $\mathcal{A}_{\circ}$ to a point in the interior of $w\mathcal{A}_{\circ}$, with another condition that the path cannot pass through a vertex of any alcove.  A \textit{step} in an alcove walk is the result of applying a reflection to an alcove across a hyperplane which bounds the alcove. If we consider all such paths from $\mathcal{A}_{\circ}$ to $w\mathcal{A}_{\circ}$, then the minimum number of hyperplanes in $\mathcal{H}$ that such path intersects equals $\ell(w)$.  An alcove walk which crosses the minimum number of hyperplanes is called a \emph{minimal (length) alcove walk} from $\mathcal{A}_{\circ}$ to $w\mathcal{A}_{\circ}$; note that it is not necessarily unique, just as reduced expressions for $w$ are not unique. 
\end{df}

It is a general fact (see \cite{RamAlcove}, for example) that we may label the hyperplanes in $\mathcal{H}$ with generators in $\widetilde{S}$ such that if an alcove walk from $\mathcal{A}_{\circ}$ to $\mathcal{A}' \in \mathfrak{A}$ passes through the hyperplanes labeled $s_{i_1}, s_{i_2}, \ldots, s_{i_k}$, in that order, then the alcove $\mathcal{A}'$ is the alcove $w\mathcal{A}_{\circ}$, where $w= s_{i_1}s_{i_2} \cdots s_{i_k}$.  

\begin{figure}[htp]
\begin{center}
\subfigure[$s_2s_1s_2s_0s_1s_0$]{\includegraphics[scale = 0.6]{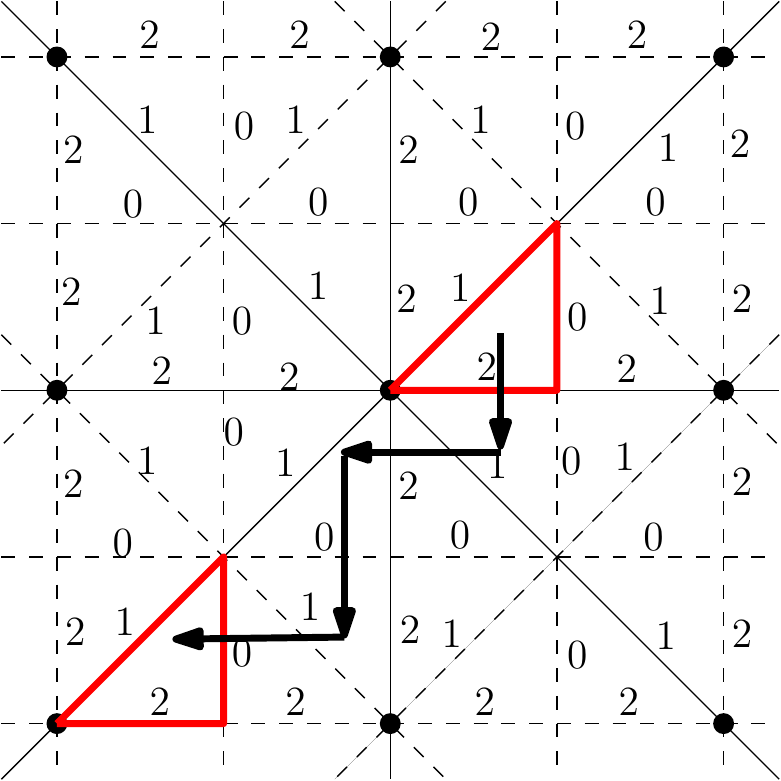}} \hspace{0.5in}
\subfigure[$s_1s_2s_0s_1s_0s_1s_0s_1s_2s_1s_0s_1$]{\includegraphics[scale = 0.6]{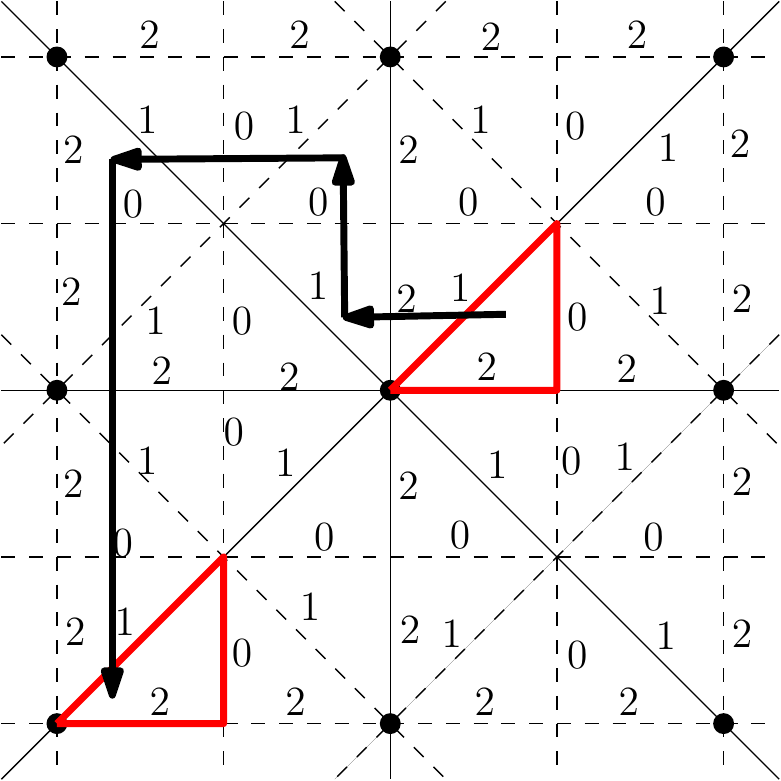}} 
\caption{Examples of alcove walks. }
\label{fig:minAlcoveWalk}
\end{center}
\end{figure}

\begin{exam} \rm
Consider the affine Weyl group $\widetilde{C}_2$. The fundamental alcove $\mathcal{A}_{\circ}$ is highlighted in red in the center, and each of the three walls of $\mathcal{A}_{\circ}$ are labeled by one of the three generators in $\widetilde{S}$, according to whether the neighboring alcove is $s_0\mathcal{A}_{\circ}, s_1\mathcal{A}_{\circ},$ or $s_2\mathcal{A}_{\circ}$, as in Figure~\ref{fig:A0transitive}; here we record only the subscripts for brevity.  Around the origin is a copy of finite $C_2$, so every wall of an alcove in this fundamental region is labeled by one of the two generators in $S$, with the third side labeled by the affine reflection $s_0$.  Continuing this pattern, each of the three walls of every alcove are labeled with a distinct generator.

Figure~\ref{fig:minAlcoveWalk} shows two alcove walks beginning at $\mathcal{A}_{\circ}$ and ending at the same alcove.  The first alcove walk is a minimal alcove walk, and the word corresponding to it is $w = s_2s_1s_2s_0s_1s_0$.  The second alcove walk is not a minimal alcove walk.  The word corresponding to the second alcove walk is $w' = s_1s_2s_0s_1s_0s_1s_0s_1s_2s_1s_0s_1$.  The two words $w$ and $w'$ are equivalent by using the defining relations of $\widetilde{C}_2$ from~\eqref{relationsC2}, as the following calculation demonstrates:
\begin{eqnarray}
s_1s_2\underline{s_0s_1s_0s_1s_0s_1}s_2s_1s_0s_1 &=& s_1s_2s_1\underline{s_0s_2}s_1s_0s_1  \nonumber \\  
&=& \underline{s_1s_2s_1s_2}s_0s_1s_0s_1 \nonumber \\ 
&=& s_2s_1s_2s_1 \underline{s_0s_1s_0s_1} \nonumber \\ 
&=& s_2s_1s_2 \underline{s_1 s_1}s_0s_1s_0 \nonumber \\ 
&=& s_2s_1s_2s_0s_1s_0. \nonumber
\end{eqnarray}
\end{exam}

\section{Combinatorial Models For the Parabolic Quotient $\widetilde{W}/W$}\label{sectionCombinatorialModels}

Having established the geometric interpretation of the affine Weyl group $\widetilde{W}$ in terms of alcoves, we move on to consider the parabolic quotient $\widetilde{W}/W$, the object of our interest in this paper.  It is a well-known fact that each coset in $\widetilde{W}/W$ has a unique minimal length representative element; see Section 2.4 in \cite{BjornerBrenti}.  To index cosets in the quotient, we consider the minimal length element in each coset.  In this section, we introduce three closely related combinatorial models that index minimal length coset representatives of the parabolic quotient.  They are the root lattice point model, abacus diagrams, and core partitions.  

\subsection{Root Lattice Point Model}\label{rootlatticeptmodel}

To start, recall that $\widetilde{W}$ is the semi-direct product of $W$ and the coroot lattice points $\Lambda_R^{\vee}$; that is, $\widetilde{W}=  \Lambda_R^{\vee} \rtimes W$.  We may also identify the Euclidean space $V = \R^n$ with $ \Lambda_R^{\vee} \otimes_{\Z} \R$.  Given an element $w \in \widetilde{W}$, we may associate a coroot lattice point to $w$ by acting on the origin $0 \in V$ by $w$.  Since elements in the finite Weyl group $W$ leave $0 \in V$ unchanged, two elements in the same coset of $\widetilde{W}/W$ send $0 \in V$ to the same coroot lattice point.  Hence, there is a correspondence between coroot lattice points and cosets of $\widetilde{W}/W$.

\begin{rem}\label{rem:coroot}
It should be pointed out that the authors of \cite{BJV} and \cite{HanusaJones} have used the term ``root lattice point'' where we instead say ``coroot lattice point."  This difference in terminology arises because in type $A$, which was studied by the authors of \cite{BJV}, the root and coroot lattices coincide.  In type $C$, however, we must use the coroot lattice specifically, on which there is a well-defined action of $\widetilde{C}_n$.  To preserve the terminology in the existing literature, the authors have decided to refer to this model as the ``root lattice point model."
\end{rem}

\begin{figure}[hpt]
\begin{center}
{\includegraphics[scale = 0.75]{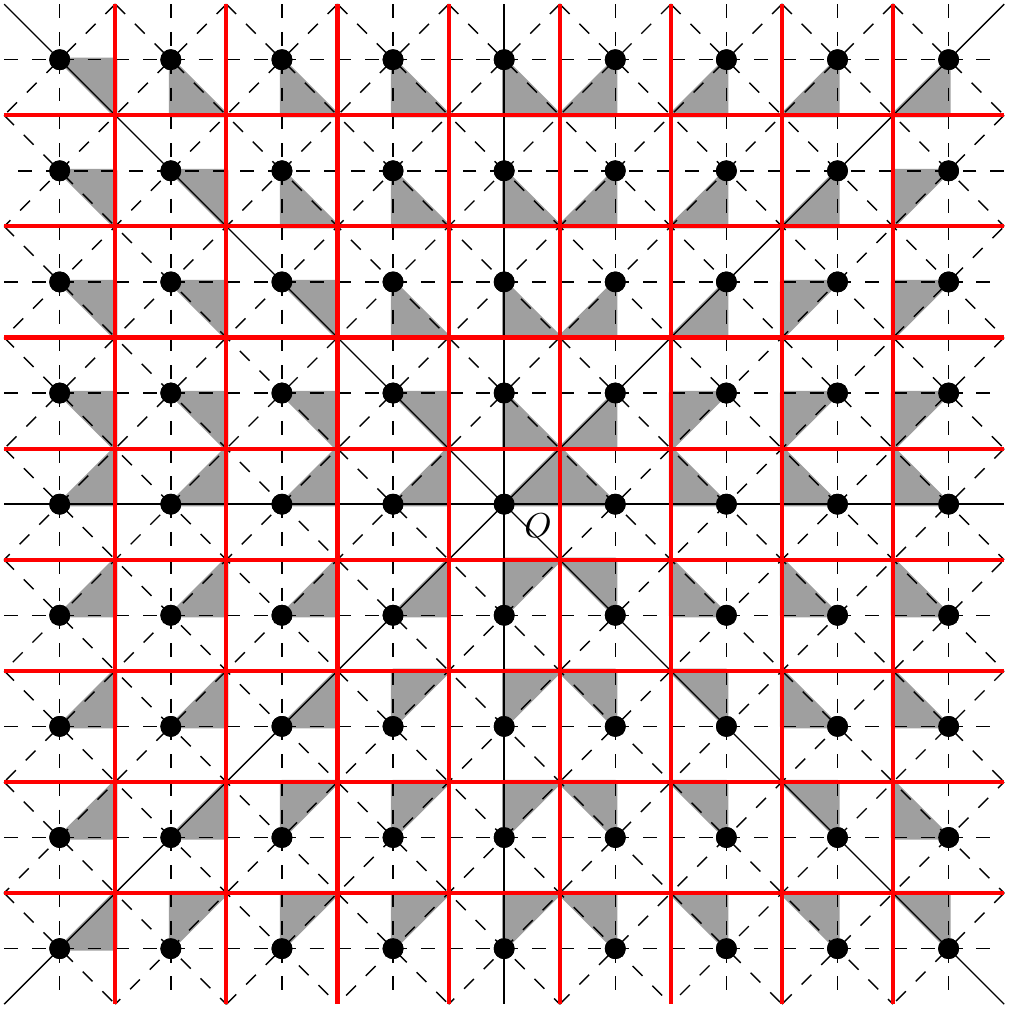}}
\caption{Minimal length coset representatives for $\widetilde{C}_2/C_2$.   }
\label{fig:minCosetRep}
\end{center}
\end{figure}

Geometrically, if $\lambda^{\vee}$ is the coroot lattice point corresponding to a coset in $\widetilde{W}/W$, then the union of the alcoves that represent elements in that coset is a translation of the fundamental region by the coroot $\lambda^{\vee}$.  The minimal length coset representative corresponding to a coroot lattice point is the alcove which has the minimal number of hyperplanes separating it from the fundamental alcove $\mathcal{A}_{\circ}$.  
\begin{df}
A \emph{distinguished alcove} is an alcove which is of minimal length in its coset in $\widetilde{W}/W$.  
\end{df}
Figure~\ref{fig:minCosetRep} shows the distinguished alcoves, or minimal length coset representatives, for $\widetilde{C}_2/C_2$ shaded in gray.  The coroot lattice is highlighted with bold black dots, and each translate of the fundamental region by a coroot is outlined in solid gray.

\begin{df}\label{shiftedWeyl}
The complement of the collection of the hyperplanes $\{ H_{\alpha} \mid \alpha \in \Phi^+\}$ partitions $\R^n$ into \emph{Weyl chambers}, which are highlighted in red in Figure~\ref{fig:minCosetRep}.  Now consider the complement of the collection $\{ H_{\alpha, 1} \mid \alpha \in \Phi^+\}$, shown in blue in Figure~\ref{fig:minCosetRep}.  Note that these hyperplanes do not all meet in a single point, but that only $|W|$ of these regions actually contain any coroot lattice points.  We refer to these $|W|$ regions as the \emph{shifted Weyl chambers}.
\end{df}
The key observation about these shifted Weyl chambers is that, within each shifted Weyl chamber, all distinguished alcoves are simply translates by an element of the coroot lattice of the same alcove in the fundamental region.  We shall exploit this symmetry later in Section \ref{sectionGeometric}.

\subsection{Abacus Diagrams}\label{abacusmodel}

The development of abacus diagrams typically requires the concept of mirrored $\mathbb{Z}$-permutations.  We provide only a brief discussion of mirrored $\Z$-permutations in order to move directly to abacus diagrams; the interested reader is referred to Sections 2 and 3 in \cite{HanusaJones} for more details.

\begin{df}\label{mirroredPermutation} \rm (Definition 2.1 in \cite{HanusaJones})
Fix a positive integer $n$ and let $N = 2n+1$.  A bijection $w: \mathbb{Z} \to \mathbb{Z}$ is a \emph{mirrored $\mathbb{Z}$-permutation} if $w(i+N) = w(i) + N$ and $w(-i) = -w(i)$ for all $i \in \mathbb{Z}$.  
\end{df}

It is easy to see from the equations in Definition~\ref{mirroredPermutation} that a mirrored $\mathbb{Z}$-permutation $w$ is completely determined by its action on $\{ 1, 2, \ldots, n \}$, and that $w(i) = i$ for all $i \equiv 0 \pmod{N}$.  Given a mirrored $\mathbb{Z}$-permutation $w$, the ordered sequence $[w(1), w(2), \ldots, w(2n)]$ is called the \emph{window} of $w$.  The windows for the mirrored $\mathbb{Z}$-permutations corresponding to the Coxeter generators of $\widetilde{C}_n$ are shown below: 
\begin{eqnarray}
s_i &=& [1, 2, \ldots, i-1, i+1, i, i+2, \ldots, n] \text{ for } 1 \leq i \leq n-1,  \nonumber \\ 
s_0^C &=& [-1, 2, 3, \ldots, n],  \nonumber \\
s_n^C &=& [1, 2, \ldots, n-1, n+1],  \nonumber
\end{eqnarray}
The minimal length coset representatives in $\widetilde{C}_n/C_n$ satisfy $w(1) < w(2) < \cdots < w(n) < w(n+1)$.  Abacus diagrams combinatorialize the integers occurring in the base window of a mirrored $\mathbb{Z}$-permutation.

 

\begin{df} \rm (Definition 3.1 in \cite{HanusaJones})
An \emph{abacus diagram} is a diagram containing $2n$ columns labeled $1, 2, \ldots, 2n$, called \emph{runners}.  Runner $i$ contains entries labeled by the integers $mN + i$ for each level $m$ where $- \infty < m < \infty$. An example of an abacus diagram is shown in Figure~\ref{fig:exampleAbacus}.

An abacus diagram is drawn as follows: 
\begin{enumerate}[label = (\roman{*})]
\item Each runner is vertical, with $-\infty$ at the top and $\infty$ at the bottom.  The runners increase from runner 1 in the leftmost position to runner $2n$ in the rightmost position. 
\item Entries in the abacus diagram may be circled.  The circled entries are called \emph{beads}, and the non-circled entries are called \emph{gaps}.  The entries are linearly ordered by the labels $mN + i$, where $m \in \mathbb{Z}$ is the level and $1 \leq i \leq 2n$ is the runner number.  This linear ordering is called the \emph{reading order}.  
\item A bead $b$ is \emph{active} if there exist gaps that occur prior to $b$ in the reading order.  Otherwise, the bead $b$ is \emph{inactive}.  A runner is called \emph{flush} if no bead on the runner is preceded in reading order by a gap in the same runner.  An abacus is flush if every runner is flush.  
\item An abacus is \emph{balanced} if there is at least one bead on every runner, and the sum of labels of the lowest bead on runners $i$ and $N - i$ is $N$ for all $i = 1, 2, \ldots, 2n$ (equivalently, the sum of the highest levels that contain a bead for runners $i$ and $N-i$ is 0). 
\item An abacus is \emph{even} if there is an even number of gaps preceding $N$ in the reading order.
\end{enumerate}
A runner containing a bead at the highest level is called the \emph{largest runner}, and a runner whose lowest bead occurs at the lowest level among all runners is called the \emph{smallest runner}.  Note that largest and smallest runners may not be unique.   Denote by $\mathscr{A}_{2n}$ the set of all balanced flush abaci on $2n$ runners.
\end{df}

\begin{df}
Given a mirrored $\mathbb{Z}$-permutation $w$, define $\mathfrak{a}(w)$ to be the balanced flush abacus whose lowest bead in each runner is an element of $\{ w(1), w(2), \ldots, w(2n) \}$.
\end{df}

\begin{figure}[htp]
\begin{center}
\includegraphics[scale = 0.6]{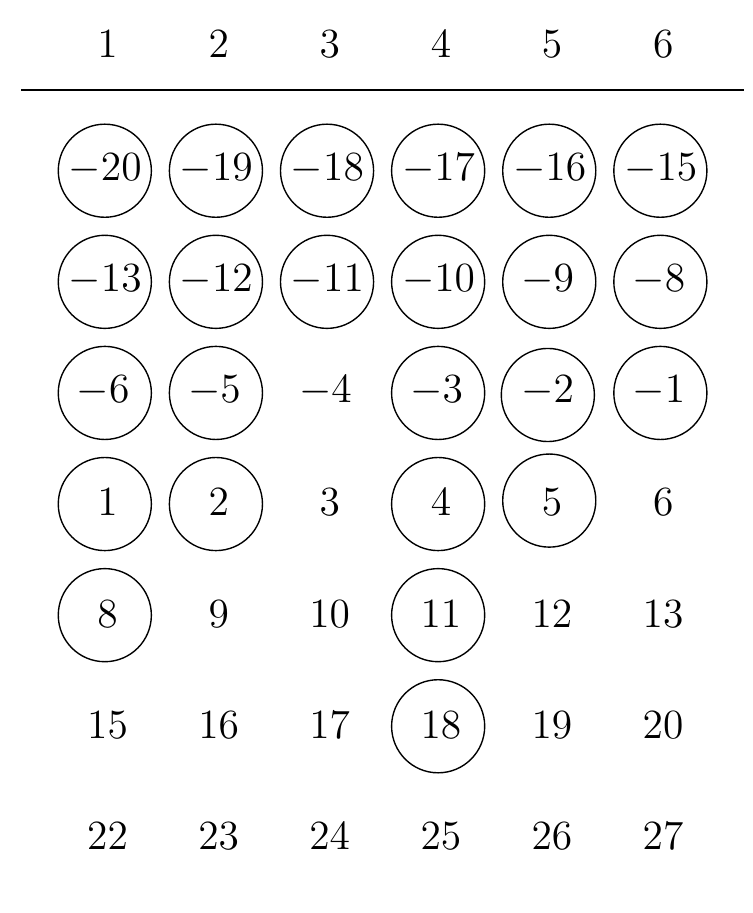}
\caption{The balanced flush abacus $\mathfrak{a}(w)$ corresponding to the mirrored $\mathbb{Z}$-permutation $w = [-11, -1, 2, 5, 8, 18]$. }
\label{fig:exampleAbacus}
\end{center}
\end{figure}

It has been shown (Lemma 3.6 in \cite{HanusaJones}) that the map $\mathfrak{a}: \widetilde{C}_n/C_n \longrightarrow \mathscr{A}_{2n}$ by $w \mapsto \mathfrak{a}(w)$ is a bijection.  From now on, we will assume all abacus diagrams to be balanced flush abacus diagrams unless otherwise noted. When we translate the action of the Coxeter generators on the mirrored $\mathbb{Z}$-permutations through the bijection $\mathfrak{a}$ to abacus diagrams, we get an action of the Coxeter generators on abacus diagrams.  They are described in Section 3.2 of \cite{HanusaJones}.  Since we are going to use these actions later in our proofs, we summarize them again here:  
\begin{enumerate}[label = (\roman{*})]
\item $s_i$ interchanges column $i$ with column $i+1$ and interchanges column $2n - i$ with column $2n - i + 1$, for $1 \leq i \leq n - 1$.  
\item $s_0^C$ interchanges column 1 and $2n$, and then shifts the lowest bead on column 1 down one level towards $\infty$, and shifts the lowest bead on column $2n$ up one level towards $- \infty$. 
\item $s_n^C$ interchanges column $n$ with column $n + 1$. 
\end{enumerate}

The following theorem shows that there is a bijection between coroot lattice points and abacus diagrams for $\widetilde{W}/W$. 
\begin{thm} [Theorem 4.1 in \cite{HanusaJones}] \label{T:corootcoords}
The coroot lattice point for an element $w \in \widetilde{W}/W$ is 
$$\sum_{i = 1}^n \operatorname{level}_i(\mathfrak{a}(w)) \varepsilon_i, $$
where $\operatorname{level}_i (\mathfrak{a}(w))$ is the level of the lowest bead in column $i$ of the abacus $\mathfrak{a}(w)$.  
\end{thm}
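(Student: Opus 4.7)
The plan is to prove Theorem~\ref{T:corootcoords} by induction on the Coxeter length $\ell(w)$. I would first observe that both sides of the identity depend only on the coset $wC_n$: the lattice point $w \cdot 0$ is unchanged under right multiplication by $C_n$ since every $v \in C_n$ fixes the origin, and by Lemma~3.6 of \cite{HanusaJones} the map $\mathfrak{a}$ factors through the coset. So the claim may be verified on any convenient representative.

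For the base case $w = e$, I would note that the identity mirrored $\mathbb{Z}$-permutation has lowest beads at level $0$ on every runner, giving $\sum_{i=1}^n \operatorname{level}_i(\mathfrak{a}(e))\varepsilon_i = 0 = e \cdot 0$. For the inductive step, assuming the formula holds for elements of length less than $\ell(w)$, I would write $w = sw'$ with $s \in \widetilde{S}$ and $\ell(w') = \ell(w) - 1$. Then $w \cdot 0 = s\bigl(\sum_i \operatorname{level}_i(\mathfrak{a}(w')) \varepsilon_i\bigr)$, and the goal reduces to verifying that the action of $s$ on abacus diagrams from Section~3.2 of \cite{HanusaJones} transforms the levels of the lowest beads in the first $n$ columns by exactly the same affine transformation by which $s$ acts on coroot coordinates.

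I would verify this case by case. For $s_i$ with $1 \leq i \leq n-1$, interchanging columns $i$ and $i+1$ swaps $\operatorname{level}_i$ and $\operatorname{level}_{i+1}$, matching $\varepsilon_i \leftrightarrow \varepsilon_{i+1}$. For $s_n^C$, interchanging columns $n$ and $n+1$ combined with the balanced condition (which forces $\operatorname{level}_{n+1}(\mathfrak{a}(w')) = -\operatorname{level}_n(\mathfrak{a}(w'))$) sends $\operatorname{level}_n$ to its negative, matching $\varepsilon_n \mapsto -\varepsilon_n$. For $s_0^C$, the interchange of columns $1$ and $2n$ together with the two $\pm 1$ level shifts and the balanced condition should give $\operatorname{level}_1 \mapsto 1 - \operatorname{level}_1$, matching the affine action $\lambda_1 \mapsto 1 - \lambda_1$ of $s_0 = s_{2\varepsilon_1,1}$.

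The main obstacle will be the $s_0^C$ case: unlike the finite generators, this is the unique affine generator and combines a column swap with nontrivial level shifts, so a careful bookkeeping argument using the balanced condition is needed to confirm that the composite action on $\operatorname{level}_1$ produces the correct affine transformation $a \mapsto 1 - a$ rather than a purely linear one. Once this case is in hand, the induction closes and the formula holds throughout $\widetilde{C}_n/C_n$.
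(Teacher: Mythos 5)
The paper does not prove this statement at all: it is imported verbatim as Theorem 4.1 of \cite{HanusaJones}, so there is no in-paper argument to compare yours against. Judged on its own, your proposal is a correct and essentially complete route to the result. The reduction to cosets is legitimate (right multiplication by $C_n$ fixes the origin and, since the generators $s_1,\dots,s_n$ only permute and negate window entries, preserves the bead set defining $\mathfrak{a}(w)$), the base case is trivial, and the generator-by-generator verification works out: for $s_i$ with $1\le i\le n-1$ the column swaps $i\leftrightarrow i+1$ and $2n-i\leftrightarrow 2n-i+1$ translate, via $\operatorname{level}_{n+j}=-\operatorname{level}_{n+1-j}$, into the single coordinate swap effected by $s_{\varepsilon_i-\varepsilon_{i+1}}$; for $s_n^C$ the balanced condition pairs runners $n$ and $N-n=n+1$, so the swap negates $\operatorname{level}_n$ as $s_{2\varepsilon_n}$ requires; and for $s_0^C$ the composite of the swap with the $\pm1$ level shifts gives $\operatorname{level}_1\mapsto -\operatorname{level}_1+1$, which matches $s_{2\varepsilon_1,1}(\lambda)=\lambda-(2\lambda_1-1)\varepsilon_1$ since $(2\varepsilon_1)^\vee=\varepsilon_1$. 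The only points worth making explicit in a write-up are (i) that the abacus action recorded in Section 3.2 is the \emph{left} action of the generators (otherwise the factorization $w=sw'$ is on the wrong side), and (ii) that when $w'=sw$ fails to be the minimal representative of its coset, the inductive hypothesis still applies because both sides of the identity are coset invariants --- which your opening paragraph already covers.
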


\subsection{Core Partitions}\label{sec:corepartitions}

Let $\lambda = (\lambda_1, \ldots, \lambda_r)$ be a partition.  The \emph{Young diagram} of $\lambda$ is a collection of left-justified boxes for which the number of boxes weakly decreases from $\lambda_1$ to $\lambda_r$ as one moves down the rows.  We will use $(i, j)$ to denote the box located at the $i^{\text{th}}$ row and the $j^{\text{th}}$ column of the Young diagram of $\lambda$.  The $(i, j)^{\text{th}}$ \emph{hook length} of $\lambda$, denoted by $h^{\lambda}_{(i, j)}$, is the number of boxes to the right and below the $(i, j)^{\text{th}}$ box of $\lambda$, including the $(i, j)^{\text{th}}$ box itself.  A Young diagram is \emph{symmetric} if it is symmetric across the line formed by the boxes along the diagonal $(i, i)$.  

\begin{df}
Given an integer $n \geq 2$, a partition $\lambda$ is an \emph{$n$-core} if for every box $(i, j)$ in the Young diagram of $\lambda$, we have $n \nmid h^{\lambda}_{(i, j)}$.  
\end{df}

Following the notation in \cite{BJV}, we denote the set of all $n$-cores by $\mathcal{C}_{n}$, the set of all $n$-cores with first part equal to $k$ by $\mathcal{C}_{n}^k$, and the set of all $n$-cores with first part $\leq k$ by $\mathcal{C}_{n}^{\leq k}$.  Similarly, we will denote the set of all symmetric $n$-cores by $\mathscr{S}_{n}$, the set of all symmetric $n$-cores with first part equal to $k$ by $\mathscr{S}_{n}^k$, and the set of all symmetric $n$-cores with first part $\leq k$ by $\mathscr{S}_{n}^{\leq k}$.  

There is a bijection between balanced flush abacus diagrams with $2n$ runners and $(2n)$-cores.  The bijection $F_{\mathscr{S}}: \mathscr{A}_{2n} \to \mathscr{S}_{2n}$ is defined as follows.

\begin{df} \rm (Definition 5.2 in \cite{HanusaJones})
Let $\mathfrak{a} \in \mathscr{A}_{2n}$ be a balanced flush abacus with $2n$ runners and $M$ active beads.  Define $F_{\mathscr{S}}(\mathfrak{a})$ to be the partition whose $i^{\text{th}}$ row contains the same number of boxes as gaps that appear before the $(M - i + 1)^{\text{th}}$ active bead in reading order.  It is shown in Section 5 of \cite{HanusaJones} that the image of $F_{\mathscr{S}}$ is indeed the set of symmetric $(2n)$-cores.  
\end{df}

A summary of the three combinatorial models for $\widetilde{C}_n/C_n$ discussed in this section appears in Table~\ref{tab:Cmodels}.

\begin{table}[h]
\caption{Models for $\widetilde{W}/W$ in Lie type $C$}
\label{tab:Cmodels}
\begin{tabular}{c l }
\hline\noalign{\smallskip}
Model for $\widetilde{C}_n/C_n$ & Conditions \\
\noalign{\smallskip}\hline\noalign{\smallskip}
Core Partitions & symmetric $(2n)$-cores \\
Abacus Diagrams & balanced flush abaci on $2n$ runners  \\  
Coroot Lattice Points & $(a_1, \ldots, a_n) \in \mathbb{Z}^n$  \\ 
\noalign{\smallskip}\hline
\end{tabular}
\end{table}

\subsection{Canonical Reduced Words}\label{S:canonicalwords}

Under the bijection $F_{\mathscr{S}}$ between abacus diagrams and cores, we get a natural action of $\widetilde{C}_n$ on the set of symmetric $(2n)$-cores.  To describe this action, we begin by labeling $\mathbb{N}^2$ so that $(i, j)$ corresponds to row $i$ and column $j$ in the Young diagram.  Define the \emph{residue} of a position in $\mathbb{N}^2$ to be 
$$\text{res}(i, j) = \left \{\begin{array}{l l } (j - i) ||(2n) & \text{ if } 0 \leq (j - i) || (2n) \leq n \\ 
2n - ((j-i)||(2n)) & \text{ if } n < (j -i ) || (2n) < 2n \end{array} \right. $$
where $p || q$ is the integer in $\{ 0, 1, \ldots, q -1 \}$ that is congruent to $p$ mod $q$.  When a Young diagram is labeled with its residues, we call the boxes in the Young diagram containing residue $i$ the \emph{$i$-boxes} of the diagram.  

We say that a box is \emph{addable} to a partition $\lambda$ if adding the box to the Young diagram of $\lambda$ results in a partition.  Similarly, a box is \emph{removable} if removing the box from $\lambda$ results in a partition.  
\begin{thm}[Theorem 5.8 in \cite{HanusaJones}]\label{C_nCoreAction}
Let $s_i \in \widetilde{C}_n$ be a generator of the Coxeter group $\widetilde{C}_n$.  If there exist addable $i$-boxes or removable $i$-boxes, then $s_i$ acts on $\lambda$ by adding all addable $i$-boxes to $\lambda$, or deleting all removable $i$-boxes from $\lambda$.  If there are no addable or removable $i$-boxes in $\lambda$, then $s_i$ acts as the identity on $\lambda$.  This provides a well-defined action of $\widetilde{C}_n$ on $\mathscr{S}_{2n}$.
\end{thm}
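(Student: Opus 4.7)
The plan is to transport the Coxeter action on abaci (recalled in Section 3.2 of Hanusa-Jones and summarized in the excerpt) to symmetric $(2n)$-cores via the bijection $F_{\mathscr{S}}:\mathscr{A}_{2n}\to\mathscr{S}_{2n}$, and then to interpret the resulting operation in terms of addable and removable boxes. The content of the proof lies in showing that the elementary moves on beads corresponding to each generator $s_i$ are precisely what is needed on the core side to add or remove every $i$-box that is addable or removable.

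First I would establish the dictionary between runners and residues. Under $F_{\mathscr{S}}$, rows of the core are encoded by active beads in reverse reading order, and the boxes in a given row are encoded by the gaps preceding the corresponding bead. A direct check using the definition of reading order shows that the residue of the box recorded by a gap on runner $r$ is determined by $r$ under the folding $r\mapsto r$ for $r\le n$ and $r\mapsto 2n-r+1$ for $r>n$, matching the piecewise definition of $\mathrm{res}(i,j)$. Consequently, the pairs of runners $\{i,i+1\}$ and $\{2n-i,2n-i+1\}$ for $1\le i\le n-1$ both correspond to boxes of residue $i$, the pair $\{n,n+1\}$ corresponds to boxes of residue $n$, and the pair $\{1,2n\}$ (together with the wrap across $N$) corresponds to boxes of residue $0$.

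Next I would characterize addable and removable $i$-boxes on the abacus side: a removable $i$-box corresponds to a bead immediately followed in reading order by a gap, where the runner of the gap is the one paired with that of the bead by the dictionary above; an addable $i$-box corresponds to the reverse configuration. With this in hand, the effect of $s_i$ for $1\le i\le n-1$, which swaps the runner pairs $\{i,i+1\}$ and $\{2n-i,2n-i+1\}$, converts every bead-gap adjacency across these runners into a gap-bead adjacency and vice versa, and so simultaneously toggles every removable or addable $i$-box; flushness of the abacus together with the symmetry of $\lambda$ force all such boxes to be of the same type, so the action either adds all addable $i$-boxes or removes all removable ones. The case $s_n^C$ is analogous, using only the self-mirror pair $\{n,n+1\}$.

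The hard case will be $s_0^C$, whose action on the abacus is not a pure swap but also shifts the lowest bead of runner $1$ down one level and the lowest bead of runner $2n$ up one level. The main obstacle is to verify that this compound move matches, box for box, the addition or removal of residue-$0$ boxes along the diagonal of the symmetric core. I would handle this by tracking how the level shifts alter the gap counts that encode the first column of the partition, confirming that a removable $0$-box at the top-left of $\lambda$ corresponds precisely to the bead configurations on runners $1$ and $2n$ that $s_0^C$ toggles, and likewise for addable $0$-boxes. When no such configuration is present on the abacus, $s_0^C$ acts trivially there and hence also on $\lambda$, yielding the claimed identity behavior and completing the theorem.
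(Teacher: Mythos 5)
This statement is quoted verbatim as Theorem 5.8 of Hanusa--Jones and is not proved anywhere in the paper, so there is no in-paper argument to compare against; the paper simply uses it as an imported black box. That said, your strategy --- transporting the generator action on balanced flush abaci through $F_{\mathscr{S}}$ and matching bead moves to box additions and removals --- is the natural one and is essentially how the result is established in the cited reference. The skeleton is sound: the first-column hook lengths of $F_{\mathscr{S}}(\mathfrak{a})$ are the positions of the active beads, a single removable box corresponds to a bead whose predecessor in reading order is a gap, and flushness makes the set of such level-by-level bead/gap discrepancies between a pair of adjacent runners an interval, which is what guarantees that for a fixed residue $i$ all relevant boxes are simultaneously addable or simultaneously removable rather than mixed.

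Two points in your outline need more care before this is a proof rather than a plan. First, your runner-to-residue dictionary must account for the fact that the abacus has period $N = 2n+1$ rather than $2n$, so consecutive positions in reading order do not simply cycle through residues mod $2n$; the folding $r \mapsto 2n - r + 1$ you describe is the right shape, but the offset by one coming from $N = 2n+1$ is exactly where the wrap-around for $s_0^C$ and the level shifts enter, and you have deferred that computation. Second, you should make explicit that for $1 \le i \le n-1$ the two runner pairs $\{i, i+1\}$ and $\{2n-i, 2n-i+1\}$ produce mirror-image boxes of the symmetric core, so that the operation is well defined on $\mathscr{S}_{2n}$ (i.e., preserves symmetry), and separately that $s_n^C$ and $s_0^C$, which each involve a single self-mirrored pair, add or remove boxes in symmetric positions including possibly diagonal ones. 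Neither issue is an obstruction, but both are where the actual content of the verification lives, and your proposal currently names them without carrying them out.
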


Using Theorem~\ref{C_nCoreAction}, we may repeatedly delete removable boxes from a (2n)-core $\lambda$ to obtain a canonical reduced word corresponding to $\lambda$, as the following example illustrates.  

\begin{exam} 
Taking a 4-core and repeatedly applying Theorem~\ref{C_nCoreAction} as shown in Figure~\ref{fig:exampleC_2ActOnCore}, we obtain the reduced word $s_0s_1s_0s_2s_1s_0$.
\begin{figure}[htp]
\begin{center}
\subfigure[$s_0\lambda$]{\includegraphics[scale = 0.5]{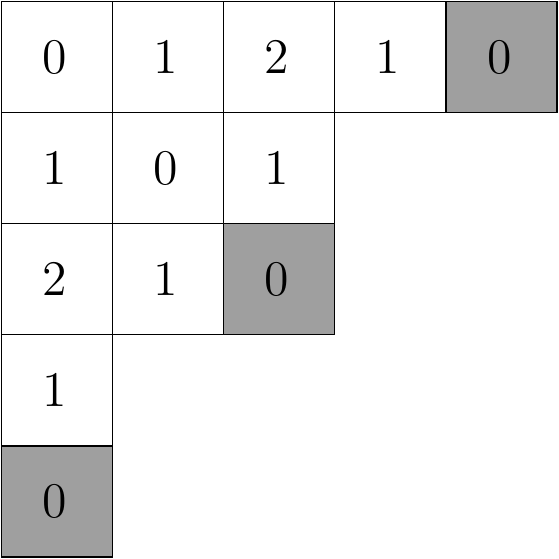}} \hspace{0.2in}
\subfigure[$s_1s_0\lambda$] {\includegraphics[scale = 0.5]{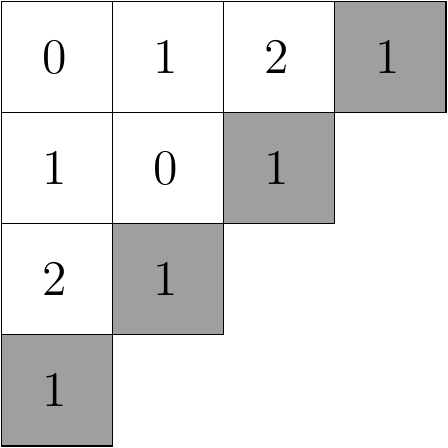}} \hspace{0.2in}
\subfigure[$s_0s_1s_0 \lambda$] {\includegraphics[scale = 0.5]{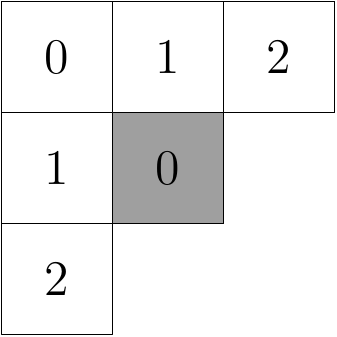}} \hspace{0.2in}
\subfigure[$s_2s_0s_1s_0 \lambda$] {\includegraphics[scale = 0.5]{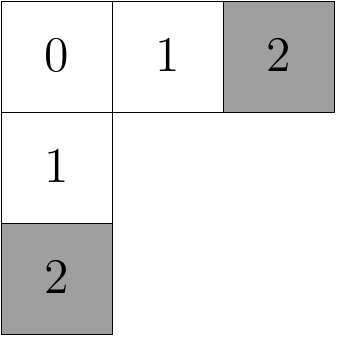}} \hspace{0.2in}
\subfigure[$s_1s_2s_0s_1s_0 \lambda$] {\includegraphics[scale = 0.5]{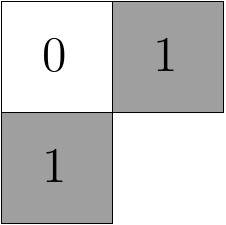}} \hspace{0.2in}
\subfigure[$s_0s_1s_2s_0s_1s_0 \lambda$] {\includegraphics[scale = 0.5]{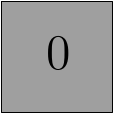}}
\caption{The action of $\widetilde{C}_2$ on a $4$-core and the canonical reduced word. }
\label{fig:exampleC_2ActOnCore}
\end{center}
\end{figure}
\end{exam}


\section{The Projection Map $\Phi_n$}\label{sectionDefining}

\subsection{Review of the Results for Type $A$} \label{sectionReview}

In this section, we review results obtained for the projection map $\Phi_n: \widetilde{S}_n/S_n \to \widetilde{S}_{n-1}/S_{n-1}$ in \cite{BJV}.  Core partitions, abacus diagrams, and root lattice points also index elements of $\widetilde{S}_n/S_n$ in Lie type $A$.  These models are the symmetric group analogues of the models described in Section~\ref{sectionCombinatorialModels} for type $\widetilde{C}_n$, and they are summarized in Table \ref{tab:Amodels}.

\begin{table}[h]
\caption{Models for Lie type $A$}
\label{tab:Amodels}
\begin{tabular}{c l }
\hline\noalign{\smallskip}
Model for $\widetilde{S}_n/S_n$ & Conditions \\
\noalign{\smallskip}\hline\noalign{\smallskip}
Core Partitions & $n$-cores \\
Abacus Diagrams & $n$ runners and sum of the highest levels that contain a bead equals 0 \\  
Root Lattice Points & $(a_1, \ldots, a_n) \in \mathbb{Z}^n$ such that $\sum_{i = 1}^n a_i = 0$ \\ 
\noalign{\smallskip}\hline
\end{tabular}
\end{table}

We first define a map $\Phi_n^k: \widetilde{S}_n/S_n \to \widetilde{S}_{n-1}/S_{n-1}$ on core partitions.  Given an $n$-core $\lambda$, consider its Young diagram.  To apply $\Phi_n^k$, first compute all of the hook lengths $h_{(i, 1)}$ of the left most squares of the $i^{\text{th}}$ row.  Then, delete all rows $i$ of $\lambda$ for which $h_{(i, 1)} \equiv h_{(1, 1)} \pmod{n}$.  Using abacus diagrams, Berg, Jones, and Vazirani show that when $\Phi_n^k$ is applied to a $n$-core $\lambda$ with first part equal to $k$, the resulting partition $\Phi_n^k(\lambda)$ is a $(n-1)$-core with first part at most $k$.  Furthermore, it is shown in \cite{BJV} that the map $\Phi_n^k: \mathcal{C}_{n}^k \to \mathcal{C}_{n-1}^{\leq k}$ is a bijection. 

Given an abacus $\mathfrak{a}$ corresponding to an $n$-core in $\mathcal{C}_{n}^k$, the map $\Phi_n^k$ acts by removing the entire runner with the largest bead.  To obtain an abacus corresponding to an $(n-1)$-core, simply place the remaining runners onto an abacus on $n-1$ runners, keeping the levels of the entries the same as they were prior to removing the largest runner.

When defined on root lattice points, the map $\Phi_n^k$ becomes more geometrically enlightening.  As a review, recall that the simple roots $\Delta$ of type $A_{n-1}$ are the $n-1$ vectors 
$$\alpha_1 = \varepsilon_1 - \varepsilon_2, \, \, \alpha_2 = \varepsilon_2 - \varepsilon_3, \, \, \ldots, \, \, \alpha_{n-1} = \varepsilon_{n-1} - \varepsilon_n.$$  
In type $A$, the $n$-cores correspond to (co)root lattice points $(a_1, \ldots, a_n) \in \Lambda_R^{\vee}$, where $a_i \in \mathbb{Z}$ and $\sum_{i = 1}^n a_i = 0$; refer to Remark~\ref{rem:coroot} for a disambiguation of the terminology.  Let $V =  \Lambda_R^{\vee}  \otimes_{\mathbb{Z}} \R \subsetneq \mathbb{R}^n$.  As summarized in the table above, elements of $V$ are $(a_1, \ldots, a_n) \in \mathbb{R}^n$ such that $\sum_{i = 1}^n a_i = 0$.  When the cores are identified with the root lattice points, the domain $\mathcal{C}_n^k$ of $\Phi_n^k$ lies inside a hyperplane in $V$.  More specifically, for $k \geq 0$, let $H_{n}^k$ denote the affine hyperplane 
\begin{equation}\label{eq:Ahyperplane}H_{n}^k = \left\{ v \in \mathbb{R}^n: (v, \varepsilon_{(k \modulo n)}) = \left\lceil \frac{k}{n} \right\rceil \right\} \cap V, \end{equation}
where $1 \leq (k \modulo n) \leq n$.   Corollary 3.2.15 in \cite{BJV} says that the $n$-cores $\lambda$ with first part $\lambda_1 = k$ all lie inside $H_{n}^k \cap \Lambda_R^{\vee}$.  Further, Theorem 4.1.1 in \cite{BJV} says that the map $\Phi_{n}^k$, when restricted to the domain $\mathcal{C}_{n}^k$ inside $H_{n}^k \cap {\Lambda_{R}}^{\vee}$, is a projection onto the hyperplane $H_{n}^k$.


\subsection{The Map $\Phi_n$ on Symmetric Cores}\label{MapOnCores}

The map ${\Phi_n}$ acting on the set $\mathscr{S}_{2n}$ of symmetric $(2n)$-cores can be defined in the following way. First, label the boxes of a $(2n)$-core by the the elements of $\Z/2n\Z$ repeating along diagonals by assigning box $(i, j)$ the label $j - i \pmod{2n}$.  Then, delete all the rows and columns that end with the same element of $\Z/2n\Z$ as the first row (equivalently column).  An example of this process is shown below: 
 
\begin{figure}[htp]
\begin{center}
\subfigure{\includegraphics[scale = 0.5]{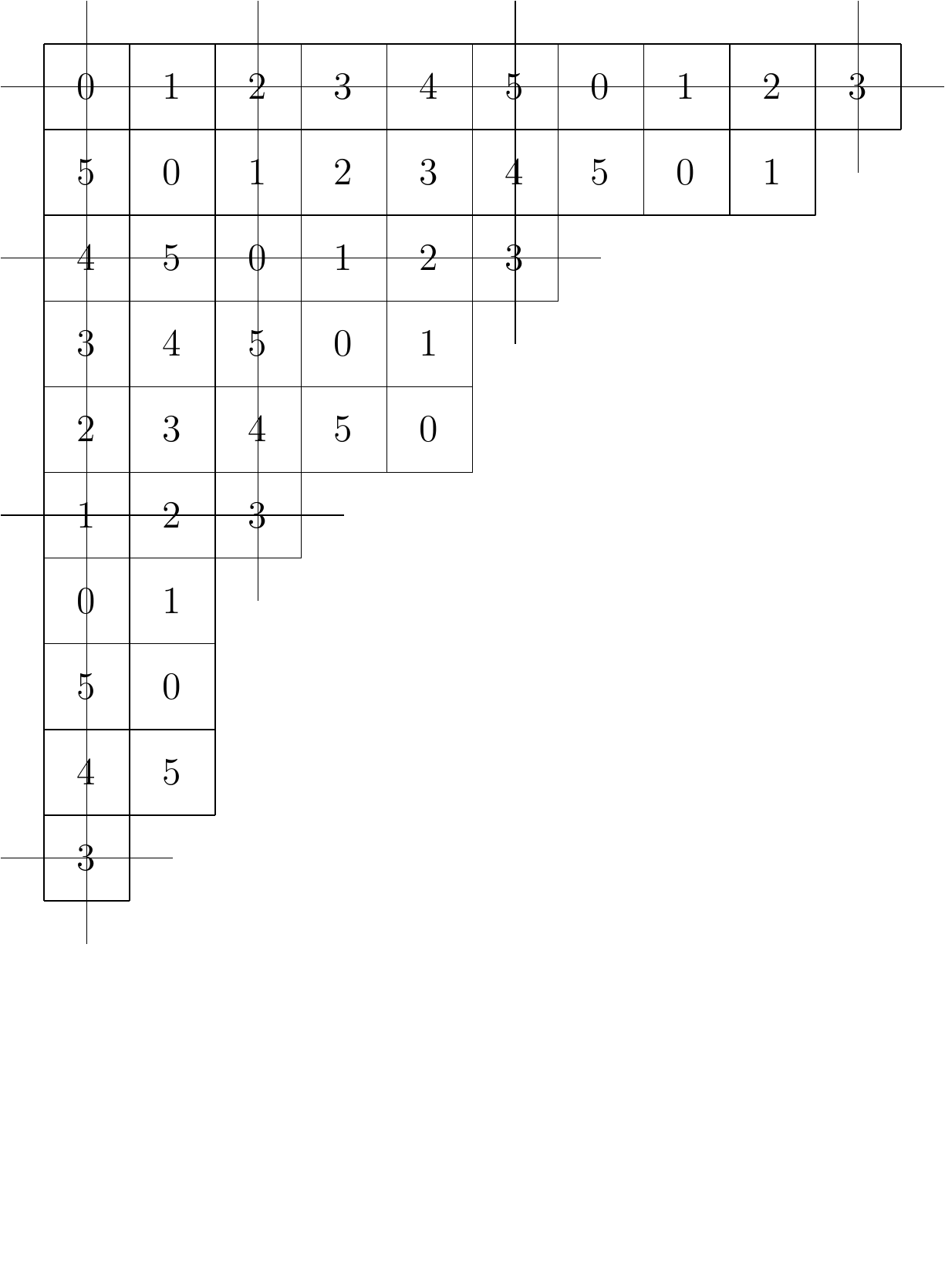}} \hspace{0.3in}
\subfigure{\includegraphics[scale = 0.5]{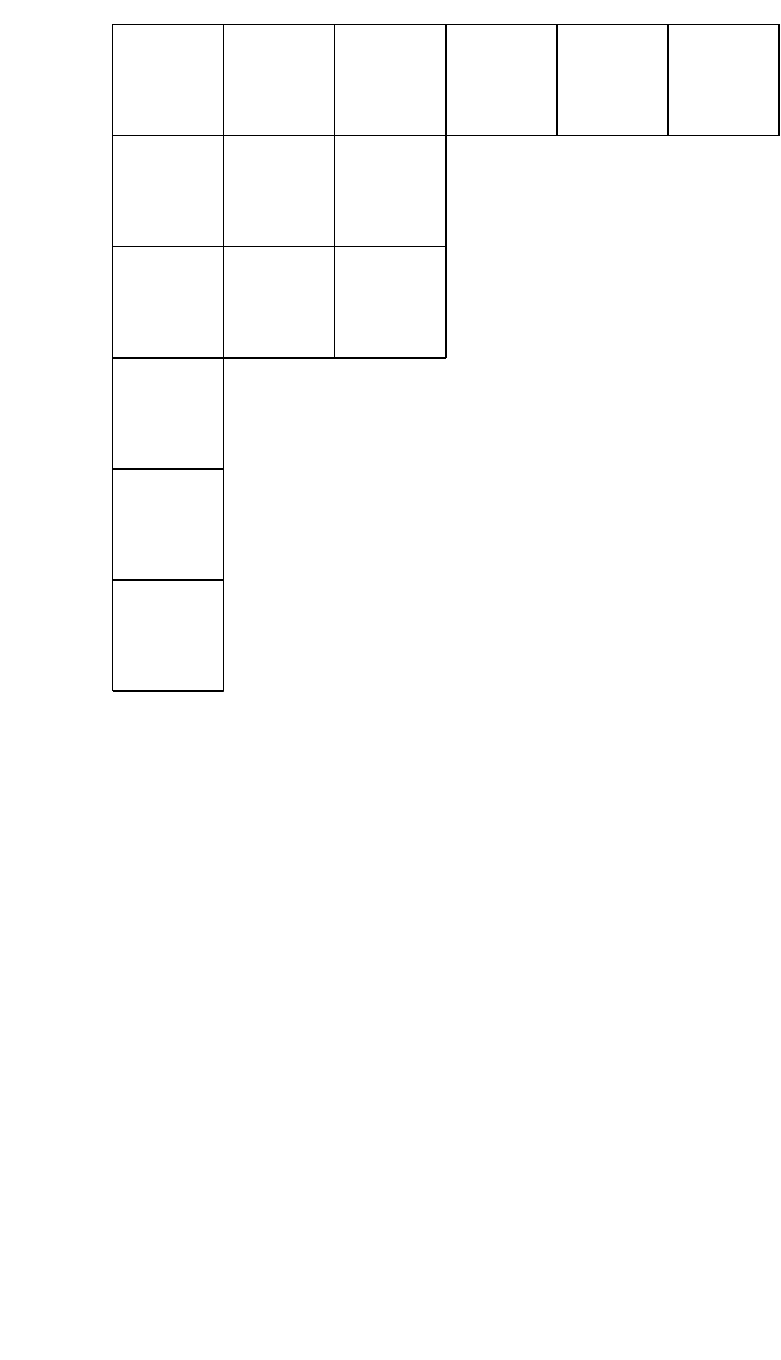}} 
\vspace{-1in}
\caption{The map $\Phi_3$ acting on a symmetric 6-core}
\label{figPhiOnCores}
\end{center}
\end{figure}

\subsection{The Map $\Phi_n$ on Abacus Diagrams}

Given an abacus $\mathfrak{a}$, write $\mathfrak{a} = (a_1, \ldots, a_n, -a_n, \ldots, -a_1)$, where we denote by $a_i$ the level of the lowest bead in column $i$ so that the $i^{\text{th}}$ coordinate of $\mathfrak{a}$ equals $\operatorname{level}_i(\mathfrak{a})\varepsilon_i$ for $1 \leq i \leq n$. Note that these are precisely the coordinates of the coroot lattice point corresponding to $\mathfrak{a}$ by Theorem \ref{T:corootcoords}.  

Define $\Phi'_n(\mathfrak{a})$ to be the abacus obtained via the following procedure: first, locate the right-most runner with the largest coordinate  in abacus $\mathfrak{a}$.  Then, delete this runner \textit{and} its symmetric runner.  The resulting abacus $\Phi'_n(\mathfrak{a})$ is a balanced abacus with $2n-2$ runners, corresponding to an element of $\widetilde{C}_{n-1}/C_{n-1}$.  An example of this procedure is shown in Figure \ref{figPhiOnAbacus} (the abacus corresponds to the core partition in Figure~\ref{figPhiOnCores}).
\begin{figure}[htp]\label{figPhiOnAbacus}
\begin{center}
\subfigure[$\mathfrak{a} = (1, 2, -2, 2, -2, -1)$]{\includegraphics[scale = 0.65]{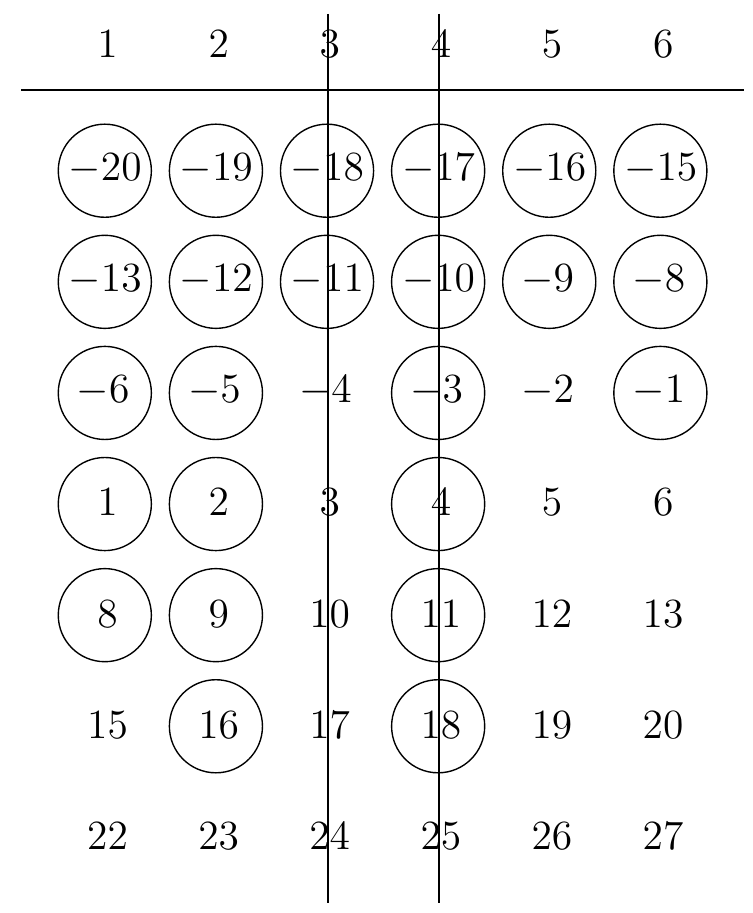}} \hspace{0.3in}
\subfigure[$\Phi'_3(\mathfrak{a}) = (1,2,-2,-1)$]{\includegraphics[scale = 0.65]{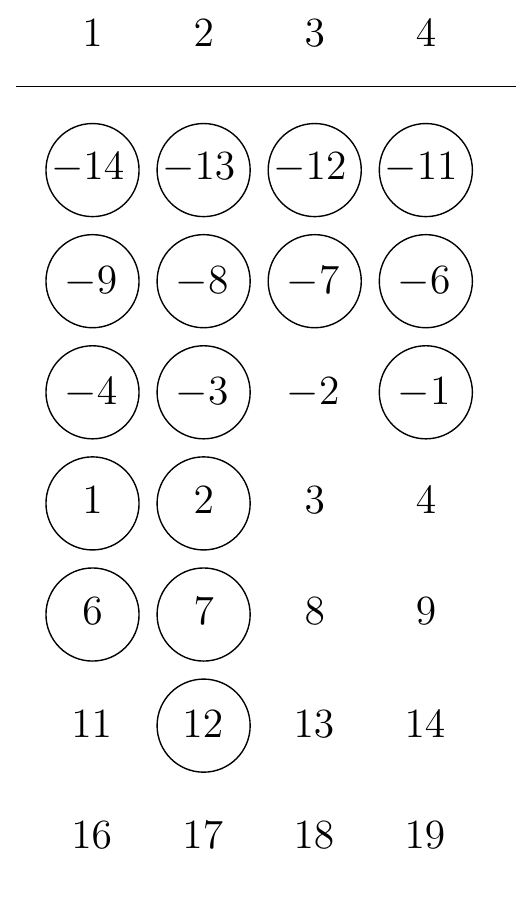}} 
\caption{Action of $\Phi'_3$ on the abacus $\mathfrak{a} = (1, 2, -2, 2, -2, -1)$.}
\label{figPhiOnAbacus}
\end{center}
\end{figure}

As reviewed in Section \ref{sec:corepartitions}, balanced abacus diagrams for the quotient $\widetilde{C}_n/C_n$ are in bijective correspondence with symmetric $(2n)$-cores.  Let this bijection be denoted by $F_{\mathscr{A}}$.  Using the bijection $F_{\mathscr{A}}$ and the map $\Phi_n$ on $\mathscr{S}_{2n}$, we get an induced map $\Phi_n: \mathscr{A}_{2n} \to \mathscr{A}_{2n-2}$ on abacus diagrams such that the following diagram is commutative: 
\begin{eqnarray}\label{coreToAbacus}
\xymatrix{
\mathscr{S}_{2n} \ar[r]^{F_{\mathscr{A}}}  \ar[d]^{\Phi_n} & \mathscr{A}_{2n} \ar[d]^{\Phi_n} \\ 
\mathscr{S}_{(2n-2)} \ar[r]^{F_{\mathscr{A}}} & \mathscr{A}_{(2n-2)}  
}
\end{eqnarray}

\begin{prop}\label{delRowCol}
The induced map $\Phi_n$ on abacus diagrams is the map ${\Phi'_n}$.  
\end{prop}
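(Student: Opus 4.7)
The plan is to trace the bijection $F_{\mathscr{A}}$ carefully and show that the row/column deletions defining $\Phi_n$ on $\mathscr{S}_{2n}$ correspond exactly to the runner deletions defining $\Phi'_n$ on $\mathscr{A}_{2n}$. First I would recall that under $F_{\mathscr{S}}$ the $i$th row of the core has length equal to the number of gaps preceding the $(M-i+1)$st active bead in reading order, so the first (longest) row corresponds to the last active bead $b^\star$ of $\mathfrak{a}$. By Theorem~\ref{T:corootcoords}, the coordinate of a runner is the level of its lowest bead, so the rightmost runner containing the largest coordinate is precisely the runner containing $b^\star$, and this is the first runner removed by $\Phi'_n$.

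The main step is a residue computation. For each active bead $b$ on runner $r$ at level $m$, the label $b = mN + r$ together with the number of preceding gaps determines both the index $i$ of the corresponding row and the length $\lambda_i$ of that row. A direct calculation shows that the residue of the rightmost box $(i, \lambda_i)$, namely $\lambda_i - i \pmod{2n}$, depends only on $r \pmod{2n}$ (not on $m$ or on the specific bead). Consequently, two rows of $F_{\mathscr{A}}(\mathfrak{a})$ end in the same residue if and only if the corresponding beads lie on the same runner. In particular, the set of rows that $\Phi_n$ deletes is exactly the set of rows whose corresponding beads lie on the same runner as $b^\star$, i.e.\ on the largest runner.

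Next I would invoke the symmetry of the core: transposing the Young diagram interchanges rows and columns and negates residues mod $2n$, and under $F_{\mathscr{A}}$ this transposition corresponds to the reflection of the abacus that swaps runner $i$ with runner $N - i$ (this is precisely the reflection built into the balanced flush condition, since the lowest bead labels on runners $i$ and $N-i$ sum to $N$). Therefore the columns that $\Phi_n$ deletes—those ending in the residue of the first column—correspond to the beads on the runner symmetric to the largest runner, which is exactly the second runner removed by $\Phi'_n$. Combining the two paragraphs, the Young diagrams of $\Phi_n(F_{\mathscr{A}}(\mathfrak{a}))$ and $F_{\mathscr{A}}(\Phi'_n(\mathfrak{a}))$ are obtained by deleting the same set of rows and columns, so they coincide and the diagram~\eqref{coreToAbacus} commutes.

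The main obstacle is the residue computation in the second paragraph: one has to set up the indexing conventions (reading order, level normalization, the discrepancy between labels mod $N = 2n+1$ and residues mod $2n$) consistently, and track how $\lambda_i - i$ simplifies in terms of $r$. After this, verifying that the result of deleting a symmetric pair of runners is again a balanced flush abacus on $2n - 2$ runners is immediate from the definitions, and the fact that this abacus indexes an element of $\widetilde{C}_{n-1}/C_{n-1}$ is part of the setup inherited from Hanusa--Jones~\cite{HanusaJones}.
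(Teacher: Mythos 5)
Your proposal is correct in substance and reaches the paper's conclusion by the same overall strategy: rows of the core correspond to active beads, two rows end in the same element of $\Z/2n\Z$ exactly when their beads lie on the same runner, so deleting the right-most largest runner accounts for the row deletions; the column deletions are then accounted for by the symmetric runner. Where you diverge is in how the column half is justified. The paper's Lemma~\ref{delRowColLem} argues directly that the columns of $F_{\mathscr{S}}(\mathfrak{a})$ correspond to the \emph{gaps} preceding the largest active bead, that two columns end in the same residue iff their gaps lie on the same runner, and that the first column's gap (the smallest gap) sits on the left-most smallest runner, i.e.\ the symmetric runner of the largest one. You instead invoke the transposition symmetry of the core together with the reflection $i \leftrightarrow N-i$ built into the balanced condition. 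That route is conceptually clean and buys a genuine economy (no separate gap-counting argument for columns), but be careful with one point of bookkeeping: under the paper's conventions the deleted columns correspond to \emph{gaps} on the symmetric runner, not to beads on it --- your phrase ``correspond to the beads on the runner symmetric to the largest runner'' conflates the two sides of the bead--gap complementation that realizes conjugation on abaci. The conclusion is unaffected, since $\Phi'_n$ deletes the entire symmetric runner, beads and gaps alike, but a careful write-up should track whether rows of $\lambda^T$ are being matched to active beads of $\mathfrak{a}(\lambda^T)=\mathfrak{a}(\lambda)$ or to reflected gaps of $\mathfrak{a}(\lambda)$. Finally, both you and the paper leave the central residue computation (``two rows/columns end in the same residue iff same runner'') as an asserted calculation, so the level of detail is comparable; if anything, that computation is the one step that should eventually be written out.
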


To prove this proposition, we will need the following lemma: 

\begin{lem}\label{delRowColLem}
In an abacus diagram $\mathfrak{a}$ corresponding to the core partition $F_{\mathscr{S}}(\mathfrak{a})$, there is a left-most runner whose largest bead is located at the smallest level.  Deleting this runner results in an abacus $\mathfrak{a}'$ whose corresponding core partition is obtained from $F_{\mathscr{S}}(\mathfrak{a})$ by deleting all columns that end with the same element of $\Z/2n\Z$ as the first column. 
\end{lem}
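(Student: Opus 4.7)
The plan is to prove the lemma by first identifying the runner to delete, then exhibiting a dual version of the Hanusa--Jones bijection $F_{\mathscr{S}}$ that associates columns of the core with ``active gaps'' of the abacus, and finally tracking residues modulo $2n$ through this dual correspondence. Writing the topmost-bead levels of $\mathfrak{a}$ as $(a_1, \ldots, a_n, -a_n, \ldots, -a_1)$ via the balanced/symmetric condition, let $r^{*}$ be the smallest index at which this tuple attains its minimum value; equivalently, $r^{*} = 2n + 1 - r$, where $r$ is the rightmost index attaining the maximum. So the leftmost runner whose (topmost) bead is at the smallest level is well-defined, and it is the symmetric partner of the rightmost runner attaining the maximum level (the latter playing the analogous role for row deletion in the type-$A$ precursor of \cite{BJV}).

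The key technical step is the dual bijection. Just as $F_{\mathscr{S}}(\mathfrak{a})$'s $i^{\text{th}}$ row has length equal to the number of gaps preceding the $(M - i + 1)^{\text{st}}$ active bead in reading order, I would establish that the $j^{\text{th}}$ column of $F_{\mathscr{S}}(\mathfrak{a})$ has length equal to the number of beads preceding the $(G - j + 1)^{\text{st}}$ active gap in reading order, where $G$ counts active gaps. This is essentially the statement that the conjugate partition can be read off the abacus by interchanging the roles of beads and gaps; since symmetric cores are self-conjugate and balanced flush abaci are reflection-symmetric, this yields an honest column-by-column description of the original partition. Under this dual bijection, the runner on which the gap associated to column $j$ sits is recoverable from the residue $(j - \lambda_j) \bmod 2n$ of the last box of column $j$, once the index shift arising from the skipped multiples of $N = 2n + 1$ in the abacus labeling is absorbed.

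With the dual bijection established, the proof concludes as follows. The first column's last box is $(\lambda_1, 1)$ (using $\lambda'_1 = \lambda_1$ by symmetry), with residue $1 - \lambda_1 \pmod{2n}$, so it corresponds to the \emph{last} active gap in reading order. Because every runner $i \neq r^{*}$ has $a_i > a_{r^{*}}$, the latest active gap in reading order must sit on runner $r^{*}$, and more generally every active gap on runner $r^{*}$ corresponds to a column whose last box has residue $1 - \lambda_1 \pmod{2n}$. Conversely, no active gap outside runner $r^{*}$ lies in this residue class. Hence deleting runner $r^{*}$ from $\mathfrak{a}$ removes exactly the columns of $F_{\mathscr{S}}(\mathfrak{a})$ ending in the same residue as the first column.

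The main obstacle is making the dual bijection precise: the Hanusa--Jones correspondence is phrased row-by-row via active beads, and one must verify that the natural column-by-column statement via active gaps really holds in the balanced flush setting, where active beads and active gaps are not in obvious numerical symmetry. A second subtle point is aligning residues modulo $2n$ on the Young diagram side with runner indices on the abacus side, because positions $mN + i$ skip multiples of $N = 2n + 1$. A small worked example --- say, the symmetric $4$-core $(3, 1, 1)$ corresponding to an abacus in $\widetilde{C}_2$ --- should pin down signs and shifts, after which a careful bookkeeping argument extends to the general case.
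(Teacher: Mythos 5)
Your overall strategy --- read the columns of $F_{\mathscr{S}}(\mathfrak{a})$ off the abacus via gaps, then show that two columns end in the same residue exactly when their gaps lie on the same runner --- is the same as the paper's, but you have set up the column-to-gap correspondence in the reverse direction, and the final step of your argument rests on a claim that this reversal makes false. The correct dual reading is that the $j^{\text{th}}$ column corresponds to the $j^{\text{th}}$ active gap in \emph{increasing} reading order, with length equal to the number of active beads \emph{after} that gap; in particular the first (longest) column corresponds to the \emph{smallest} gap, not the last active gap. Your proposed formula (length of column $j$ equals the number of beads preceding the $(G-j+1)^{\text{st}}$ active gap) already fails for $\lambda=(2,1)$ read from beta-numbers $\{3,1\}$, where it predicts column lengths $(1,0)$ instead of $(2,1)$.

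The step that actually breaks is the assertion that ``the latest active gap in reading order must sit on runner $r^{*}$.'' What is true, and what the paper uses, is that the \emph{smallest} gap sits on $r^{*}$: on a flush runner $i$ the gaps have labels $(a_i+1)N+i,(a_i+2)N+i,\ldots$, so the minimum over all runners is attained on the leftmost runner minimizing $a_i$. The largest active gap, by contrast, can live on a different runner. For $n=2$ take $\mathfrak{a}=(0,2,-2,0)$: the largest bead has label $12$ on runner $2$, so $r^{*}=3$; the active gaps are $-2,3,6,8,9,11$; the smallest gap $-2$ is on runner $3$, but the largest active gap $11$ is on runner $1$. Hence anchoring the first column's residue class to the last active gap does not identify runner $r^{*}$, and the claimed equivalence ``active gap on runner $r^{*}$ $\Leftrightarrow$ column in the first column's residue class'' is justified from the wrong end. (A smaller issue: strict inequality $a_i>a_{r^{*}}$ for all $i\neq r^{*}$ can fail because smallest runners need not be unique; one needs the leftmost-minimizer convention.) Once the correspondence is stated in the correct direction --- first column $\leftrightarrow$ smallest gap, which lies on the leftmost smallest runner --- the remainder of your plan, including the residue bookkeeping for the skipped multiples of $N=2n+1$, lines up with the paper's proof.
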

\begin{proof}
By construction of the core partition from the abacus diagram, the columns correspond to the gaps that are smaller than the largest active bead.  For example, the smallest gap, which is smaller than every active bead, corresponds to the first column.  It can be easily checked (similar to the case of deleting the right-most runner having a bead at the highest level) that two columns end with the same element of $\Z/2n\Z$ if and only if their gaps are in the same runner.  The left-most runner whose largest bead is the smallest is the runner containing the smallest gap.  By deleting this runner, we are deleting all the columns that end with the same element of $\Z/2n\Z$ as the first column, as claimed.
\end{proof}

We are now ready to prove Proposition~\ref{delRowCol}. 

\begin{proof}[Proof of Proposition \ref{delRowCol}]
Notice that the symmetric runner of the right-most largest runner is the left-most smallest runner.  To apply $\Phi'_n$, we will proceed in two steps.  Deleting the right-most largest runner corresponds to deleting all rows that end in the same element of $\Z/2n\Z$ as the first row.  Deleting the symmetric runner, by Lemma~\ref{delRowColLem}, corresponds to deleting all columns that end with the same element of $\Z/2n\Z$  as the first column.  As in the proof of Lemma~\ref{delRowColLem}, the columns correspond to gaps in the symmetric runner that are smaller than the largest active bead.  It follows that the resulting $(2n-2)$-core partition we get from applying $\Phi'_n$ is what we would obtain if we apply the map $\Phi_n$.
\end{proof}

\subsection{The Domain and Codomain of the Map $\Phi_n$}\label{DomainCodomain}

In order to fully understand the map $\Phi_n: \widetilde{C}_n /C_n \to \widetilde{C}_{n-1}/C_{n-1}$, it is necessary to use the theory of alcoves.  We have seen  that for type $A$, the map $\Phi_n^k$ is a projection when restricted to the root lattice points corresponding to $n$-cores in $\mathcal{C}_n^k$.  In general, given a parabolic quotient $\widetilde{W}/W$, we wish to partition its elements into hyperplane domains.  The cores corresponding to each hyperplane domain should satisfy common combinatorial properties.  By identifying each hyperplane with $\widetilde{C}_{n-1}/C_{n-1}$, the map $\Phi_n: \widetilde{C}_n/C_n \to \widetilde{C}_{n-1}/C_{n-1}$ is subsequently defined by projecting each domain onto their hyperplanes.   In this subsection, we wish to find a partition of the domain of $\Phi_n$ so that when restricted to these parts, the map $\Phi_n$ is bijective. 

\begin{lem}\label{lemLambda1}
Let $\mathfrak{a} \in \mathscr{A}_{2n}$ be a balanced flush abacus with $2n$ runners.  If the largest bead of $\mathfrak{a}$ is located at level $\ell$ of runner $i$, where $1 \leq i \leq 2n$, then the first part of the symmetric $(2n)$-core $F_{\mathscr{S}}(\mathfrak{a})$ is $\lambda_1 = 2n(\ell - 1) + i$. 
\end{lem}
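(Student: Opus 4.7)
The plan is to unwind the definition of $F_{\mathscr{S}}$ and reduce the claim to a gap-counting computation on the abacus, then exploit the balanced condition to eliminate the dependence on the individual runner levels. By definition, $\lambda_1$ equals the number of gaps in $\mathfrak{a}$ that precede the $M^{\text{th}}$ active bead in reading order, where $M$ is the number of active beads. The first step is to identify this $M^{\text{th}}$ active bead: since $\mathfrak{a}$ is balanced (hence nonempty on every runner) and flush, the last active bead in reading order is simply the bead with the largest label overall, namely the bead at level $\ell$ on runner $i$ with label $\ell N + i$, where $N = 2n+1$. Thus $\lambda_1$ reduces to the number of entries of $\mathfrak{a}$ with label strictly less than $\ell N + i$ that are gaps.

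Next I would count these gaps runner-by-runner. For each $1 \leq j \leq 2n$ let $\ell_j$ be the level of the lowest bead on runner $j$, so $\ell_i = \ell$ and, by the flush hypothesis, the gaps on runner $j$ are precisely the entries at levels $\ell_j + 1, \ell_j + 2, \dots$. A gap on runner $j$ at level $m$ has label $mN + j < \ell N + i$ iff $m < \ell + (i - j)/N$. Since $|i - j| < N$, this splits cleanly into three cases: for $j < i$ the qualifying levels are $\ell_j + 1, \dots, \ell$, giving $\ell - \ell_j$ gaps; for $j = i$ there are no such gaps; and for $j > i$ the maximality of the bead at $(\ell, i)$ in reading order forces $\ell_j < \ell$ strictly, and the qualifying levels are $\ell_j + 1, \dots, \ell - 1$, contributing $\ell - 1 - \ell_j$ gaps.

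The final step is to sum and invoke the balanced condition. Adding the three cases yields
\begin{equation*}
\lambda_1 \;=\; (i-1)\ell + (2n - i)(\ell - 1) - \sum_{j \neq i} \ell_j.
\end{equation*}
The balanced hypothesis $\ell_j + \ell_{N - j} = 0$ pairs up all runners and gives $\sum_{j=1}^{2n} \ell_j = 0$, hence $\sum_{j \neq i} \ell_j = -\ell_i = -\ell$. Substituting and simplifying collapses the expression to $2n(\ell - 1) + i$, as desired.

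The main obstacle lies in the bookkeeping of Step 2, in particular in handling the equality versus strict inequality between $\ell_j$ and $\ell$: for $j > i$ maximality in reading order forces $\ell_j < \ell$ so that the formula $\ell - 1 - \ell_j$ is nonnegative, while for $j < i$ one must allow $\ell_j = \ell$ (another runner can tie in level without violating maximality), in which case the corresponding contribution $\ell - \ell_j$ correctly degenerates to zero. Once these edge cases are pinned down, the balanced sum identity $\sum_j \ell_j = 0$ does essentially all of the remaining work.
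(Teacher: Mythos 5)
Your proposal is correct and follows essentially the same route as the paper: identify $\lambda_1$ as the number of gaps preceding the largest bead, count these gaps runner-by-runner as $\sum_{j\le i}(\ell - \ell_j) + \sum_{j>i}(\ell - \ell_j - 1)$, and collapse the sum using the balanced condition $\sum_j \ell_j = 0$. Your treatment is somewhat more explicit about the reading-order inequalities and the strictness of $\ell_j < \ell$ for $j > i$, but the argument is the same.
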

\begin{proof}
The first part of the $(2n)$-core $F_{\mathscr{S}}(\mathfrak{a})$, say $\lambda_1$, is equal to the number of gaps that are smaller than the largest bead.  Let the largest bead of each runner be located at levels $(r_1, r_2, \ldots, r_{2n}) = (a_1, \ldots, a_n, -a_n, \ldots, -a_1)$, respectively.  Since the largest bead is located at level $\ell$ of runner $i$ (i.e. $r_i = \ell$), the number of gaps that occur before this bead is
\begin{eqnarray}
\sum_{j = 1}^{i} (\ell - r_j) + \sum_{j = i+1}^{2n} (\ell - r_j - 1) &=& 2n \ell - \sum_{j = 1}^{2n} r_j - (2n - i)  \nonumber \\ 
&=& 2n \ell - (2n - i ) = 2n(\ell - 1) + i,  \nonumber
\end{eqnarray}
as desired. 
\end{proof}

Let $\Phi_n^k$ denote the map $\Phi_n$ when the domain is restricted to $\mathscr{S}_{2n}^{k}$.

\begin{cor}\label{corLambda1}
Under the bijection between symmetric $(2n)$-cores and balanced flush abaci, $F_{\mathscr{A}}(\mathscr{S}_{2n}^k)$ is the set of abaci where the largest runner is located at level $\ell = \left\lceil \frac{k}{2n}\right\rceil$ of runner $i := k \pmod{2n}$.
\end{cor}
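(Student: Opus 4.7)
The plan is to deduce the corollary directly from Lemma~\ref{lemLambda1} by inverting the formula $\lambda_1 = 2n(\ell - 1) + i$. The key observation is the following elementary fact: for any positive integer $k$, there is a unique pair $(\ell, i)$ with $\ell \geq 1$ and $1 \leq i \leq 2n$ such that $k = 2n(\ell - 1) + i$. Explicitly, one takes $\ell = \lceil k/(2n) \rceil$ and $i = k - 2n(\ell - 1)$, and a quick case check (treating $k$ divisible by $2n$ separately from $k$ not divisible by $2n$) confirms that $i$ lies in $\{1, 2, \ldots, 2n\}$ and agrees with $k \pmod{2n}$ under the convention that takes the residue in $\{1, 2, \ldots, 2n\}$ rather than $\{0, 1, \ldots, 2n-1\}$. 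This convention is the one forced by Lemma~\ref{lemLambda1}, so it is the one I would adopt.

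Next I would apply Lemma~\ref{lemLambda1} in both directions to obtain the desired equality of sets. For the forward inclusion, suppose $\mathfrak{a} \in F_{\mathscr{A}}(\mathscr{S}_{2n}^k)$, so that $F_{\mathscr{S}}(\mathfrak{a})$ has first part equal to $k$. Letting $(\ell', i')$ denote the level and runner index of the largest bead of $\mathfrak{a}$, Lemma~\ref{lemLambda1} gives $k = 2n(\ell' - 1) + i'$, and by the uniqueness established above we must have $\ell' = \lceil k/(2n) \rceil$ and $i' = k \pmod{2n}$. Conversely, if the largest bead of $\mathfrak{a}$ sits at level $\ell = \lceil k/(2n) \rceil$ of runner $i = k \pmod{2n}$, then Lemma~\ref{lemLambda1} immediately computes the first part as $2n(\ell - 1) + i = k$, placing $\mathfrak{a}$ in $F_{\mathscr{A}}(\mathscr{S}_{2n}^k)$.

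The only subtlety, and really the only potential obstacle, is fixing the correct convention for the residue $k \pmod{2n}$, since the statement of the lemma requires $i$ in $\{1, \ldots, 2n\}$ rather than in $\{0, \ldots, 2n - 1\}$. Once this is pinned down, the corollary is just a reformulation of Lemma~\ref{lemLambda1}, and no further argument is needed.
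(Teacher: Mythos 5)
Your proposal is correct and matches the paper's own proof, which likewise writes $k$ uniquely as $2n(\ell-1)+i$ with $\ell \ge 1$ and $1 \le i \le 2n$ and then invokes Lemma~\ref{lemLambda1}; you simply spell out the residue convention and both inclusions more explicitly. No gaps.
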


\begin{proof}
We may write $k$ uniquely as $2n(\ell - 1) + i$, where $\ell \geq 1$ and $1 \leq i \leq 2n$.  By Lemma~\ref{lemLambda1}, elements of $\mathscr{S}_{2n}^k$ correspond to balanced flush abaci where the largest active beads are located at level $\ell$ of the $i^{\text{th}}$ runner. 
\end{proof}

\begin{prop}\label{imageContainedCore}
The image of $\Phi_n^k$ is a subset of the set $\mathscr{S}_{2n-2}^{\leq \left(k - \left\lceil \frac{k}{n} \right\rceil \right)}$ of $(2n-2)$-cores with first part at most $k - \left\lceil \frac{k}{n} \right\rceil$.
\end{prop}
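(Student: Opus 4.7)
My plan is to transfer the statement to abaci via the commutative diagram \eqref{coreToAbacus} and then combine Corollary~\ref{corLambda1} applied to the input with Lemma~\ref{lemLambda1} applied to the image. Write $k = 2n(\ell - 1) + i$ with $\ell = \lceil k/(2n) \rceil$ and $1 \leq i \leq 2n$, and let $\mathfrak{a} = (a_1,\dots,a_n,-a_n,\dots,-a_1)$ be the abacus of an element of $\mathscr{S}_{2n}^{k}$. Corollary~\ref{corLambda1} identifies runner $i$ as the rightmost runner whose lowest bead sits at the maximum level $\ell$. By Proposition~\ref{delRowCol}, $\Phi_n$ deletes runners $i$ and $2n+1-i$, producing a balanced flush abacus $\mathfrak{a}'$ on $2n-2$ runners; then Lemma~\ref{lemLambda1} (applied in rank $n-1$) expresses the first part of $F_{\mathscr{S}}(\mathfrak{a}')$ as $(2n-2)(\ell'-1) + i'$, where $\ell'$ is the new maximum level and $i'$ is the rightmost runner of $\mathfrak{a}'$ achieving it.

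A short direct computation gives
\[
k - \lceil k/n \rceil \;=\; (2n-2)(\ell-1) + (i-\epsilon),
\]
where $\epsilon = 1$ if $1 \leq i \leq n$ and $\epsilon = 2$ if $n < i \leq 2n$. Thus the proposition reduces to the inequality $(2n-2)(\ell'-1)+i' \leq (2n-2)(\ell-1)+(i-\epsilon)$. If $\ell' < \ell$, the left side is at most $(2n-2)(\ell-1)$, and since $i \geq \epsilon$ in both ranges (trivially for $\epsilon=1$, and from $i > n \geq 2$ for $\epsilon=2$), the inequality is immediate.

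The remaining work is the case $\ell' = \ell$, which I would handle by short position-counting in each range of $i$. When $i \leq n$, the rightmost-maximum hypothesis forces $a_j < \ell$ for $j > i$ and $a_j > -\ell$ for all $j$, so $\ell$ can only survive in $\mathfrak{a}'$ via some earlier $a_{j^*} = \ell$ with $j^* < i$, giving $i' = j^* \leq i-1$. When $i > n$, write $j^* = 2n+1-i$ so that $a_{j^*} = -\ell$; the level $\ell$ then survives either through a later $a_{j'_0} = -\ell$ with $j'_0 > j^*$, in which case the new position is $2n-j'_0 \leq 2n-1-j^* = i-2$, or through some $a_{j^{**}} = \ell$, in which case $i'$ is either $j^{**}$ or $j^{**}-1$, both bounded by $2n-1-j^*$ using $j^{**}, j^* \leq n$.

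The main obstacle is the last case, $i > n$ with $\ell' = \ell$, since two distinct mechanisms in $\mathfrak{a}$ (a second entry equal to $-\ell$ further right in the base window, or an entry equal to $\ell$) can each produce the new maximum; one must carefully track how removing runners $i$ and $2n+1-i$ renumbers the surviving runners in order to compare the resulting position against $i-2$.
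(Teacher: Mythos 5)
Your argument is correct, but it takes a genuinely different (and longer) route than the paper's. The paper's proof is a single gap count: the first part of a core equals the number of gaps preceding the largest active bead, and the symmetric runner $2n+1-i$ contributes exactly $\left\lceil \frac{k}{n}\right\rceil$ such gaps ($2\ell-1$ of them when $i\le n$, and $2\ell$ when $i>n$), so deleting runners $i$ and $2n+1-i$ drops that count by at least $\left\lceil \frac{k}{n}\right\rceil$. You instead apply the first-part formula of Lemma~\ref{lemLambda1} on both sides of the diagram~\eqref{coreToAbacus} and bound the position of the new largest bead directly, splitting on whether the maximal level drops and, if not, on which surviving runner realizes it after renumbering. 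Your identity $k-\left\lceil \frac{k}{n}\right\rceil=(2n-2)(\ell-1)+(i-\epsilon)$ is right, and the position bounds in each case check out; the one point worth making explicit in the last case is that $j^{**}\ne j^{*}$ (automatic since $a_{j^*}=-\ell$ while $a_{j^{**}}=\ell$ and $\ell\ge 1$), which is what actually yields $j^{**}+j^{*}\le 2n-1$ and hence the bound by $i-2$. The trade-off: the paper's count is shorter and needs no renumbering bookkeeping, while your version pins down exactly where the new largest bead can land, which makes fully explicit the ``decreases by at least'' step that the paper leaves somewhat implicit.
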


\begin{proof}
To show that the image of the map $\Phi_n^k$ is contained in $\mathscr{S}_{2n-2}^{\leq k - \left\lceil \frac{k}{n} \right\rceil}$, notice that the number of gaps in runner $2n+1 - i$ which are smaller than the largest active bead (which is located in the $i^{\text{th}}$ runner) is equal to $\lceil \frac{k}{n} \rceil$.  Indeed, it is equal to $2\ell - 1$ if $i \equiv 1, \ldots, n \pmod{2n}$, and $2 \ell$ if $i \equiv n+1, \ldots, 2n \pmod{2n}$.  When we apply the map $\Phi_n^k$ to this abacus, the number of gaps that are less than the first active bead decreases by at least $\left\lceil \frac{k}{n} \right\rceil$.  Hence, the image of $\Phi_n^k$ is a subset of $\mathscr{S}_{2n-2}^{\leq k - \left\lceil\frac{k}{n} \right\rceil }$.  
\end{proof}

\subsection{The Map $\Phi_n$ on Coroot Lattice Points}\label{MapCoroot}

Recall that there is a bijective correspondence between elements in $\widetilde{C}_n/C_n$ and coroot lattice points of the form $(a_1, \ldots, a_n) \in \mathbb{Z}^n$.  By Theorem \ref{T:corootcoords}, the coroot lattice point $(a_1, \ldots, a_n)$ corresponds to the abacus on $2n$ runners with the largest active bead for each 
runner located at levels $(a_1, \ldots, a_n, -a_n, \ldots, -a_1)$.

Fix an integer $k > 0$.  Define
\begin{equation}\label{E:ell} \ell_1 := k \pmod{n} \quad \text{and} \quad \ell_2 := k \pmod{2n},
\end{equation}
 where $1 \leq \ell_1 \leq n$ and $1 \leq \ell_2 \leq 2n$.   Let ${H_{n}^k}$ denote the affine hyperplane
\begin{equation} \label{E:Hkn}
H_n^k =  
\begin{cases}
\left\{ v \in \mathbb{R}^n: \langle v, \varepsilon_{\ell_1} \rangle = \left\lceil \frac{k}{2n} \right\rceil \right\} & \quad \text{if } \ell_2 \in \{ 1, \ldots, n\} \\ 
\left\{v \in \mathbb{R}^n: \langle v, \varepsilon_{n - \ell_1 +1} \rangle = - \left\lceil \frac{k}{2n} \right\rceil  \right\} & \quad \text{if } \ell_2 \in \{ n+1, \ldots, 2n\} .
\end{cases} 
\end{equation}
Note that all of the points on $H_n^k$ have the same fixed $\ell_1^{\text{th}}$ or $(n-\ell_1+1)^{\text{th}}$ coordinate. In particular, for the first case, the $\ell_1^{\text{th}}$ coordinate is $ \left\lceil\frac{k}{2n}\right\rceil$, and for the second case, it equals $ -\left\lceil\frac{k}{2n}\right\rceil$.  The affine hyperplanes $H^k_n$ for $\widetilde{C}_2/C_2$ are shown in Figure \ref{figHyperplanes}.

\begin{figure}[hpt]
\begin{center}
\includegraphics[scale = .8]{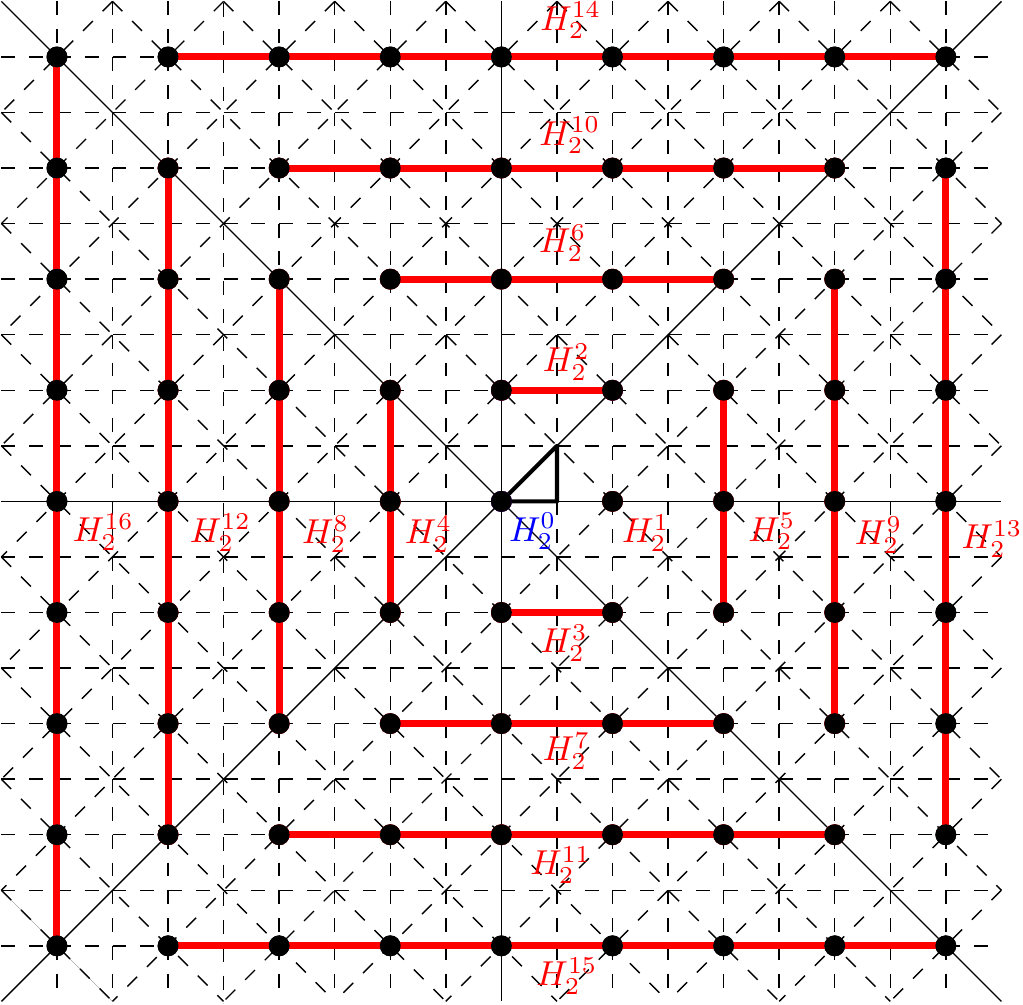}
\caption{The hyperplanes $H^k_2$.}
\label{figHyperplanes}
\end{center}
\end{figure}

\begin{prop} [Elements of $\mathscr{S}_{2n}^k$ lie on a hyperplane]\label{domainHyperplane}
Under the correspondence between symmetric $(2n)$-cores and coroot lattice points, we have the following: 
\begin{enumerate}
\item[(i)] If $\ell_2 \in \{ 1, \ldots, n\}$, then the symmetric $(2n)$-cores $\lambda$ with $\lambda_1 = k$ correspond to the lattice points $(a_1, \ldots, a_n) \in H_n^k \cap \mathbb{Z}^n$ subject to the conditions 
\begin{eqnarray}
- \left\lceil \frac{k}{2n} \right\rceil < a_i \leq \left\lceil \frac{k}{2n} \right\rceil, &&  i \in [1, \ell_1 - 1],  \nonumber \\ 
- \left\lceil \frac{k}{2n} \right\rceil < a_i < \left\lceil \frac{k}{2n} \right\rceil, && i \in [\ell_1 + 1, n].  \nonumber 
\end{eqnarray}

\item[(ii)] If $\ell_2 \in \{ n+1, \ldots, 2n \}$, then the symmetric $(2n)$-cores $\lambda$ with $\lambda_1 = k$ correspond to the lattice points $(a_1, \ldots, a_k) \in H_n^k \cap \mathbb{Z}^n$ subject to the conditions 
\begin{eqnarray}
-\left\lceil \frac{k}{2n} \right\rceil < a_i \leq \left\lceil \frac{k}{2n} \right\rceil, && i \in [1, n - \ell_1],  \nonumber \\ 
- \left\lceil \frac{k}{2n} \right\rceil \leq a_i \leq \left\lceil \frac{k}{2n} \right\rceil, && i \in [n - \ell_1 + 2, n]. \nonumber
\end{eqnarray}
\end{enumerate}

Under the correspondence between $(2n-2)$-cores and  lattice points in $\mathbb{Z}^{n-1}$, we have the following: 
\begin{enumerate}
\item[(iii)] If $\ell_2 \in \{ 1, \ldots, n\}$, then elements of $\mathscr{S}_{2n-2}^{\leq \left(k - \left\lceil \frac{k}{n} \right\rceil \right)}$ correspond to  lattice points $(a_1, \ldots, a_{n-1}) \in \mathbb{Z}^{n-1}$ subject to the conditions
\begin{eqnarray}
- \left\lceil \frac{k}{2n} \right\rceil < a_i \leq \left\lceil \frac{k}{2n} \right\rceil, &&  i \in [1, \ell_1 - 1],  \nonumber \\ 
- \left\lceil \frac{k}{2n} \right\rceil < a_i < \left\lceil \frac{k}{2n} \right\rceil, && i \in [\ell_1, n-1].  \nonumber
\end{eqnarray}

\item[(iv)] If $\ell_2 \in \{ n+1, \ldots, 2n\}$, then elements of $\mathscr{S}_{2n-2}^{\leq \left(k - \left\lceil \frac{k}{n} \right\rceil \right)}$ correspond to  lattice points $(a_1, \ldots, a_{n-1}) \linebreak \in \mathbb{Z}^{n-1}$ subject to the conditions
\begin{eqnarray}
-\left\lceil \frac{k}{2n} \right\rceil < a_i \leq \left\lceil \frac{k}{2n} \right\rceil, && i \in [1, n - \ell_1],  \nonumber \\ 
- \left\lceil \frac{k}{2n} \right\rceil \leq a_i \leq \left\lceil \frac{k}{2n} \right\rceil, && i \in [n - \ell_1 + 1, n]. \nonumber
\end{eqnarray}
\end{enumerate}
\end{prop}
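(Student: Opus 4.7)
The plan is to prove all four parts by translating the relevant combinatorial conditions---on the location of the largest bead for parts (i)--(ii), and on the value of the first part for parts (iii)--(iv)---into inequalities on the entries of the coroot lattice point. Throughout, I use that the abacus $\mathfrak{a}$ attached to the coroot lattice point $(a_1,\dots,a_n)$ has level sequence $(a_1,\dots,a_n,-a_n,\dots,-a_1)$, by definition and Theorem~\ref{T:corootcoords}, and I abbreviate $M:=\left\lceil\frac{k}{2n}\right\rceil$.

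For parts (i) and (ii), I invoke Corollary~\ref{corLambda1}: the cores in $\mathscr{S}_{2n}^k$ correspond exactly to abaci whose rightmost runner achieving the maximum level $M$ is runner $\ell_2$. This yields three conditions on the level sequence: the entry at position $\ell_2$ equals $M$ (giving the hyperplane equation), every entry weakly to the left of $\ell_2$ is at most $M$, and every entry strictly to the right of $\ell_2$ is strictly less than $M$. In case (i), when $\ell_2\leq n$, the distinguished position is $\ell_1=\ell_2$ in the first half, so $a_{\ell_1}=M$; the weak inequalities at positions $1,\dots,\ell_1-1$ read $a_i\leq M$; the strict inequalities at positions $\ell_1+1,\dots,n$ read $a_i<M$; and the strict inequalities across the mirrored second half force $-a_j<M$, i.e.\ $a_j>-M$, for every $j$. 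In case (ii), when $\ell_2>n$, the rightmost max falls in the second half at position $n+\ell_1$ (with the convention $\ell_1=n$ when $\ell_2=2n$), so $-a_{n-\ell_1+1}=M$; the weak inequalities give $a_i\leq M$ for $i\in[1,n]$ and $-a_j\leq M$, i.e.\ $a_j\geq -M$, for $j\in[n-\ell_1+1,n]$, while the strict inequalities to the right give $a_j>-M$ for $j\in[1,n-\ell_1]$. Collecting these recovers precisely the inequality systems in (i) and (ii).

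For parts (iii) and (iv), I begin with Lemma~\ref{lemLambda1}, which gives the first part of the $(2n-2)$-core from an abacus with largest bead at level $\ell'$ and rightmost max runner $i'$ as $\mu_1=2(n-1)(\ell'-1)+i'$. A short computation shows $\lceil k/n\rceil=2M-1$ in case (iii) (where $\ell_2\leq n$) and $\lceil k/n\rceil=2M$ in case (iv), so that $K:=k-\lceil k/n\rceil$ equals $2(n-1)(M-1)+(\ell_1-1)$ and $2(n-1)(M-1)+(n+\ell_1-2)$, respectively. The condition $\mu_1\leq K$ is therefore equivalent to requiring $\ell'\leq M$, and, when $\ell'=M$, that the rightmost max position lie at or to the left of position $\ell_1-1$ (case (iii)) or $n+\ell_1-2$ (case (iv)) in the length-$2(n-1)$ level sequence. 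Converting these threshold conditions into inequalities on the $a_i$ by the same half-versus-mirror-half bookkeeping as in (i) and (ii) yields the claimed constraints.

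The main obstacle will be the careful bookkeeping of positions in the level sequence and their correspondence with the entries $a_i$ versus $-a_i$, together with tracking which inequalities remain strict and which become weak as the rightmost max position is crossed. A subtle boundary case arises when $\ell_2=2n$ (equivalently $k\equiv 0\pmod{n}$) in parts (ii) and (iv), where the convention $1\leq\ell_1\leq n$ forces $\ell_1=n$ rather than $\ell_1=0$; verifying that all four inequality systems remain consistent in this boundary case is a small but necessary check.
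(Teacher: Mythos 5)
Your proposal is correct and follows essentially the same route as the paper: parts (i)--(ii) are read off from Corollary~\ref{corLambda1} by translating the ``rightmost runner at the maximal level'' condition into weak/strict bounds on the mirrored level sequence, and parts (iii)--(iv) come from Lemma~\ref{lemLambda1} together with the computation $k-\lceil k/n\rceil=(2n-2)\left(\left\lceil\frac{k}{2n}\right\rceil-1\right)+\ell_1-1$ (resp.\ $+\,n+\ell_1-2$). You simply spell out the half-versus-mirrored-half bookkeeping and the $\ell_2=2n$ boundary case in more detail than the paper does.
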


\begin{proof}
The claims in cases ($i$) and ($ii$) follow from Corollary~\ref{corLambda1}.

We will prove the claim for case ($iii$), in which $1 \leq \ell_2 \leq n$.  The coroot lattice points satisfying condition ($iii$) above correspond to all balanced flush abaci whose highest active beads are no higher than the bead located at level $\left\lceil \frac{k}{2n} \right \rceil$ of runner $\ell_1 - 1$.  

Since $1 \leq \ell_2 \leq n$, we may write $k = 2n \ell + \ell_1$.  It follows from Lemma~\ref{lemLambda1} that the first part of the core partition is at most 
\begin{eqnarray}
(2n-2) \left(\left\lceil \frac{k}{2n}-1\right \rceil \right) + \ell_1 -1 &=& (2n-2)\ell + \ell_1 - 1 \nonumber \\ 
&=& (2n \ell + \ell_1) - (2 \ell + 1) \nonumber \\ 
&=& k - \left\lceil \frac{k}{n} \right\rceil,  \nonumber
\end{eqnarray}
as desired.  Case ($iv$) is proved analogously. 
\end{proof}

Denote the set of coroot lattice points corresponding to the core partitions $\mathscr{S}_{2n}^k$ by ${\mathscr{R}}_{2n}^k$, and the set of coroot lattice points corresponding to the core partitions $\mathscr{S}_{2n-2}^{\leq \left(k - \left\lceil \frac{k}{n} \right\rceil \right)}$ by $\mathscr{R}_{2n-2}^{\leq \left(k - \left\lceil \frac{k}{n} \right\rceil \right)}$.  We are now ready to prove our first main theorem, which appears as Theorem A in the introduction. 

\begin{thm}\label{MainTheorem1}
The map 
$$\Phi_n^k: \mathscr{S}_{2n}^k \to \mathscr{S}_{2n-2}^{\leq \left(k - \left\lceil \frac{k}{n} \right\rceil \right)}$$ 
is a bijection.
\end{thm}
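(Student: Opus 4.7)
The plan is to work entirely in the coroot lattice point model and exhibit an explicit two-sided inverse. By the commutative diagram in Section~\ref{sec:summary} together with Theorem~\ref{T:corootcoords}, it suffices to show that the induced map on coroot lattice points
$$\Phi_n^k : \mathscr{R}_{2n}^k \longrightarrow \mathscr{R}_{2n-2}^{\leq \left(k - \left\lceil k/n \right\rceil\right)}$$
is a bijection, where the two sets have the explicit descriptions provided by Proposition~\ref{domainHyperplane}.

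First I would translate the action of $\Phi_n$ on abaci (``delete the rightmost largest runner together with its symmetric runner'') into an action on coroot coordinates. Writing an abacus in $\mathscr{A}_{2n}$ in the form $(a_1, \dots, a_n, -a_n, \dots, -a_1)$ and recalling from Corollary~\ref{corLambda1} that the largest bead of an element of $F_{\mathscr{A}}(\mathscr{S}_{2n}^k)$ sits at level $\lceil k/(2n) \rceil$ of runner $\ell_2 = k \pmod{2n}$, a short check shows that when $\ell_2 \in \{1, \dots, n\}$ the rightmost largest runner is runner $\ell_1$ itself and its symmetric mate is runner $2n+1-\ell_1$, while in case $\ell_2 \in \{n+1, \dots, 2n\}$ these two runners are $n+\ell_1$ and $n+1-\ell_1$. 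In both cases $\Phi_n$ deletes a single prescribed coordinate of $(a_1, \dots, a_n) \in \mathbb{Z}^n$: the $\ell_1^{\text{th}}$ in case (i) and the $(n-\ell_1+1)^{\text{th}}$ in case (ii), whose value is forced to be $+\lceil k/(2n)\rceil$ or $-\lceil k/(2n)\rceil$, respectively.

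Next I would write down an explicit inverse $\Psi_n^k$. Given $(b_1, \dots, b_{n-1}) \in \mathscr{R}_{2n-2}^{\leq (k - \lceil k/n \rceil)}$, define $\Psi_n^k(b) \in \mathbb{Z}^n$ by inserting $+\lceil k/(2n) \rceil$ at position $\ell_1$ (if $\ell_2 \leq n$) or $-\lceil k/(2n) \rceil$ at position $n-\ell_1+1$ (if $\ell_2 > n$). A direct comparison of the inequality descriptions of the domain in parts (i)--(ii) and the codomain in parts (iii)--(iv) of Proposition~\ref{domainHyperplane} shows that $\Psi_n^k$ maps into $\mathscr{R}_{2n}^k$, and that $\Psi_n^k \circ \Phi_n^k$ and $\Phi_n^k \circ \Psi_n^k$ are both the identity.

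The main difficulty is purely bookkeeping: one must verify that the strict versus non-strict inequalities surrounding the distinguished coordinate match up correctly on the two sides, since these encode the requirement that the deleted runner be the \emph{rightmost} runner attaining the extremal level (coordinates to the left of the distinguished position are allowed to match the extremal value, while those strictly to the right are not). Once this is unpacked, bijectivity follows immediately from the deletion-insertion pair above.
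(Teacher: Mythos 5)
Your proposal is correct and follows essentially the same route as the paper: pass to coroot lattice points via the commutative diagram, observe that $\Phi_n^k$ deletes the single fixed coordinate (the $\ell_1^{\text{th}}$ or $(n-\ell_1+1)^{\text{th}}$, depending on $\ell_2$), and invert by reinserting $\pm\left\lceil \frac{k}{2n}\right\rceil$ at that position, with Proposition~\ref{domainHyperplane} supplying the matching of the inequality descriptions of domain and codomain. The only difference is that you make the strict-versus-weak inequality bookkeeping explicit where the paper simply cites the proposition.
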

\begin{proof}
Using the commutative diagram~\eqref{coreToAbacus} and the bijection $F_{\mathscr{R}}$ between abacus diagrams and coroot lattice points, the following diagram is commutative: 
\begin{eqnarray}\label{CoresToAbaci}
\xymatrix{
\mathscr{S}_{2n}^k \ar[r]^{F_{\mathscr{R}} \circ F_{\mathscr{A}} } \ar[d]^{\Phi_n^k} &{\mathscr{R}}_{2n}^k \ar[d]^{{\Phi}_n^k} \\ 
\mathscr{S}_{2n-2}^{\leq \left(k - \left\lceil \frac{k}{n} \right \rceil \right)} \ar[r]^{F_{\mathscr{R}} \circ F_{\mathscr{A}}} & {\mathscr{R}}_{2n-2}^{\leq \left(k - \left\lceil \frac{k}{n} \right\rceil \right)} 
}
\end{eqnarray}
In the commutative diagram above, $\Phi_n^k$ acts on the coroot lattice points in ${\mathscr{R}}_{2n}^k$ by deleting the $\ell_1^{\text{th}}$ coordinate if $1 \leq \ell_2 \leq n$, or deleting the $(n - \ell_1 + 1)^{\text{th}}$ coordinate if $n+1 \leq \ell_2 \leq n$. This is clearly an injection because in each case, the $\ell_1^{\text{th}}$ and $(n-\ell_1+1)^{\text{th}}$ coordinates are both fixed and redundant.

The map $\Phi_n^k$ is a bijection between ${\mathscr{R}}_{2n}^k$ and $\mathscr{R}_{2n-2}^{\leq \left(k - \left\lceil \frac{k}{n} \right\rceil \right)}$ because the inverse is defined by inserting $\left \lceil \frac{k}{2n} \right \rceil$ at position $\ell_1$ if $1 \leq \ell_2 \leq n$, or $- \left \lceil \frac{k}{2n} \right \rceil$ at position $n - \ell_1 + 1$ if $n+1 \leq \ell_2 \leq 2n$.  Since the horizontal arrows in the commutative diagram are also bijections, the map $\Phi_n^k: \mathscr{S}_{2n}^k \to \mathscr{S}_{2n-2}^{\leq \left(k - \left\lceil \frac{k}{n} \right\rceil \right)}$ is a bijection, as desired.
\end{proof}


\section{Geometric Interpretation of the map $\Phi_n^k$} \label{sectionGeometric}

In this section we describe a geometric interpretation of $\Phi_n^k$ on $\widetilde{C}_n/C_n$ using the alcove model. Recall the correspondence between $\C$ and alcoves in $\mathbb{R}^n$ discussed in Section \ref{sectionWeylGroups} given by sending a word $w\in\C$ to the alcove $w(\mathcal{A}_\circ)$.  

For the duration of this paper we fix two non-negative integers $n$ and $k$.  Moreover, whenever we are dealing with an object which is naturally indexed by $n$ and $k$, even if no label is explicitly mentioned, we will assume this fixed labeling.

Recall from \eqref{E:ell} that we have defined $\ell_2 := k \pmod{2n},$ where $1 \leq \ell_2 \leq 2n$, and that the definition of $H^k_n$ depends on the value of $\ell_2$. Throughout this section we shall only consider the case when $\ell_2\in\{1,\ldots,n\}$ since the proof for when $\ell_2\in\{n+1,\ldots,2n\}$ is completely analogous. Moreover, for ease of noations, $\ell_1 = k\pmod{n}$ will be denoted by $\ell$ for the rest of this section.

We begin by noting that we can naturally identify $H^k_n$ with $\mathbb{R}^{n-1}$ via the $\{\varepsilon_i:i\ne \ell\}$ basis where $\{\varepsilon_1,\ldots,\varepsilon_n\}$ is the standard orthonormal basis for $\mathbb{R}^n$. We define the map $\pi:\mathbb{R}^n\to H_n^k$ to be the projection of $\mathbb{R}^n$ onto $H_n^k$. Analytically, this is given by
\begin{equation}\label{defpi}
\pi(v)=\sum_{j\ne \ell}\langle v,\varepsilon_j\rangle \varepsilon_j+ \left\lceil\frac{k}{2n}\right\rceil\varepsilon_{\ell}.
\end{equation}

Suppose that
$\mathcal{A}_\circ=\mathcal{A}^1\to\cdots\to\mathcal{A}^r=\mathcal{A}_w$
is a minimal length alcove walk from $\mathcal{A}_\circ$ to $\mathcal{A}_w$. Now we aim to show that 
$\pi(\mathcal{A}^1)\to\cdots\to\pi(\mathcal{A}^r)$
is an alcove walk for $\mathcal{A}_{\Phi_n^k(w)}$, and that if one removes all repeated instances of alcoves in the projected walk, then the resulting walk is minimal. We begin by showing that  $\pi(\mathcal{A}^1)\to\cdots\to\pi(\mathcal{A}^r)$
is indeed an alcove walk for $\Phi_n^k(w)$. It is sufficient to show that the image of an alcove $\mathcal{A}\subseteq\mathbb{R}^n$ under $\pi$ is an alcove when we identify $H_n^k$ with $\mathbb{R}^{n-1}$.

\begin{lem}\label{alcove}
The image of an alcove under $\pi$ is an alcove via the identification of $H_n^k$ with $\mathbb{R}^{n-1}$.  
\end{lem}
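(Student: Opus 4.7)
The plan is to show that $\pi(\mathcal{A})$ is contained in a single open alcove $\mathcal{A}'$ of $\widetilde{C}_{n-1}$ and then that equality holds. Under the identification $H_n^k\cong\mathbb{R}^{n-1}$ via the basis $\{\varepsilon_j:j\ne\ell\}$, the projection $\pi$ is simply the map that drops the $\ell$-th coordinate.

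For the containment step, I would observe that any positive root $\beta$ of $C_{n-1}$, viewed as a vector in $\mathbb{R}^n$ with vanishing $\varepsilon_\ell$-component, is also a positive root of $C_n$. Consequently, the preimage $\pi^{-1}(H_{\beta,m})$ of the $\widetilde{C}_{n-1}$-hyperplane coincides with the $\widetilde{C}_n$-hyperplane defined by the same equation in $\mathbb{R}^n$, which $\mathcal{A}$ cannot cross. Thus $\pi(\mathcal{A})$ cannot cross any $\widetilde{C}_{n-1}$-hyperplane, and being a nonempty open convex subset of $\mathbb{R}^{n-1}$, it lies inside one open alcove $\mathcal{A}'$.

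For equality, I would analyze the simplicial structure of $\pi(\mathcal{A})$. Writing $\mathcal{A}=w\mathcal{A}_\circ$ with $w_0$ the signed-permutation linear part of $w$, a short case check on $i_0:=w_0^{-1}(\ell)$ (splitting into $i_0=1$, $2\le i_0\le n-1$, or $i_0=n$) shows that exactly two of the $n+1$ walls of $\mathcal{A}$ have normals involving $\varepsilon_\ell$; call these $W_a,W_b$ with normals $\alpha_a,\alpha_b$. The other $n-1$ walls are parallel to $\varepsilon_\ell$, so their common intersection is a one-dimensional edge in the $\varepsilon_\ell$-direction---the \emph{collapsed edge}---whose two endpoints are vertices of $\mathcal{A}$ projecting to the same point, while the remaining $n-1$ vertices project to distinct points. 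Hence $\pi(\mathcal{A})$ is the interior of an $(n-1)$-simplex, with $n-1$ of its codimension-one faces lying on the $\widetilde{C}_{n-1}$-hyperplanes containing the non-$\varepsilon_\ell$ walls.

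The main obstacle is the remaining ``missing'' face $\pi(W_a\cap W_b)$. Eliminating $\lambda_\ell$ from the pair of equations defining $W_a\cap W_b$ produces a single equation $\langle\gamma,\bar\lambda\rangle=m$, where $\gamma=\alpha_b-(\alpha_b^{(\ell)}/\alpha_a^{(\ell)})\alpha_a$ has zero $\varepsilon_\ell$-component. Direct computation in each of the three cases from the previous paragraph yields $\gamma=\pm\varepsilon_p$ (if $i_0=1$), $\gamma=\pm(\varepsilon_p\pm\varepsilon_q)$ with $p\ne q$ (if $2\le i_0\le n-1$), or $\gamma=\pm 2\varepsilon_p$ (if $i_0=n$), where $p,q\ne\ell$, with $m\in\tfrac{1}{2}\mathbb{Z}$ in each case; so $\langle\gamma,\bar\lambda\rangle=m$ is always a hyperplane of $\widetilde{C}_{n-1}$. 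With all $n$ walls of the $(n-1)$-simplex $\pi(\mathcal{A})$ lying on $\widetilde{C}_{n-1}$-hyperplanes, any strict containment $\pi(\mathcal{A})\subsetneq\mathcal{A}'$ would force a wall of $\pi(\mathcal{A})$ to cut through the interior of $\mathcal{A}'$, contradicting that $\mathcal{A}'$ is a connected component of the arrangement complement. Therefore $\pi(\mathcal{A})=\mathcal{A}'$.
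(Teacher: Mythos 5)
Your proof is correct, but it takes a genuinely different route from the paper's. The paper's argument is a direct coordinate computation: every alcove is an integer translate of an alcove in the fundamental region, and the latter are exactly the sets $\{(k_1a_{\sigma(1)},\ldots,k_na_{\sigma(n)}) : \tfrac{1}{2}>a_1>\cdots>a_n>0\}$ for a sign vector $(k_i)$ and a permutation $\sigma$; deleting the $\ell$-th coordinate visibly reproduces the same description one rank lower, and the translation step is absorbed because $\pi$ of an integer translate is an integer translate of $\pi$. You never write the alcoves down explicitly. Instead you obtain the containment $\pi(\mathcal{A})\subseteq\mathcal{A}'$ by pulling back the $\widetilde{C}_{n-1}$-hyperplanes along $\pi$ (using that roots of $C_{n-1}$ orthogonal to $\varepsilon_\ell$ are roots of $C_n$), and you obtain equality by exhibiting $\pi(\mathcal{A})$ as an open $(n-1)$-simplex all $n$ of whose facet hyperplanes belong to the $\widetilde{C}_{n-1}$ arrangement, the only nonobvious one being the eliminant $\gamma$ of the two walls whose normals involve $\varepsilon_\ell$; your case check on $i_0=w_0^{-1}(\ell)$ correctly produces $\pm\varepsilon_p$, $\pm(\varepsilon_p\pm\varepsilon_q)$, or $\pm 2\varepsilon_p$, and the connectedness argument then forces $\pi(\mathcal{A})=\mathcal{A}'$. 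One small point to tighten: the inference ``$m\in\tfrac{1}{2}\mathbb{Z}$, hence a hyperplane of the arrangement'' is only valid when $\gamma$ is proportional to $\varepsilon_p$ (where $\{\lambda_p=m\}=H_{2\varepsilon_p,2m}$ with $2m\in\mathbb{Z}$); in the case $\gamma=\pm(\varepsilon_p\pm\varepsilon_q)$ you need, and your elimination with coefficients $\mp 1$ does in fact give, an integer level $m=m_b\mp m_a$. The trade-off between the two proofs is clear: the paper's is shorter because the type-$C$ coordinates are so clean, while yours isolates exactly what must be verified in general (closure of the relevant root data under elimination of the $\varepsilon_\ell$-coordinate, together with integrality of the resulting levels), which is the form of the statement one would want for the type $B$ and $D$ generalizations the authors propose in Section \ref{sec:futurework}.
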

\begin{proof}
Fix an alcove $\mathcal{A}$.  For every positive root $\beta$, there exists a unique integer $k_{\beta}$ such that $\lambda \in \mathbb{R}^n$ lies in the interior of the alcove $\mathcal{A}$ if and only if for all $\beta$, the point $\lambda$ satisfies the inequalities $k_{\beta} < (\lambda, \beta) < k_{\beta}+1$; \textit{i.e.} $\lambda$ lies between the hyperplanes $H_{\beta, k_{\beta}}$ and $H_{\beta, k_{\beta} + 1}$.  

Let $\alpha_i$ denote the ordered basis of simple roots in type $\widetilde{C}_n$.  The fundamental alcove is the region bounded by the hyperplanes $H_{\alpha_i, 0}$ for $1 \leq i \leq n$, and $H_{\widetilde{\alpha}, 1}$.  In other words, the fundamental alcove consists of all points $\lambda \in \mathbb{R}^n$ such that $(\lambda, \alpha_i) > 0$ for $1 \leq i \leq n$ and $(\lambda, \widetilde{\alpha}) < 1$.  This is precisely all points $(a_1, \ldots, a_n)\in \R^n$ satisfying the inequality 
$$\frac{1}{2} > a_1 > a_2 > \cdots > a_n > 0.  $$

Now, the alcove $\mathcal{A}$ is a translation by integer coordinates of an alcove based at the origin in the fundamental region, as defined in Section \ref{sectionWeylGroups}.  To show that the image of $\mathcal{A}$ is an alcove, it suffices to show that the image of any alcove in the fundamental region is an alcove.  All the alcoves in the fundamental region are obtained from the fundamental alcove via a sequence of reflections across the hyperplanes $H_{\beta, 0}$, where $\beta \in \Phi^+$ is a positive root. 

Recalling that the positive roots in type $\widetilde{C}_n$ are $2 \varepsilon_i$ for $1 \leq i \leq n$, and $\varepsilon_i \pm \varepsilon_j$ for $1 \leq i < j \leq n$, there are thus three types of reflections to consider: 

\begin{enumerate}[label=(\roman{*})]
\item Reflecting across the hyperplane $H_{\varepsilon_i - \varepsilon_j, 0}$.  In this case, we switch the $i^{\text{th}}$ coordinate and the $j^{\text{th}}$ coordinate; \textit{i.e.}, 
$$(a_1, \ldots, a_i, \ldots, a_j, \ldots, a_n) \mapsto (a_1, \ldots, a_j, \ldots, a_i, \ldots, a_n).$$ 
\item Reflecting across the hyperplane $H_{\varepsilon_i + \varepsilon_j, 0}$.  In this case, we first switch the $i^{\text{th}}$ coordinate and the $j^{\text{th}}$ coordinate, then also change their signs; \textit{i.e.},
$$(a_1, \ldots, a_i, \ldots, a_j, \ldots, a_n) \mapsto (a_1, \ldots, -a_j, \ldots, -a_i, \ldots, a_n).$$
\item Reflecting across the hyperplane $H_{2\varepsilon_i, 0}$.  In this case, only the sign of the $i^{\text{th}}$ coordinate changes; \textit{i.e.}, 
$$(a_1, \ldots, a_i, \ldots, a_n) \mapsto (a_1, \ldots, -a_i, \ldots, a_n).$$
\end{enumerate}

It is easy to see from the three cases above that an alcove in the fundamental region consists of all points $(k_1a_{\sigma(1)}, \ldots, k_na_{\sigma(n)}) \in \mathbb{R}^n$ such that 
$\frac{1}{2} > a_1 > a_2 > \cdots > a_n > 0, $
where $k_i \in \{-1, 1\}$ for $1 \leq i \leq n$, and $\sigma$ is a permutation of $\{ 1, 2, \ldots, n \}$.  

Without loss of generality, we may assume $\ell = 1$, so that applying $\pi$ deletes the first coordinate. 
To finish the proof, we apply the projection $\pi$ and note that the image contains all points of the form $(k_2 a_{\sigma(2)}, \ldots, k_n a_{\sigma(n)}) \in \mathbb{R}^{n-1}$ such that 
$\frac{1}{2} > a_1 > a_2 > \cdots > a_n > 0. $
Since $a_{\sigma(1)}$ is gone, we can just delete it from the inequality.  It follows that the image is an alcove in type $\widetilde{C}_{n-1}/C_{n-1}$, as desired.
\end{proof}

\begin{rem}\label{inducedmap} By considering the image of an alcove under the projection $\pi$ from \eqref{defpi}, Lemma \ref{alcove} tells us that $\pi$ induces a map $\C\to\widetilde{C}_{n-1}$. By abuse of notation, we denote this induced map by $\pi$ as well.
\end{rem}

\begin{lem}\label{fundamentalDomain}
The image of a fundamental region under $\pi$ is a fundamental region via the identification of $H_n^k$ with $\mathbb{R}^{n-1}$. 
\end{lem}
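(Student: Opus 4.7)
The plan is to prove Lemma \ref{fundamentalDomain} by first giving an explicit description of the fundamental region as an axis-aligned unit cube in $\R^n$, and then noting that the projection $\pi$ from \eqref{defpi} simply deletes one coordinate under the identification $H_n^k \cong \R^{n-1}$, preserving the cube structure.

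First I would show that the fundamental region around the origin in $\widetilde{C}_n/C_n$ equals the closed cube $[-1/2, 1/2]^n$. Using the description given in the proof of Lemma \ref{alcove}, the alcoves adjacent to the origin are exactly the sets $\{(k_1 a_{\sigma(1)}, \ldots, k_n a_{\sigma(n)}) : 1/2 > a_1 > \cdots > a_n > 0\}$ as $(k, \sigma)$ ranges over $\{\pm 1\}^n \times S_n$. Since $C_n$ acts on $\R^n$ by signed permutations, these $|C_n| = 2^n n!$ signed permutations yield distinct alcoves, and every point in $(-1/2, 1/2)^n$ whose coordinates have distinct nonzero absolute values lies in a unique such alcove (determined by sorting the absolute values and recording the signs); taking closures then recovers $[-1/2, 1/2]^n$. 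A general fundamental region is a translate of this cube by a coroot lattice point $v = (v_1, \ldots, v_n) \in \Lambda_R^{\vee} = \Z^n$, namely $\prod_{i=1}^n [v_i - 1/2, v_i + 1/2]$.

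Next, under the standing assumption that $\ell_2 \in \{1, \ldots, n\}$, the projection $\pi$ replaces the $\ell$-th coordinate with $\lceil k/(2n) \rceil$ and, after identifying $H_n^k$ with $\R^{n-1}$ via the basis $\{\varepsilon_j : j \neq \ell\}$, simply drops that coordinate. Applied to the fundamental region above, the image is
\[
\prod_{i \neq \ell}[v_i - 1/2, v_i + 1/2],
\]
which is the cube $[-1/2, 1/2]^{n-1}$ translated by the point $(v_1, \ldots, \widehat{v_\ell}, \ldots, v_n) \in \Z^{n-1}$. Since the coroot lattice for $\widetilde{C}_{n-1}$ is $\Z^{n-1}$, this is a coroot lattice point in $\widetilde{C}_{n-1}/C_{n-1}$, and by applying Step 1 in rank $n-1$ the resulting cube is a fundamental region. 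The case $\ell_2 \in \{n+1, \ldots, 2n\}$ is entirely analogous, with $\pi$ deleting the $(n - \ell_1 + 1)$-st coordinate. The only real content lies in Step 1, identifying the fundamental region with a cube; once this is established the remainder amounts to an elementary projection of product sets.
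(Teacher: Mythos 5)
Your proposal is correct and follows essentially the same route as the paper: identify the fundamental region with the cube $[-1/2,1/2]^n$ (translated by a coroot lattice point), observe that $\pi$ deletes one coordinate under the identification of $H_n^k$ with $\R^{n-1}$, and conclude the image is the corresponding cube in rank $n-1$. The only difference is that you supply a justification for the cube description of the fundamental region (and treat general translates explicitly), whereas the paper simply asserts it.
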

\begin{proof}
In type $\widetilde{C}_n$, the fundamental region in $\mathbb{R}^{n}$ is the region containing the points $(a_1, \ldots, a_n)$ where $- \frac{1}{2} \leq a_i \leq \frac{1}{2}$ for all $1 \leq i \leq n$.  As in the proof of the previous lemma, without loss of generality, we assume $\ell = 1$ so that applying $\pi$ deletes the first coordinate.  The image of the fundamental region is $(a_2, \ldots, a_n)$, where $- \frac{1}{2} \leq a_i \leq \frac{1}{2}$ for all $2 \leq i \leq n$, which is the fundamental region in type $\widetilde{C}_{n-1}$. 
\end{proof}

\begin{df}
Denote by $\mathcal{A}_{\circ}'$ the distinguished alcove on $H^k_n$ whose coroot lattice point is $\left \lceil \frac{k}{2n} \right \rceil \varepsilon_{\ell}$.  The alcove $\mathcal{A}_{\circ}'$ gets identified with the fundamental alcove in $\widetilde{C}_{n-1}$ under the projection $\pi$, which makes it convenient to establish a separate notation identifying this alcove.
\end{df}


\begin{lem}\label{lemming}
Exactly $k$ hyperplanes separate the fundamental alcove $\mathcal{A}_\circ$ and the alcove $\mathcal{A}_{\circ}'$.
\end{lem}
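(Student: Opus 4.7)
The plan is to use the characterization from Section~\ref{sectionWeylGroups} that the number of hyperplanes separating $\mathcal{A}_{\circ}$ from $\mathcal{A}_{\circ}'$ equals the length of the corresponding element of $\C/C_n$, and then to count these separating hyperplanes root by root. Let $\mu = \left\lceil \frac{k}{2n}\right\rceil\varepsilon_\ell$ be the coroot lattice vertex of $\mathcal{A}_{\circ}'$, write $m = \left\lceil \frac{k}{2n}\right\rceil$, and set $c_\beta := \langle \mu, \beta\rangle$ for each $\beta \in \Phi^+$.

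First, I would explicitly describe $\mathcal{A}_{\circ}'$. Since $\mu$ lies in the coroot lattice, the hyperplanes in $\mathcal{H}$ that pass through $\mu$ are exactly $\{H_{\beta, c_\beta} : \beta \in \Phi^+\}$, and the $|W|$ alcoves adjacent to $\mu$ are parametrized by a choice of side for each such hyperplane. Because $\mathcal{A}_{\circ}'$ has minimal length in its coset, it must lie on the same side of every $H_{\beta, c_\beta}$ as $\mathcal{A}_{\circ}$; otherwise the alcove at $\mu$ obtained by flipping the offending side would be separated from $\mathcal{A}_{\circ}$ by one fewer hyperplane. Since every point of $\mathcal{A}_{\circ}$ satisfies $\langle p, \beta\rangle \in (0,1)$ for every $\beta \in \Phi^+$, the interior of $\mathcal{A}_{\circ}'$ therefore satisfies $\langle p, \beta\rangle \in (c_\beta - 1, c_\beta)$ when $c_\beta > 0$, $\langle p, \beta\rangle \in (c_\beta, c_\beta + 1)$ when $c_\beta < 0$, and $\langle p, \beta\rangle \in (0,1)$ when $c_\beta = 0$.

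Second, the number of integer levels $j$ lying strictly between the two $\beta$-value intervals counts the hyperplanes $H_{\beta, j}$ separating $\mathcal{A}_{\circ}$ from $\mathcal{A}_{\circ}'$: this is $c_\beta - 1$ if $c_\beta > 0$, $|c_\beta|$ if $c_\beta < 0$, and $0$ if $c_\beta = 0$. Summing over the positive roots of $C_n$ with $\mu = m\varepsilon_\ell$: the $n - \ell$ roots $\varepsilon_\ell - \varepsilon_j$ with $j > \ell$ each contribute $m - 1$; the $\ell - 1$ roots $\varepsilon_i - \varepsilon_\ell$ with $i < \ell$ each contribute $m$; the $n - 1$ roots $\varepsilon_i + \varepsilon_j$ involving $\varepsilon_\ell$ each contribute $m - 1$; the root $2\varepsilon_\ell$ contributes $2m - 1$; and all remaining positive roots contribute $0$. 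This yields
\[ (n-\ell)(m-1) + (\ell-1)m + (n-1)(m-1) + (2m-1) = 2nm - 2n + \ell = k, \]
using $k = 2n(m-1) + \ell$ in the case $\ell_2 \in \{1, \ldots, n\}$.

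The main obstacle will be the first step: rigorously justifying that the prescribed combination of sides (agreeing with $\mathcal{A}_{\circ}$ on every hyperplane through $\mu$) is geometrically consistent, i.e., determines an actual nonempty alcove at $\mu$. This nonemptiness is a general fact about the affine arrangement of a Weyl group, and once established, the ``flip one side to decrease length by one'' argument singles out $\mathcal{A}_{\circ}'$ uniquely as the minimal-length alcove adjacent to $\mu$.
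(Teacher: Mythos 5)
Your proof is correct, but it takes a genuinely different route from the paper's. The paper works entirely in the abacus model: it writes down the balanced flush abacus attached to the coroot lattice point $\left\lceil\frac{k}{2n}\right\rceil\varepsilon_{\ell}$ (lowest beads at levels $\pm\left\lceil\frac{k}{2n}\right\rceil$ on runners $\ell$ and $N-\ell$ and at level $0$ elsewhere), counts the relevant gaps and beads, and invokes the length formula of Corollary~8.1 in \cite{HanusaJones} to conclude that $\ell(w)=k$ for the word $w$ with $\mathcal{A}_{\circ}'=w\mathcal{A}_{\circ}$; the hyperplane count then follows from $\ell(w)$ being the number of separating hyperplanes. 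You instead count the separating hyperplanes directly, one positive root at a time, after first pinning down $\mathcal{A}_{\circ}'$ as the unique alcove at $\mu=\left\lceil\frac{k}{2n}\right\rceil\varepsilon_{\ell}$ lying on the same side as $\mathcal{A}_{\circ}$ of every hyperplane through $\mu$. Your per-root counts ($c_\beta-1$, $|c_\beta|$, or $0$) and the final sum $2nm-2n+\ell=k$ check out. The one step you rightly flag --- that the all-signs-agreeing chamber at $\mu$ is nonempty and is the unique length-minimizing choice --- is a standard fact: any interior point of $\mathcal{A}_{\circ}$ selects a chamber of the central arrangement $\{H_{\beta,c_\beta}\}$ at $\mu$, that chamber contains exactly one alcove with vertex $\mu$, and flipping sides for any subset of roots can only add separating hyperplanes. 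The trade-off: the paper's argument is shorter because it outsources the counting to existing combinatorial machinery specific to the type-$C$ abacus, whereas yours is self-contained, visibly geometric, and essentially type-free --- it would transfer verbatim to the analogous statement for other root systems, which is in the spirit of the generalizations the authors propose in Section~\ref{sec:futurework}.
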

\begin{proof}

Recall that the coroot lattice point associated with $\mathcal{A}_\circ'$ is $\left\lceil\frac{k}{2n}\right\rceil\varepsilon_{\ell}$.  The abacus corresponding to this point has the lowest bead at level $\K$ in runner $\ell$ , at level $-\K$ in runner $N-\ell$ (where $N = 2n+1$, as in Definition~\ref{mirroredPermutation}), and at level zero elsewhere. We see that the value of the lowest bead $B$ in runner $\ell$ is $N\K+\ell-1$. Furthermore, there exists a unique bead $b$ on runner $\ell$ whose value lies in the range $[n+1,N+n]$. Note that the value labeling $b$ is $\ell$ if $\ell\geqslant n+1$ and $\ell+N$ otherwise. If $g$ denotes the number of gaps between $B$ and $b$, and $p$ denotes the number of beads whose values are greater than $N+n$, then
\begin{equation}
g=\begin{cases}(2n-1)\left(\K-1\right)+\ell-1 & \mbox{if}\quad \ell\geqslant n+1,\\ (2n-1)\left(\K-1\right) & \mbox{if}\quad \ell<n+1,\end{cases}
\end{equation}
and
\begin{equation}
p=\begin{cases}\K & \mbox{if}\quad \ell\geqslant n+1,\\ \K-1 & \mbox{if}\quad \ell<n+1.\end{cases}
\end{equation}

Using Corollary 8.1 in \cite{HanusaJones}, we know then that the length of the word corresponding to $\mathcal{A}_\circ'$ is 
\begin{equation}
\begin{cases}g+p=\left(2n-1\right)\left(\K-1\right)+\ell-1+\K & \mbox{if}\quad \ell\geqslant n+1,\\ g+p+(b-N)=\left(2n-1\right)\left(\K-1\right)+\K-1+(\ell+N)-N & \mbox{if}\quad \ell<n+1.\end{cases}
\end{equation}
Both of these are equal to $k$ because by the definition of $\ell$, we have $k=2n(\K-1)+\ell$. From this we conclude that any minimal length walk from $\mathcal{A}_\circ$ to $\mathcal{A}_\circ'$ passes through precisely $k$ hyperplanes. 
\end{proof}

We now develop several definitions which will be critical to the remaining geometric arguments in this section.  We will have occasion to separate out steps in an alcove walk which are parallel to a fixed hyperplane $H^k_n$ because these are the steps which survive after applying the projection $\pi$.

\begin{df}\label{perpstep} Given a pair of roots $\alpha$ and $\beta$ and two non-negative integers $m$ and $m'$, we call two affine hyperplanes $H_{\alpha,m}$ and $H_{\beta,m'}$ \textit{parallel} (resp. \textit{perpendicular}) if the roots $\alpha$ and $\beta$ are parallel (resp. perpendicular). Given a fixed hyperplane $H^k_n$, we call a step from an alcove $\mathcal{A}$ to an alcove $\mathcal{A}'$ in an alcove walk  \textit{perpendicular} if it is achieved by reflecting $\mathcal{A}$ across a hyperplane which is \emph{not} perpendicular to $H_n^k$. (The reason for this terminology is clear as illustrated by Figure~\ref{figPerpStep}.)  A step that is not perpendicular shall be called \textit{parallel}.  An alcove walk which consists entirely of perpendicular (resp. parallel) steps will be called a \emph{perpendicular walk} (resp. \emph{parallel walk}).
\end{df}

\begin{figure}
\begin{center}
\includegraphics[scale = .9]{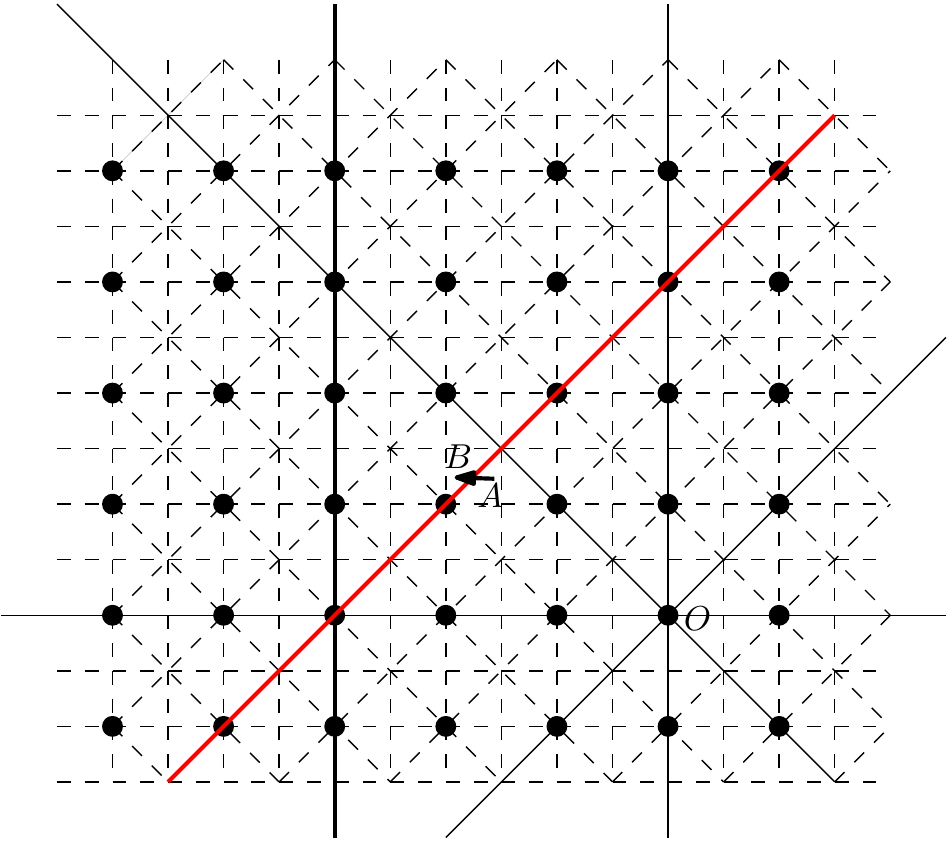}
\caption{A perpendicular step (black arrow) is obtained by reflecting over a hyperplane (red line) not perpendicular to $H_n^k$.}
\label{figPerpStep}
\end{center}
\end{figure}

Next, we will identify a certain subset of alcoves along the hyperplane $H^k_n$ which can be naturally identified with the alcove model one dimension lower.  In addition, this subset of ``good alcoves'' includes all of the distinguished alcoves on $H^k_n$ (\textit{i.e.}, those corresponding to minimal length coset representatives), which can therefore in turn be identified with the distinguished alcoves one dimension lower.  

As an auxiliary definition, we will start by identifying translates $T^k_n$ of the hyperplanes $H^k_n$ by a unit of $\frac{1}{2}$ toward the origin.  The good alcoves are then characterized as sharing a face with these translates, and so the good alcoves can be thought of as the alcoves along $H^k_n$ which are, in some sense, closest to the origin. We remind the reader that in this section we only discuss the case in which $\ell_2 = k\pmod{2n} \in \{ 1, \dots, n\}$.  There is an analogous definition for both the hyperplanes $T^k_n$ and good alcoves in the case in which $\ell_2 = k\pmod{2n} \in \{ n+1, \dots, 2n\}$, but all of the arguments are identical.

\begin{df}\label{goodAlcoves} \rm
Fix a hyperplane $H^k_n$.  Define the hyperplane $T_n^k = \left\{ v \in \mathbb{R}^n: \langle v, \varepsilon_{\ell} \rangle = \left \lceil \frac{k}{2n} \right \rceil - \frac{1}{2}\right\}$ to be the translate of $H^k_n$ shifted by a unit of $\frac{1}{2}$ toward the origin.   We call an alcove whose  coroot lattice point lies in ${\mathscr{R}}_{2n}^k$ a \emph{good alcove} if it shares an $(n-1)$-dimensional face with the hyperplane $T^k_n$.
\end{df}

\begin{prop}\label{perp steps}
Any minimal length alcove walk to a good alcove takes exactly $k$ steps which are perpendicular to $H_n^k$. 
\end{prop}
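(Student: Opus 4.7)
The plan is to reduce the count of perpendicular steps to a difference of lengths in affine Weyl groups of different ranks. In any minimal length alcove walk from $\mathcal{A}_\circ$ to $\mathcal{A}$, each step crosses a distinct hyperplane in $\mathcal{H}$ separating $\mathcal{A}_\circ$ from $\mathcal{A}$, and a step is perpendicular precisely when it crosses a hyperplane $H_{\alpha,m}$ with $\alpha$ not orthogonal to $\varepsilon_\ell$. Consequently, the number of perpendicular steps in any minimal walk equals the total number of such separating hyperplanes, a quantity intrinsic to $\mathcal{A}$, and the proposition reduces to showing that this count equals $k$ for every good alcove.

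Next I would identify the parallel separating hyperplanes with hyperplanes of the lower-rank arrangement. A parallel hyperplane $H_{\alpha,m}$ has $\alpha \perp \varepsilon_\ell$, so its defining equation $\langle v,\alpha\rangle = m$ does not involve the $\ell$-th coordinate, and whether it separates two alcoves depends only on their non-$\ell$ coordinates. Under the identification of $H_n^k$ with $\mathbb{R}^{n-1}$ via the basis $\{\varepsilon_i : i\neq \ell\}$, each such hyperplane restricts to a hyperplane of the affine arrangement for $\widetilde{C}_{n-1}$, and by arguments parallel to those of Lemmas~\ref{alcove} and~\ref{fundamentalDomain} the image $\pi(\mathcal{A}_\circ)$ is the fundamental alcove of $\widetilde{C}_{n-1}$. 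It follows that the parallel hyperplanes separating $\mathcal{A}_\circ$ from a good alcove $\mathcal{A}$ are in bijection with the hyperplanes in $\widetilde{C}_{n-1}$ separating $\pi(\mathcal{A}_\circ)$ from $\pi(\mathcal{A})$; their number is therefore $\ell(\pi(\mathcal{A}))$, and the number of perpendicular separating hyperplanes equals $\ell(\mathcal{A}) - \ell(\pi(\mathcal{A}))$.

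It remains to show that $\ell(\mathcal{A}) - \ell(\pi(\mathcal{A})) = k$. For this I would invoke the abacus length formula (Corollary~8.1 in~\cite{HanusaJones}) already used to prove Lemma~\ref{lemming}. The abacus $\mathfrak{a}(\mathcal{A})$ has lowest beads at levels $(a_1,\ldots,a_n,-a_n,\ldots,-a_1)$ with $a_\ell = \K$, and $\mathfrak{a}(\pi(\mathcal{A}))$ is obtained by deleting runners $\ell$ and $2n+1-\ell$, which are precisely the runners carrying the beads at levels $\K$ and $-\K$. One then accounts for the drop in length as the sum of the intrinsic contribution of those two runners together with their interaction terms with the remaining runners. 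The main obstacle will be to show that when the other $a_j$ are nonzero these cross-interactions cancel so that the total drop is exactly $k$, and not merely in the special case $a_j = 0$ for $j \neq \ell$ handled in Lemma~\ref{lemming}. Alternatively, one could proceed geometrically by connecting $\mathcal{A}$ to $\mathcal{A}_\circ'$ through a sequence of carefully chosen reflections lying along $H_n^k$ and arguing that the set of perpendicular separating hyperplanes is preserved along this path; reconciling such a geometric argument with the possibility that hyperplanes of the form $H_{\varepsilon_\ell \pm \varepsilon_j, m}$ might separate distinct good alcoves is the principal subtlety.
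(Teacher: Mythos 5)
Your opening reduction is correct and is a clean way to phrase things: a minimal walk crosses each separating hyperplane exactly once, so the number of perpendicular steps is the intrinsic quantity $\#\{H_{\alpha,m}\ \text{separating}\ \mathcal{A}_\circ\ \text{from}\ \mathcal{A}\ :\ \langle\alpha,\varepsilon_{\ell}\rangle\neq 0\}$, and your bijection between the parallel separating hyperplanes and the separating hyperplanes of the $\widetilde{C}_{n-1}$ arrangement between $\pi(\mathcal{A}_\circ)$ and $\pi(\mathcal{A})$ is sound, giving that this count equals $\ell(\mathcal{A})-\ell(\pi(\mathcal{A}))$. The problem is that the identity $\ell(\mathcal{A})-\ell(\pi(\mathcal{A}))=k$, which you defer as ``the main obstacle,'' is not a residual computation: it is essentially the Walk Lifting Lemma (Lemma~\ref{walk}) and Corollary~\ref{MainTheorem2a}, both of which the paper \emph{derives from} this proposition, so you cannot appeal to anything downstream. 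Moreover, your first proposed route for establishing it fails as described: Corollary~8.1 of \cite{HanusaJones} computes the length of the \emph{minimal coset representative} from the abacus, whereas a good alcove need not be distinguished (distinguished alcoves are merely shown to be among the good ones), and whether $\pi$ carries distinguished alcoves to distinguished alcoves is Theorem~\ref{phi(w)}, itself proved later using this proposition. For a non-distinguished good alcove $\mathcal{A}$, neither $\ell(\mathcal{A})$ nor $\ell(\pi(\mathcal{A}))$ is determined by the abacus and coroot data you propose to manipulate, so the ``interaction terms'' bookkeeping has no footing.

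The paper closes the gap by the symmetry argument you only sketch as an alternative: a parallel step is a reflection across a hyperplane $H_{\alpha,m}$ with $\alpha\perp\varepsilon_{\ell}$; such reflections fix the $\varepsilon_{\ell}$-direction, hence preserve the class of hyperplanes not perpendicular to $H_n^k$ and preserve the number of perpendicular steps required to cross $T_n^k$; every good alcove is reached from $\mathcal{A}_{\circ}'$ by a sequence of such reflections; and $\mathcal{A}_{\circ}'$ is separated from $\mathcal{A}_\circ$ by exactly $k$ hyperplanes (Lemma~\ref{lemming}), none of which is perpendicular to $H_n^k$ because $\pi(\mathcal{A}_{\circ}')=\pi(\mathcal{A}_\circ)$. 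The subtlety you flag --- that oblique hyperplanes $H_{\varepsilon_{\ell}\pm\varepsilon_j,m}$ can separate distinct good alcoves --- is genuine, but it is absorbed by this invariance: what parallel movement preserves is the \emph{count}, not the set, of non-perpendicular separating hyperplanes. As written, your proposal sets up the right bookkeeping but stops short of proving the statement.
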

\begin{proof}
Observe from the symmetry of the hyperplane arrangement that steps in an alcove walk which are parallel to $H_n^k$ preserve the number of perpendicular steps required to cross the hyperplane $T_n^k$.  The result follows easily from this observation and Lemma~\ref{lemming}, since every good alcove can be obtained by reflecting $\mathcal{A}_{\circ}'$ across hyperplanes which are perpendicular to $H^k_n$.
\end{proof}

\begin{prop}
Distinguished alcoves whose coroot lattice points are in ${\mathscr{R}}_{2n}^k$ are good alcoves.  
\end{prop}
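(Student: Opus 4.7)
The plan is to show that a distinguished alcove $\mathcal{A} = w\mathcal{A}_{\circ}$ with $\lambda^{\vee} := w(0) \in \mathscr{R}_{2n}^{k}$ must lie in the closed slab $\K - \tfrac{1}{2} \leq \langle v,\varepsilon_{\ell}\rangle \leq \K$ bounded by $T_n^k$ and $H_n^k$, while having $\lambda^{\vee}$ as one of its vertices on $H_n^k$. This will force $T_n^k$ to be one of its walls. The vertex assertion is immediate: $\lambda^{\vee}$ is the image under $w$ of the origin, which is a vertex of $\mathcal{A}_{\circ}$, so $\lambda^{\vee}$ is a vertex of $\mathcal{A}$; and $\lambda^{\vee} \in H_n^k$ by Proposition~\ref{domainHyperplane}.

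The crux is to verify that $\mathcal{A}$ lies on the same side of $H_n^k$ as $\mathcal{A}_{\circ}$, namely the ``near side'' $\langle v,\varepsilon_{\ell}\rangle \leq \K$. I will suppose toward a contradiction that $\mathcal{A}$ lies on the far side. Since $\lambda^{\vee} \in H_n^k$, the affine reflection $s := s_{H_n^k}$ fixes $\lambda^{\vee}$, so $(sw)(0) = \lambda^{\vee}$, which means $sw$ is in the same coset as $w$ in $\C/C_n$. On the other hand, since $H_n^k$ now separates $\mathcal{A}_{\circ}$ from $\mathcal{A}$, it is crossed exactly once in every minimal alcove walk from $\mathcal{A}_{\circ}$ to $\mathcal{A}$; the strong exchange condition applied to the corresponding reflection then yields $\ell(sw) = \ell(w) - 1$, contradicting the minimality of $\ell(w)$ among coset representatives.

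Once $\mathcal{A}$ is known to lie on the near side of $H_n^k$ with vertex $\lambda^{\vee}$ on $H_n^k$, the rest is a quick geometric observation. Every vertex of any alcove in $\widetilde{C}_n$ has half-integer $\varepsilon_{\ell}$-coordinate, as one sees by computing directly that the vertices of $\mathcal{A}_{\circ}$ have coordinates in $\{0,\tfrac{1}{2}\}$ and noting that $\widetilde{C}_n$ acts by translations in $\Lambda_R^{\vee}$ together with signed permutations, both of which preserve the half-integer lattice. Since $\mathcal{A}$ is a simplex contained in the slab between two consecutive hyperplanes of the family $\{H_{2\varepsilon_{\ell},m}\}$, and one bounding hyperplane is $H_n^k$ (which contains the vertex $\lambda^{\vee}$), the other bounding hyperplane must be $T_n^k$. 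Hence $\mathcal{A}$ has vertices on $T_n^k$, and the face opposite to $\lambda^{\vee}$ in the $\varepsilon_{\ell}$-direction is the $(n-1)$-dimensional face of $\mathcal{A}$ contained in $T_n^k$, making $\mathcal{A}$ good.

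The main obstacle is the length-reduction step in the second paragraph, which rests on the geometric form of the strong exchange condition: when $H$ separates $\mathcal{A}_{\circ}$ from $w\mathcal{A}_{\circ}$, the reflection $s_H$ reduces the length of $w$ by exactly one on the left. Although standard, this requires unwinding the relationship between crossed hyperplanes in a minimal gallery and removable letters in a reduced expression. Once invoked, the remaining geometry is a direct synthesis of Proposition~\ref{domainHyperplane} and the explicit vertex structure of alcoves in $\widetilde{C}_n$.
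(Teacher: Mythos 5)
Your first two steps are sound: $\lambda^{\vee}=w(0)$ is indeed a vertex of $\mathcal{A}$ lying on $H_n^k$, and the near-side argument works --- $H_n^k = H_{2\varepsilon_{\ell},\,2\lceil k/2n\rceil}$ is a hyperplane of the arrangement, $s_{H_n^k}w$ lies in the coset $wC_n$ because it fixes $\lambda^{\vee}$, and a reflection in a hyperplane separating $\mathcal{A}_{\circ}$ from $w\mathcal{A}_{\circ}$ shortens $w$, contradicting minimality. The gap is in your last paragraph. From ``$\mathcal{A}$ lies in the closed slab between $T_n^k$ and $H_n^k$ and has the vertex $\lambda^{\vee}$ on $H_n^k$'' you conclude that the facet opposite $\lambda^{\vee}$ is contained in $T_n^k$. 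That only follows if $\lambda^{\vee}$ is the \emph{only} vertex of $\mathcal{A}$ on $H_n^k$. Since, as you note, every vertex has half-integer $\varepsilon_{\ell}$-coordinate, the $n+1$ vertices split between $H_n^k$ and $T_n^k$; if $j\geq 2$ of them lie on $H_n^k$, then $\mathcal{A}$ meets $T_n^k$ only in an $(n-j)$-dimensional face and is not good. This configuration genuinely occurs for alcoves satisfying both of the conditions you established: in $\widetilde{C}_3$ with $k=1$, the alcove with vertices $(1,0,0)$, $(1,-\tfrac12,0)$, $(\tfrac12,-\tfrac12,0)$, $(\tfrac12,-\tfrac12,-\tfrac12)$ has its coroot lattice point $(1,0,0)\in\mathscr{R}_6^1$ on $H_3^1=\{a_1=1\}$, lies on the near side, yet meets $T_3^1=\{a_1=\tfrac12\}$ only in an edge. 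So your argument proves too much, and minimality of $\ell(w)$ must be invoked a second time to exclude such alcoves.

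That second invocation is precisely what the paper's proof supplies: it observes that within a coset whose lattice point lies on $H_n^k$, some alcove shares a full $(n-1)$-dimensional face with $T_n^k$, and any alcove that does not has at least one more hyperplane separating it from $\mathcal{A}_{\circ}$, so the distinguished alcove must be one that does (the paper then propagates this to all of $\mathscr{R}_{2n}^k$ by translating within shifted Weyl chambers). To repair your proof you need an argument of this comparative-length kind; the purely local slab-and-vertex reasoning cannot close the gap.
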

\begin{proof}

As discussed in Section \ref{rootlatticeptmodel}, we can partition the coroot lattice points lying in ${\mathscr{R}}_{2n}^k$ into $(2n-2)$ sets corresponding to the shifted Weyl chambers meeting ${\mathscr{R}}_{2n}^k$.  Within each shifted Weyl chamber, the position of the distinguished alcoves in each translate of the fundamental region is the same.  

Within a shifted Weyl chamber, every distinguished alcove whose lattice point lies on $H_n^k$ is clearly a translation of another distinguished alcove whose lattice point also lies on $H_n^k$ by a translation that is parallel to $H_n^k$, provided that there is more than one coroot lattice point in this shifted Weyl chamber.  Translation parallel to $H_n^k$ sends good alcoves to good alcoves since $T^k_n$ is parallel to $H^k_n$.  It follows that if one distinguished alcove in a given shifted Weyl chamber is good, then so are the rest of these alcoves. 

Note that the hyperplane $T^k_n$ intersects each translate of the fundamental region on $H^k_n$ in a full $(n-1)$-dimensional face, since $T^k_n$ is parallel to at least one face of the fundamental region.    Consider the distinguished alcove on $H^k_n$ within each shifted Weyl chamber whose coroot lattice point is closest to the origin.  Since there exists at least one alcove in the same coset which shares a face with $T^k_n$, this must be true for the distinguished alcove.  Indeed, any alcove which does not share a full $(n-1)$-dimensional face with $T^k_n$ has at least one more hyperplane separating it from the fundamental alcove, in which case the length of the corresponding word is longer than that of the alcove sharing a face with $T^k_n$.  Therefore, these distinguished alcoves are good, and the result follows by the previous paragraph.
\end{proof}

\begin{lem}\label{existGoodAlcoves}
For any alcove $\mathcal{B} \in \mathbb{R}^{n-1}$ whose coroot lattice point is in $\mathscr{R}_{2n-2}^{\leq \left(k - \left\lceil \frac{k}{n} \right\rceil \right) }$, there exists a good alcove $\mathcal{A}$ on $H^k_n$ such that $\pi(\mathcal{A}) = \mathcal{B}$.  
\end{lem}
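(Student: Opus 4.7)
The plan is to explicitly construct the good alcove $\mathcal{A}$ by first describing its image under translation to the fundamental region centered at the origin, then translating back. Let $\mathbf{b} = (b_1, \ldots, b_{n-1}) \in \mathscr{R}_{2n-2}^{\leq(k - \lceil k/n \rceil)}$ denote the coroot lattice point of $\mathcal{B}$, and define $\mathbf{a} \in \mathbb{Z}^n$ by inserting $\left\lceil \frac{k}{2n}\right\rceil$ at position $\ell$, so that $\mathbf{a} = (b_1, \ldots, b_{\ell-1}, \left\lceil \frac{k}{2n}\right\rceil, b_\ell, \ldots, b_{n-1})$. Comparing cases (i) and (iii) of Proposition \ref{domainHyperplane} shows $\mathbf{a} \in \mathscr{R}_{2n}^k$, so $\mathbf{a}$ lies on $H^k_n$; this is precisely the inverse of $\Phi_n^k$ on coroot lattice points used in the proof of Theorem \ref{MainTheorem1}.

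Next, translate $\mathcal{B}$ to $\mathcal{B}_0 := \mathcal{B} - \mathbf{b}$, which lies in the fundamental region of $\widetilde{C}_{n-1}$. By the proof of Lemma \ref{alcove}, I can write $\mathcal{B}_0 = \{(\epsilon_1 b'_{\tau(1)}, \ldots, \epsilon_{n-1} b'_{\tau(n-1)}) : 1/2 > b'_1 > \cdots > b'_{n-1} > 0\}$ for unique signs $\epsilon_i \in \{-1, +1\}$ and a permutation $\tau \in S_{n-1}$. I then define $\mathcal{A}_0$ to be the alcove in the fundamental region of $\widetilde{C}_n$ corresponding to the permutation $\sigma \in S_n$ and signs $k_1, \ldots, k_n$ prescribed by $\sigma(\ell) = 1$, $k_\ell = -1$, and, for $j \ne \ell$, $\sigma(j) = \tau(j') + 1$ and $k_j = \epsilon_{j'}$, where $j' := j$ if $j < \ell$ and $j' := j - 1$ if $j > \ell$. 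Since $\tau$ bijects $\{1, \ldots, n-1\}$ with itself, the map $j \mapsto \sigma(j)$ bijects $\{1, \ldots, n\}\setminus\{\ell\}$ with $\{2, \ldots, n\}$, so $\sigma$ is a valid permutation of $\{1,\ldots,n\}$.

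The key observations are as follows. Because $\sigma(\ell) = 1$ and $k_\ell = -1$, the $\ell$-th coordinate of every point in $\mathcal{A}_0$ equals $-a_1$, which lies in $(-1/2, 0)$, and the wall $a_1 = 1/2$ of the base alcove transforms into the wall $c_\ell = -1/2$ of $\mathcal{A}_0$. Hence $\mathcal{A}_0$ shares an $(n-1)$-dimensional face with the hyperplane $\{c_\ell = -1/2\}$. A direct computation shows that dropping the $\ell$-th coordinate from $\mathcal{A}_0$ and making the substitution $b'_i = a_{i+1}$ recovers exactly $\mathcal{B}_0$. Setting $\mathcal{A} := \mathcal{A}_0 + \mathbf{a}$ then produces an alcove whose coroot lattice point is $\mathbf{a} \in H^k_n$, which shares an $(n-1)$-dimensional face with $T^k_n$, and which satisfies $\pi(\mathcal{A}) = \mathcal{B}$ because translation commutes with $\pi$. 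Therefore $\mathcal{A}$ is a good alcove projecting to $\mathcal{B}$, as required.

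The main obstacle will be the careful bookkeeping needed to verify that $\mathcal{A}_0$ is a legitimate alcove in the fundamental region of $\widetilde{C}_n$ and that its projection really is $\mathcal{B}_0$. The choice $\sigma(\ell) = 1$ with $k_\ell = -1$ is actually forced by the goodness requirement, since $c_\ell = k_\ell a_{\sigma(\ell)}$ can attain $-1/2$ only when $k_\ell = -1$ and $a_{\sigma(\ell)} = 1/2$, and $1/2$ bounds the base alcove only for the largest coordinate $a_1$. The remaining freedom in $\sigma$ and the $k_j$'s is then exactly enough to encode the data of $\mathcal{B}_0$, and an entirely analogous construction handles the case $\ell_2 \in \{n+1, \ldots, 2n\}$.
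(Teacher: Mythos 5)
Your proof is correct and follows essentially the same route as the paper: lift the coroot lattice point of $\mathcal{B}$ to a point of $\mathscr{R}_{2n}^k$ via Proposition~\ref{domainHyperplane} (the inverse of $\Phi_n^k$ on lattice points), then locate a good alcove in the corresponding translated fundamental region whose projection is $\mathcal{B}$. The only difference is that where the paper simply observes that $\pi$ carries $T_n^k$ intersected with that translated fundamental region onto the full translated fundamental region around the image point, you construct the required good alcove explicitly using the permutation-and-signs description of alcoves from the proof of Lemma~\ref{alcove}; both versions are sound.
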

\begin{proof}
Let $y \in \mathscr{R}_{2n-2}^{\leq \left(k - \left\lceil \frac{k}{n} \right\rceil \right)}$ be the coroot lattice point for the alcove $\mathcal{B}$.  By Proposition~\ref{domainHyperplane}, there exists a coroot lattice point $y' \in {\mathscr{R}}_{2n}^k$ such that $\pi(y') = y$.  Furthermore, from Lemma~\ref{fundamentalDomain}, we know that the translated fundamental region centered at $y'$ is mapped to the translated fundamental region centered at $\pi(y') = y$.  It suffices to show that there exists a good alcove with coroot lattice point $y'$ whose image under $\pi$ is $\mathcal{B}$.  

By Lemma~\ref{alcove}, the image of any good alcove with coroot lattice point $y'$ is an alcove with coroot lattice point $y$.  Moreover, it is clear that the image under $\pi$ of the intersection of $T_n^k$ and the translated fundamental region centered around $y'$ is the translated fundamental region centered around $y$.  It follows that there exists a good alcove $\mathcal{A}$ such that $\pi(\mathcal{A}) = \mathcal{B}$, as desired. 
\end{proof}

We now introduce ``clusters'' of alcoves as a way of partitioning alcoves into sets which allow maximal parallel movement without taking any perpendicular steps. A given alcove can reach any other alcove within its cluster via only parallel steps.  Conversely, to reach any alcove outside of its cluster, at least one perpendicular step must be taken. 

\begin{df}
Each alcove in $\widetilde{C}_n$ is contained in a \emph{cluster}, which is a set of alcoves which project down bijectively onto $H^k_n$ to alcoves in a translate of  the fundamental region of $\widetilde{C}_{n-1}$; see Figure~\ref{figWLL1}. All alcoves within a cluster are reachable from each other via a parallel walk, as defined in Definition \ref{perpstep}. These clusters are centered either at a coroot lattice point or at a coroot lattice point translated by $(1/2,\ldots, 1/2)$.  Equivalently, clusters are the sets of $(n-1)2^{n-1}$ alcoves which all share a face with the same hyperplane parallel to $H^k_n$, but also have a vertex in common which does not lie on $H^k_n$.
\end{df}

Taking steps within a cluster allows movement parallel to $H_n^k$ up to a unit in each direction without taking a perpendicular step. Two hyperplanes $H_{\alpha, k}$ and $H_{\alpha,m}$ are \emph{adjacent} if the difference $|k-m|$ is minimal.  Since all alcoves in a cluster have a face which lies in a hyperplane parallel to $H_n^k$, between any two adjacent hyperplanes parallel to $H_n^k$ we can make exactly one forward perpendicular step. 

\begin{lem}[Walk Lifting Lemma] \label{walk}
Let $\mathcal{A}$ be a good alcove  whose coroot lattice point lies in ${\mathscr{R}}_{2n}^k$.  If a minimal length alcove walk from $\pi(\mathcal{A}_{\circ})$ to $\pi(\mathcal{A})$ has length $s$, then any minimal length alcove walk from $\mathcal{A}_{\circ}$ to $\mathcal{A}$ has length $k + s$. 
\end{lem}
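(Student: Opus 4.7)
The plan is to compute directly the number of affine hyperplanes in $\mathcal{H}$ that separate $\mathcal{A}_\circ$ from $\mathcal{A}$, since this count equals the length of any minimal alcove walk from $\mathcal{A}_\circ$ to $\mathcal{A}$, as recalled in Section~\ref{sectionWeylGroups}. The strategy is to partition these separating hyperplanes into two disjoint classes according to their orientation with respect to $H^k_n$ and evaluate each class's contribution separately.

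First, the separating hyperplanes split naturally into those of the form $H_{\alpha,m}$ with $\langle \alpha, \varepsilon_\ell\rangle = 0$ (perpendicular to $H^k_n$, corresponding to parallel steps in an alcove walk) and those with $\langle \alpha, \varepsilon_\ell\rangle \ne 0$ (corresponding to perpendicular steps in the sense of Definition~\ref{perpstep}). Proposition~\ref{perp steps} immediately gives that any minimal walk to a good alcove contains exactly $k$ perpendicular steps; since in a minimal walk each separating hyperplane is crossed exactly once, this second class contributes exactly $k$ separating hyperplanes to the total count.

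For the first class, I would identify these bijectively with the hyperplanes of $\widetilde{C}_{n-1}$ that separate $\pi(\mathcal{A}_\circ)$ from $\pi(\mathcal{A})$. The positive roots of $\widetilde{C}_n$ perpendicular to $\varepsilon_\ell$ are precisely $\{2\varepsilon_i : i \ne \ell\} \cup \{\varepsilon_i \pm \varepsilon_j : i < j,\, i,j \ne \ell\}$, which under the identification of $H^k_n$ with $\R^{n-1}$ via the basis $\{\varepsilon_j : j \ne \ell\}$ correspond exactly to the positive roots of $\widetilde{C}_{n-1}$. Since the defining equation of $H_{\alpha,m}$ with $\alpha \perp \varepsilon_\ell$ is independent of the $\ell$-th coordinate, and $\pi$ preserves every coordinate except the $\ell$-th by~\eqref{defpi}, the separation relation transfers directly across the identification: $H_{\alpha,m}$ separates $\mathcal{A}_\circ$ from $\mathcal{A}$ in $\R^n$ if and only if its counterpart in $\R^{n-1}$ separates $\pi(\mathcal{A}_\circ)$ from $\pi(\mathcal{A})$. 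Hence this class contributes exactly $s$ separating hyperplanes.

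Summing the two counts yields $k + s$, which is therefore the length of any minimal alcove walk from $\mathcal{A}_\circ$ to $\mathcal{A}$. The main technical point requiring care is the invariance of separation under $\pi$ for perpendicular-to-$H^k_n$ hyperplanes, which amounts to a careful unwinding of the analytic formula~\eqref{defpi}; I expect no substantial obstacles beyond this coordinate bookkeeping, since the heavy lifting of producing the count $k$ has already been done in Proposition~\ref{perp steps}.
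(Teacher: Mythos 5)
Your proof is correct, but it takes a genuinely different route from the paper's. The paper argues in two halves: it first projects an arbitrary minimal walk of length $\ell$ onto $T_n^k$, which deletes the $k$ perpendicular steps guaranteed by Proposition~\ref{perp steps} and yields a walk of length $\ell - k$ downstairs, giving $\ell \geq k+s$; it then establishes the reverse inequality by an explicit inductive construction (on the ``level'' of the good alcove, using the cluster decomposition) of a lift of a minimal downstairs walk to an upstairs walk consisting only of parallel and forward perpendicular steps. That lifting construction is the bulk of the paper's proof. You bypass it entirely by counting separating hyperplanes: those not perpendicular to $H_n^k$ number exactly $k$ (Proposition~\ref{perp steps} applied to any one minimal walk, each separating hyperplane being crossed exactly once), while those perpendicular to $H_n^k$ transfer bijectively, with their separation property intact, to the $\widetilde{C}_{n-1}$-hyperplanes separating $\pi(\mathcal{A}_{\circ})$ from $\pi(\mathcal{A})$, of which there are $s$. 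The bookkeeping you defer is genuinely routine: for $\alpha \perp \varepsilon_{\ell}$ one has $\langle \pi(\lambda), \alpha \rangle = \langle \lambda, \alpha \rangle$ by \eqref{defpi}, and Lemmas~\ref{alcove} and~\ref{fundamentalDomain} guarantee that $\pi(\mathcal{A})$ and $\pi(\mathcal{A}_{\circ})$ are honest alcoves (the latter the fundamental alcove of $\widetilde{C}_{n-1}$), so that ``separates'' and ``minimal walk length equals number of separating hyperplanes'' mean the same thing on both sides. Your argument is shorter and avoids the delicate cluster/level induction; what the paper's constructive approach buys is an explicit walk realizing the minimum, which it invokes later in phrasing (e.g., ``any shorter walk would lift to a shorter original walk'' in Theorem~\ref{MainTheorem3}), though those later uses really only need the length identity that you prove.
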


\begin{proof}
For any minimal length alcove walk from $\mathcal{A}_{\circ}$ to $\mathcal{A}$ of length $\ell$, consider its projection to the hyperplane $T_n^k$, which is the hyperplane parallel to $H_n^k$ that contains a face of $\mathcal{A}$, as described in Definition~\ref{goodAlcoves}.  This projection deletes all the steps in the walk which are perpendicular to $H_n^k$.  By Proposition~\ref{perp steps}, this projection is an alcove walk from $\pi(\mathcal{A}_{\circ})$ to $\pi(\mathcal{A})$ of length $\ell - k$.  

Now suppose that any minimal length alcove walk from $\pi(\mathcal{A}_{\circ})$ to $\pi(\mathcal{A})$ has length $s$.  The previous observation then says that $\ell - k \geq s$, or equivalently $\ell \geq k + s$.  To prove that in fact $\ell = k + s$, it suffices to show that there exists a minimal length alcove walk from $\pi(\mathcal{A}_{\circ})$ to $\pi(\mathcal{A})$ that can be lifted to a walk from $\mathcal{A}_{\circ}$ to $\mathcal{A}$, containing only parallel steps to $H_n^k$ as well as perpendicular steps which move the centroid of the alcove closer to the hyperplane $H^k_n$.  For ease of reference, we refer to this latter type of step as a \emph{forward} perpendicular step.  As long as the walk from $\mathcal{A}_{\circ}$ to $\mathcal{A}$ contains only forward perpendicular steps and parallel steps to $H^k_n$, it will contain exactly $k$ perpendicular steps to $H_n^k$ by Proposition \ref{perp steps}.

We now index hyperplanes $T_n^k$ by levels, based on their distance from the origin.  Define the \emph{level} of the hyperplane $T^k_n$ to be the ceiling of the minimal distance from the origin to any point on $T^k_n$, which equals $\lceil \frac{k}{2n} \rceil$.  If a good alcove $\mathcal{A}$ shares a face with a level $r$ hyperplane $T^k_n$, we say that $\mathcal{A}$ is on level $r$.  We will prove the claim by using induction on the level of the hyperplane in which the good alcove $\mathcal{A}$ is located. 

We first consider the case where the good alcove $\mathcal{A}$ shares a face with a level $r=1$ hyperplane $T^k_n$. We need to show there is a path from $\mathcal{A}_\circ$ to $\mathcal{A}$ which takes only parallel and forward perpendicular steps.  Since $\mathcal{A}$ is on level $1$, then the coroot lattice point for $\mathcal{A}$ is between $-1$ and $1$ in each coordinate, and so either $\pi(\mathcal{A})=\pi(\mathcal{A}')$ for some alcove $\mathcal{A}'$ in the fundamental region, or $\mathcal{A}$ lies outside the fundamental region by at most one unit in each direction. In the first case, it is clear that $\mathcal{A}$ is reachable from $\mathcal{A}_{\circ}$ via a perpendicular walk. Indeed, the walk will consist of a sequence of forward perpendicular steps entirely within the fundamental region followed by a single forward perpendicular step to reach the good alcove on $T^k_n$.  Now suppose that $\pi(\mathcal{A})=\pi(\mathcal{A}')$ for some alcove $\mathcal{A}'$ which is not in the fundamental region. In this case, we can assume that $\mathcal{A}'$ lies in the same cluster as an alcove $\mathcal{A}_f$ which is in the fundamental region.  The walk from $\mathcal{A}_{\circ}$ to $\mathcal{A}$ should then begin by taking a perpendicular walk to $\mathcal{A}_f$ within the fundamental region (if $\mathcal{A}_f$ does not equal $\mathcal{A}_{\circ}$), then taking the parallel walk to $\mathcal{A}'$ within the cluster, followed by another perpendicular walk from $\mathcal{A}'$ to $\mathcal{A}$ as in the previous case.  In either scenario, we can lift the walk from $\pi(\mathcal{A}_{\circ})$ to $\pi(\mathcal{A})$ to an alcove walk from $\mathcal{A}_{\circ}$ to the good alcove $\mathcal{A}$ on level $1$.

For the induction hypothesis, suppose all the good alcoves at level $r$ satisfy the Walk Lifting Lemma.  Now, consider a good alcove $\mathcal{A}$ located at level $r + 1$.  If this good alcove has the same projection onto $T^k_n$ as a good alcove $\mathcal{A}'$ located at level $r$, then for the first part of the  walk from $\mathcal{A}_{\circ}$ to $\mathcal{A}$, we can just take the lift of the minimal alcove walk from $\pi(\mathcal{A}_{\circ})$ to $\pi(\mathcal{A}')$ given by the induction hypothesis (as pictured in Figure \ref{figWLL1}).  Once we reach $\mathcal{A}'$, we can then take a (forward) perpendicular walk to level $r+1$ to obtain a minimal length alcove walk from $\mathcal{A}_{\circ}$ to $\mathcal{A}$.  

\begin{figure}[htp]
\begin{center}
\includegraphics[scale = 1]{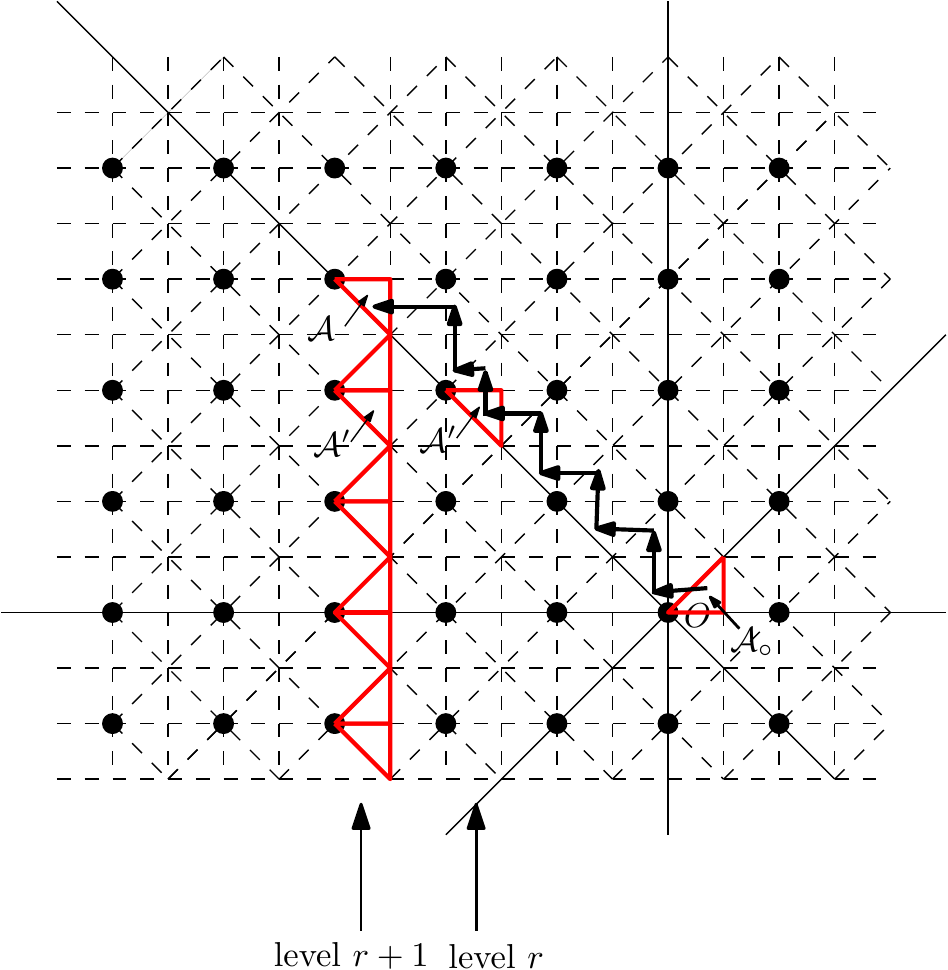}
\caption{Going from a good alcove at level $r$ to a good alcove at level $r+1$.}
\label{figWLL1}
\end{center}
\end{figure}

Finally, suppose the good alcove at level $r+1$ does not have the same projection onto $H^k_n$ as any alcove at level $r$.  Consider any minimal alcove walk from $\pi(\mathcal{A}_{\circ})$ to $\pi(\mathcal{A})$.  Let $\mathcal{A}'$ be the alcove on $T^k_n$ closest to $\mathcal{A}$ which does have the same projection as an alcove at level $r$, which by an abuse of notation we also call $\mathcal{A}'$ (see Figure \ref{figWLL1}).  We construct a lift of the walk from $\pi(\mathcal{A}_{\circ})$ to $\pi(\mathcal{A})$ as follows.  First lift the path from $\pi(\mathcal{A}_{\circ})$ to $\pi(\mathcal{A'})$ to a walk from $\mathcal{A}_{\circ}$ to the alcove $\mathcal{A}'$ at level $r$ by the induction hypothesis.  Similar to the base case of the induction, we construct the final part of the lift by taking a parallel walk within the cluster containing $\mathcal{A}'$ at level $r$, followed (if necessary) by a single perpendicular step to move to the next cluster, remaining between the same two adjacent hyperplanes parallel to $H^k_n$ which bound $\mathcal{A}'$.  Within this next cluster (at the latest), we will reach an alcove which has the same projection onto $H^k_n$ as $\mathcal{A}$.  From that point, take the perpendicular walk directly to $\mathcal{A}$.  Altogether, we have a walk from $\mathcal{A}_{\circ}$ to $\mathcal{A}$ consisting entirely of forward perpendicular and parallel steps, proving the claim for level $r + 1$.  By induction, it follows that all good alcoves satisfy the Walk Lifting Lemma. 
\end{proof}

Equipped with the Walk Lifting Lemma, we are now prepared to derive our main geometric results.  Let $\mathscr{X}_{2n}^k$ denote the distinguished alcoves whose associated coroot lattice points are elements of ${\mathscr{R}}_{2n}^k$ and let $\mathscr{X}_{2n}^{\leqslant k-\lceil\frac{k}{n}\rceil}$ be defined similarly.

\begin{thm}\label{phi(w)} The following diagram commutes:

\[
\xymatrix{
\mathscr{S}_{2n}^k \ar[r]^{F_\mathscr{X} } \ar[d]^{\Phi_n^k} & \mathscr{X}_{2n}^k \ar[d]^{\pi} \\ 
\mathscr{S}_{2n-2}^{\leq \left(k - \left\lceil \frac{k}{n} \right \rceil \right)} \ar[r]^{F_\mathscr{X}} & \mathscr{X}_{2n-2}^{\leq \left(k - \left\lceil \frac{k}{n} \right\rceil \right)} 
}
\]

\end{thm}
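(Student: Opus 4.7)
The plan is to reduce the statement to a claim about coset representatives via the Walk Lifting Lemma. Let $w\in\mathscr{S}_{2n}^k$ correspond under $F_{\mathscr{X}}$ to the distinguished alcove $\mathcal{A}_w$, with coroot lattice point $v\in\mathscr{R}_{2n}^k$. The proof of Theorem~\ref{MainTheorem1} already established that the coroot lattice point of $\Phi_n^k(w)$ is $\pi(v)$ (that is, $v$ with its $\ell$-th coordinate removed, under the identification of $H_n^k$ with $\mathbb{R}^{n-1}$). The alcove $\pi(\mathcal{A}_w)$, whose coroot lattice point is $\pi(v)$, therefore belongs to the same coset in $\widetilde{C}_{n-1}/C_{n-1}$ as the distinguished alcove $F_{\mathscr{X}}\bigl(\Phi_n^k(w)\bigr)$. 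Hence it suffices to show that $\pi(\mathcal{A}_w)$ is itself the minimal length representative in its coset.

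First I would observe that $\mathcal{A}_w$ is a good alcove: since $v\in\mathscr{R}_{2n}^k$ and $\mathcal{A}_w$ is distinguished, the proposition preceding Lemma~\ref{existGoodAlcoves} applies. Now suppose, for contradiction, that $\pi(\mathcal{A}_w)$ is not the distinguished alcove in its coset, so that there exists an alcove $\mathcal{B}$ in the same coset of $\widetilde{C}_{n-1}/C_{n-1}$ with $\ell(\mathcal{B})<\ell\bigl(\pi(\mathcal{A}_w)\bigr)$. The coroot lattice point of $\mathcal{B}$ equals that of $\pi(\mathcal{A}_w)$, which is $\pi(v)\in\mathscr{R}_{2n-2}^{\leqslant(k-\lceil k/n\rceil)}$, so Lemma~\ref{existGoodAlcoves} yields a good alcove $\mathcal{A}'$ in $\mathbb{R}^n$ with $\pi(\mathcal{A}')=\mathcal{B}$.

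The key technical point, and what I expect to be the subtlest step, is verifying that $\mathcal{A}'$ lies in the same coset of $\widetilde{C}_n/C_n$ as $\mathcal{A}_w$. Let $v'$ denote the coroot lattice point of $\mathcal{A}'$. Because $\mathcal{A}'$ is a good alcove, $v'\in\mathscr{R}_{2n}^k$, and in particular $v'\in H_n^k$. Similarly $v\in H_n^k$. Since $\pi$ acts as the identity on $H_n^k$ (viewed inside $\mathbb{R}^n$), and $\pi(v')$ and $\pi(v)$ are both the coroot lattice point of $\mathcal{B}=\pi(\mathcal{A}')$ and $\pi(\mathcal{A}_w)$ respectively (which agree by assumption), we conclude $v'=\pi(v')=\pi(v)=v$. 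Thus $\mathcal{A}'$ and $\mathcal{A}_w$ share a coroot lattice point and hence lie in the same coset.

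Finally, applying the Walk Lifting Lemma to both good alcoves $\mathcal{A}'$ and $\mathcal{A}_w$ gives
\[
\ell(\mathcal{A}')=k+\ell(\mathcal{B})<k+\ell\bigl(\pi(\mathcal{A}_w)\bigr)=\ell(\mathcal{A}_w),
\]
contradicting the minimality of $\mathcal{A}_w$ in its coset. Hence $\pi(\mathcal{A}_w)$ is distinguished and equals $F_{\mathscr{X}}\bigl(\Phi_n^k(w)\bigr)$, so the diagram commutes. The case $\ell_2\in\{n+1,\ldots,2n\}$ proceeds identically, with the role of $\varepsilon_{\ell}$ replaced by $\varepsilon_{n-\ell+1}$ throughout and the same good alcove/Walk Lifting machinery developed in this section.
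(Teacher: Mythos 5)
Your proposal is correct and follows essentially the same route as the paper's own proof: reduce to showing that $\pi$ sends distinguished alcoves to distinguished alcoves, then derive a contradiction from a shorter alcove $\mathcal{B}$ by producing a good alcove $\mathcal{A}'$ over $\mathcal{B}$ via Lemma~\ref{existGoodAlcoves} and comparing lengths with the Walk Lifting Lemma. The only difference is that you spell out why $\mathcal{A}'$ lies in the same coset as $\mathcal{A}_w$ (via injectivity of $\pi$ on $H_n^k$), a point the paper asserts without elaboration.
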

\begin{proof} 
Start with a core $S \in \mathscr{S}_{2n}^k.$  By the commutative diagram~\eqref{CoresToAbaci}, the coroot lattice point associated to the image of $F_{\mathscr{X}}(S)$ is precisely the same as the coroot lattice point associated to the alcove $F_{\mathscr{X}}(\Phi_n^k(S))$.  To complete the theorem, it suffices to show that $\pi$ sends distinguished alcoves to distinguished alcoves. 

To see this, suppose by contradiction that $\mathcal{A}\in\mathscr{X}_{2n}^k$ is an alcove such that $\pi(\mathcal{A})$ is not distinguished.  Let $s$ be the length of the minimum length alcove walk to $\pi(\mathcal{A})$.  There exists an alcove $\mathcal{B}$ with the same coroot lattice point as $\pi(\mathcal{A})$ to which there exists a minimal length walk of length $r < s$.  By Lemma~\ref{existGoodAlcoves}, there exists a good alcove $\mathcal{A}'$ in the same coset as $\mathcal{A}$ so that $\pi(\mathcal{A}') = \mathcal{B}$.  Since both $\mathcal{A}$ and $\mathcal{A}'$ are good, by the Walk Lifting Lemma there exist minimal length alcove walks to $\mathcal{A}$ and $\mathcal{A}'$ of lengths $k+s$ and $k+r$, respectively.  Since $k+s > k+r$, this is a contradiction to the fact that $\mathcal{A}$ is a distinguished alcove.  It follows that $\pi(\mathcal{A})$ must be distinguished. 
\end{proof}

\begin{thm}\label{MainTheorem3}
Let $w$ be a minimal length coset representative for $\widetilde{C}_n/C_n$ such that the associated core has first part $k$; \textit{i.e.}, suppose that the alcove $\mathcal{A}_w \in \mathscr{X}^k_{2n}$.  If 
$\mathcal{A}_{\circ} = \mathcal{A}^1\to\cdots\to\mathcal{A}^r = \mathcal{A}_w$
is a minimal length alcove walk for $w$, then 
\begin{equation}\label{1.6}
\pi(\mathcal{A}^1)\to\cdots\to\pi(\mathcal{A}^r)
\end{equation}
is an alcove walk for $\Phi_n^k(w)$. 
Moreover, if one removes all repeated instances of alcoves in \eqref{1.6}, then the resulting walk is a minimal length alcove walk for $\Phi_n^k(w)$.
\end{thm}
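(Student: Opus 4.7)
The plan is to prove two assertions: first, that the projected sequence $\pi(\mathcal{A}^1)\to\cdots\to\pi(\mathcal{A}^r)$ is indeed an alcove walk for $\Phi_n^k(w)$; and second, that removing the repeated alcoves from this sequence produces a minimal length alcove walk for $\Phi_n^k(w)$.

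For the first assertion, I would analyze each step $\mathcal{A}^i\to \mathcal{A}^{i+1}$ according to whether it is a perpendicular or parallel step in the sense of Definition \ref{perpstep}. A parallel step is a reflection across a hyperplane $H_{\alpha,m}$ whose normal $\alpha$ lies in the span of $\{\varepsilon_j : j \neq \ell\}$, while a perpendicular step reflects across a tilted hyperplane whose normal has a nonzero $\varepsilon_\ell$-component. Under the projection $\pi$ of \eqref{defpi}, the vertical (non-tilted) hyperplanes descend cleanly to hyperplanes of $\mathbb{R}^{n-1}$, while tilted hyperplanes do not descend to hyperplanes at all; in fact the alcove arrangement of $\widetilde{C}_n$ partitions into ``cylinders'' in the $\varepsilon_\ell$-direction, each of which maps bijectively onto a single alcove of $\widetilde{C}_{n-1}$ under $\pi$. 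The main technical point I expect to grapple with is making this cylinder picture precise; it reduces to a case analysis on the three families of positive roots in type $C$ (namely $2\varepsilon_i$, $\varepsilon_i-\varepsilon_j$, and $\varepsilon_i+\varepsilon_j$), together with the explicit reflection formula $s_{\alpha,m}(\lambda)=\lambda-(\langle \lambda,\alpha\rangle -m)\alpha^\vee$, showing that when $\alpha$ involves $\varepsilon_\ell$ the two alcoves adjacent to $H_{\alpha,m}$ have identical projections. Consequently, a parallel step gives a genuine adjacent step between $\pi(\mathcal{A}^i)$ and $\pi(\mathcal{A}^{i+1})$, whereas a perpendicular step collapses to a repetition $\pi(\mathcal{A}^i)=\pi(\mathcal{A}^{i+1})$. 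Combined with Lemma \ref{alcove}, the observation that $\pi(\mathcal{A}_\circ)$ is the fundamental alcove of $\widetilde{C}_{n-1}$ (by direct inspection of its defining inequalities), and Theorem \ref{phi(w)} (which gives $\pi(\mathcal{A}_w)=\mathcal{A}_{\Phi_n^k(w)}$), the projected sequence is an alcove walk for $\Phi_n^k(w)$.

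For the second assertion, I would count the repetitions that occur under the projection. Since $w$ is a minimal length coset representative whose coroot lattice point lies in $\mathscr{R}_{2n}^k$, the alcove $\mathcal{A}_w$ is a good alcove by the proposition immediately following Proposition \ref{perp steps}; hence Proposition \ref{perp steps} guarantees that the given minimal length alcove walk from $\mathcal{A}_\circ$ to $\mathcal{A}_w$ contains exactly $k$ perpendicular steps. Each such perpendicular step contributes exactly one repetition in the projected walk, so the reduced walk has $\ell(w)-k$ genuine steps from the fundamental alcove of $\widetilde{C}_{n-1}$ to $\mathcal{A}_{\Phi_n^k(w)}$. Finally, the Walk Lifting Lemma (Lemma \ref{walk}) yields $\ell(w)=k+\ell(\Phi_n^k(w))$, so the reduced walk has length $\ell(\Phi_n^k(w))$ and is therefore minimal.
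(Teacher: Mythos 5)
Your proposal is correct and follows essentially the same route as the paper: both rest on Lemmas \ref{alcove} and \ref{fundamentalDomain}, Theorem \ref{phi(w)}, Proposition \ref{perp steps} (via the goodness of distinguished alcoves on $H_n^k$), and the Walk Lifting Lemma. The only differences are cosmetic — you spell out the perpendicular/parallel dichotomy that the paper compresses into the assertion that $\pi$ takes adjacent alcoves to adjacent alcoves, and you establish minimality by directly counting the $\ell(w)-k$ surviving steps rather than by the paper's contradiction argument (a count that in fact appears verbatim in the paper's proof of Corollary \ref{MainTheorem2a}).
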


\begin{proof}
By Lemmas \ref{alcove} and \ref{fundamentalDomain}, we know that $\pi(\mathcal{A}_\circ)$ is the fundamental alcove for $\widetilde{C}_{n-1}$.  In addition, Theorem \ref{phi(w)} shows that $\pi(\mathcal{A}_w)$ corresponds to $\Phi_n^k(w)$.  Moreover,  $\pi$ takes adjacent alcoves to adjacent alcoves, so \eqref{1.6} is an alcove walk from the identity alcove in $\widetilde{C}_{n-1}$ to the alcove corresponding to $\Phi_n^k(w)$.  The removal of the repeated instances of alcoves still clearly yields an appropriate alcove walk under $\pi$.  Moreover, the Walk Lifting Lemma says that this must in fact be a minimal length walk, since any shorter walk would lift to a shorter original walk to $\mathcal{A}_w$.
\end{proof}

\begin{cor}\label{MainTheorem2a}
Let $w$ be a minimum length coset representative for $\widetilde{C}_n/C_n$. Then 
\begin{equation}\ell_{\widetilde{C}_n}(w)-\ell_{\widetilde{C}_{n-1}}(\Phi_n^k(w))=k.\end{equation}
\end{cor}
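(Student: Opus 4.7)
The plan is to derive this corollary as an essentially immediate consequence of the machinery already assembled, in particular the Walk Lifting Lemma (Lemma~\ref{walk}) together with Theorem~\ref{phi(w)}. The key observation is that $\ell_{\widetilde{C}_n}(w)$ is precisely the length of any minimal alcove walk from $\mathcal{A}_\circ$ to $\mathcal{A}_w$, and similarly $\ell_{\widetilde{C}_{n-1}}(\Phi_n^k(w))$ is the length of any minimal alcove walk from the fundamental alcove in $\widetilde{C}_{n-1}$ to the alcove corresponding to $\Phi_n^k(w)$.

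First I would verify that the hypotheses of the Walk Lifting Lemma apply to $\mathcal{A}_w$. By hypothesis $w$ is a minimum length coset representative whose associated symmetric $(2n)$-core has first part $k$, so $\mathcal{A}_w \in \mathscr{X}_{2n}^k$ is a distinguished alcove whose coroot lattice point lies in $\mathscr{R}_{2n}^k$. We already proved that every such distinguished alcove is a good alcove, so the Walk Lifting Lemma is available: any minimal length walk from $\mathcal{A}_\circ$ to $\mathcal{A}_w$ has length $k+s$, where $s$ is the length of a minimal walk from $\pi(\mathcal{A}_\circ)$ to $\pi(\mathcal{A}_w)$.

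Next I would translate this equality of walk lengths into an equality of Coxeter lengths. By Lemma~\ref{fundamentalDomain}, $\pi(\mathcal{A}_\circ)$ is the fundamental alcove of $\widetilde{C}_{n-1}$. By Theorem~\ref{phi(w)}, $\pi$ sends the distinguished alcove $\mathcal{A}_w$ to the distinguished alcove $\mathcal{A}_{\Phi_n^k(w)}$ in $\widetilde{C}_{n-1}/C_{n-1}$. Since minimum length coset representatives are characterized by having alcoves of minimum distance from the fundamental alcove, we conclude $s = \ell_{\widetilde{C}_{n-1}}(\Phi_n^k(w))$ and $\ell_{\widetilde{C}_n}(w) = k + s$, which is the desired identity.

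There is no substantive obstacle: all of the nontrivial geometric work has already been carried out in establishing Proposition~\ref{perp steps}, the Walk Lifting Lemma, and Theorem~\ref{phi(w)}. The only step requiring a moment of care is ensuring that the alcove $\pi(\mathcal{A}_w)$ is not merely in the correct coset but is itself the distinguished representative, which is exactly the content of Theorem~\ref{phi(w)}. With this in hand, the corollary is a one-line consequence of the Walk Lifting Lemma.
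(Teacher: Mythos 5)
Your proposal is correct and takes essentially the same approach as the paper: both arguments rest on the Walk Lifting Lemma (Lemma~\ref{walk}) together with the fact from Theorem~\ref{phi(w)} that $\pi$ carries the distinguished alcove $\mathcal{A}_w$ to the distinguished alcove for $\Phi_n^k(w)$. The paper reaches the identity by projecting a minimal walk, deleting the $k$ repeated alcoves counted by Proposition~\ref{perp steps}, and then using the Walk Lifting Lemma to certify minimality, whereas you invoke the lemma directly; the underlying content is identical.
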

\begin{proof}
Start with a minimal length alcove walk
$$\mathcal{A}_\circ=\mathcal{A}^1\to\cdots\to\mathcal{A}^{s+1}=F_\mathscr{X}(w),$$
where $s = \ell_{\widetilde{C}_n(w)}$.  Then 
$$\pi(\mathcal{A}_\circ)\to\cdots\to\pi(F_\mathscr{X}(w))$$
is an alcove walk for $F_\mathscr{X}(\Phi_n^k(w))$.  Note that in this alcove walk there are exactly $k$ repeated instances of alcoves, corresponding to the $k$ perpendicular steps to $H_n^k$ guaranteed by Proposition \ref{perp steps}.  We may delete these repeated instances of alcoves to get an alcove walk of length $s - k$.  This alcove walk is minimal, for if it is not, then there exits an alcove walk of length $r < s - k$, which, by the Walk Lifting Lemma may be lifted to an alcove walk from $\mathcal{A}_{\circ}$ to $F_{\mathscr{X}}(w)$ of length $r + k < s$, contradicting the fact that $\ell(w) = s$.  Therefore $\ell_{\widetilde{C}_{n-1}}(\Phi_n^k(w)) = \ell_{\widetilde{C}_n}(w) - k$ and the claim follows. 
 \end{proof}


\section{Action of $\Phi_n^k$ On Reduced Words} \label{sectionAction}

In this section, we will describe the action of the map $\Phi_n^k$ on reduced words of $\widetilde{C}_n/C_n$.  We remark that analogous claims for type $A$ were stated without proof in \cite{BJV}, and so we include all of the details here.  Let $s_{i_1} \cdots s_{i_{\ell}}$ be a reduced word in $\widetilde{C}_n/C_n$ which corresponds to the abacus $\mathfrak{a} = (a_1, \ldots, a_n, -a_n, \ldots, -a_1)$.  From the correspondence between reduced words and abacus diagrams, we can see that $s_{i_{\ell}}\cdots s_{i_1} (\mathfrak{a}) = (0, \ldots, 0)$, is the abacus diagram corresponding to the identity element in $\widetilde{C}_n/C_n$. 

From $s_{i_1} \cdots s_{i_{\ell}}$, we can build a reduced word for $\Phi_n^k(\mathfrak{a})$ in the following $\ell$ steps.  First, consider the action of $s_{i_1}$ on the abacus $\mathfrak{a}$.  Using the results of Section 3.2 in \cite{HanusaJones}, if $s_{i_1}$ acts on $\mathfrak{a}$ by changing the position of the largest runner, set $t_1 = 1 \in \widetilde{C}_{n-1}$.  Otherwise, if applying $s_{i_1}$ to $\mathfrak{a}$ does not change the position of the largest runner, then there exists a unique generator $s_{i_1'} \in \widetilde{C}_{n-1}$, where $i_1' = i_1$ or $i_1 - 1$, depending on the positions of the runners being changed relative to the largest runner, such that $\Phi_n(s_{i_1}\mathfrak{a}) = s_{i_1'} \Phi_n(\mathfrak{a})$.  In this case, set $t_1 = s_{i_1'}$.

At step $r$ for $1 \leq r \leq \ell$, consider the action of $s_{i_r}$ on the abacus $s_{i_{r-1}} \cdots s_{i_1} ( \mathfrak{a})$.  As with the first step, if $s_{i_r}$ changes the position of the largest runner, set $t_r = 1 \in \widetilde{C}_{n-1}$.  Otherwise, there exists a unique generator $s_{i_r'} \in \widetilde{C}_{n-1}$ such that 
$$\Phi_n(s_{i_r} s_{i_{r-1}} \cdots s_{i_1}\cdot \mathfrak{a}) = s_{i_r'} \Phi_n(s_{i_{r-1}} \cdots s_{i_1} \cdot \mathfrak{a}) = s_{i_r'} t_{r-1} \cdots t_1 \cdot \Phi_n(\mathfrak{a}).  $$
In this case, set $t_r = s_{i_r'}$.  

From our construction, we obtain the following commutative diagram for all $1 \leq r \leq \ell$: 
$$\xymatrix{
s_{i_{r-1}} \cdots s_{i_1} \cdot \mathfrak{a} \ar[r]^-{\Phi_n} \ar[d]^{s_{i_r}} &t_{r-1} \cdots t_1 \cdot \Phi_n(\mathfrak{a}) \ar[d]^{t_r} \\
s_{i_r} s_{i_{r-1}} \cdots s_{i_1} \cdot \mathfrak{a} \ar[r]^-{\Phi_n} &t_r t_{r-1} \cdots t_1 \cdot \Phi_n(\mathfrak{a})
}$$
It follows that $t_1 \cdots t_{\ell}$ is a word corresponding to the abacus $\Phi_n(\mathfrak{a})$.  By Lemma~\ref{lemLambda1}, since $s_{i_1} \cdots s_{i_{\ell}}$ is a reduced word for the abacus $\mathfrak{a}$, there are exactly $\lambda_1$ instances when the generator $s_{i_j}$ changes the largest runner.  In fact, it will either move the largest runner to the left by one, or, if the largest runner is the first runner, decrease the level of the largest runner by one and move it to runner $2n$.  In other words, the word $t_1 \cdots t_{\ell}$ has a length of exactly $\ell - \lambda_1$.  

\begin{prop}
The word $t_1 \cdots t_{\ell}$ is a reduced word corresponding to the abacus $\Phi_n(\mathfrak{a})$. 
\end{prop}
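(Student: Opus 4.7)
The construction preceding the statement has already shown two things: that $t_1 \cdots t_\ell$ equals $\Phi_n(\mathfrak{a})$ in $\widetilde{C}_{n-1}/C_{n-1}$, obtained by induction on $r$ from the commutative squares defining the $t_r$ together with the fact that $s_{i_\ell} \cdots s_{i_1} \cdot \mathfrak{a}$ is the identity abacus; and that after removing the $\lambda_1 = k$ trivial factors among the $t_r$, the remaining word has length exactly $\ell - k$. Consequently $\ell_{\widetilde{C}_{n-1}}(\Phi_n(\mathfrak{a})) \leqslant \ell - k$. To conclude that $t_1 \cdots t_\ell$ is reduced, it therefore suffices to establish the matching lower bound $\ell_{\widetilde{C}_{n-1}}(\Phi_n(\mathfrak{a})) \geqslant \ell - k$.

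My plan is to describe an inverse lifting procedure $\Psi$ that takes any reduced expression $u_1 \cdots u_m$ in $\widetilde{C}_{n-1}/C_{n-1}$ representing $\Phi_n(\mathfrak{a})$ and produces a word in $\widetilde{C}_n/C_n$ of length $m + k$ whose reverse reduces $\mathfrak{a}$ to the identity abacus. Roughly, one scans the $u_j$ in order, promoting each to a Coxeter generator of $\widetilde{C}_n$ by applying a one-step index shift when crossing the pair of runners deleted by $\Phi_n$; whenever promoting $u_j$ would otherwise displace the tracked largest runner of the growing lifted abacus, one first inserts an auxiliary generator which shifts the largest runner instead. The number of such insertions is exactly $k$: by Lemma~\ref{lemLambda1} the starting largest bead sits in runner $i$ at level $\lceil k/(2n)\rceil$, and dissolving it requires precisely $\lambda_1 = k$ unit adjustments (leftward swaps, or wrap-arounds that decrement the level). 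Applying $\Psi$ to a shortest expression for $\Phi_n(\mathfrak{a})$ of length $m = \ell_{\widetilde{C}_{n-1}}(\Phi_n(\mathfrak{a}))$ yields a word for $\mathfrak{a}$ of length $m + k$, which by reducedness of $s_{i_1} \cdots s_{i_\ell}$ must satisfy $m + k \geqslant \ell_{\widetilde{C}_n}(\mathfrak{a}) = \ell$. The desired lower bound, and hence the proposition, follows.

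The main obstacle is making $\Psi$ well defined and verifying that its output has the claimed length. This amounts to a case-by-case check that the index-shifting rules and insertion rules are internally consistent across all runner configurations, and that the total number of insertions is exactly $k$; it mirrors the bookkeeping that justified the forward algorithm. An alternative route, avoiding the explicit construction of $\Psi$, would be to invoke the Strong Exchange Property in $\widetilde{C}_{n-1}$: any length-reducing exchange on $t_1 \cdots t_\ell$ would pull back through the commutative squares to a length-reducing exchange on $s_{i_1} \cdots s_{i_\ell}$, contradicting its reducedness.
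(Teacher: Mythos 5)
Your main argument is essentially the paper's own proof: the paper establishes the lower bound $\ell_{\widetilde{C}_{n-1}}(\Phi_n(\mathfrak{a})) \geq \ell - \lambda_1$ by precisely the lifting construction you describe (each generator acting on $\Phi_n(\mathfrak{a})$ is promoted to one or two generators acting on $\mathfrak{a}$, with the auxiliary insertions --- leftward swaps of the largest runner or wrap-arounds decrementing its level, plus a final cleanup word --- totalling exactly $\lambda_1 = k$), packaged as a contradiction with a hypothetical shorter reduced word rather than as a direct lower bound. Your proposed Strong Exchange alternative is less convincing, since the commutative squares only transport words forward from $\widetilde{C}_n$ to $\widetilde{C}_{n-1}$ and pulling an exchange back is not immediate, but your primary route is sound and matches the paper.
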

\begin{proof}
It suffices to show that the length of a reduced word for $\Phi_n(\mathfrak{a})$ is $\ell - \lambda_1$.  We will proceed via contradiction.  Suppose $s_{i_1} \cdots s_{i_{p}}$ is a reduced word for $\Phi_n(\mathfrak{a})$, with $p < \ell - \lambda_1$.  We will construct a word for $\mathfrak{a}$ that has length $p + \lambda_1 < \ell$.  

By definition, $s_{i_{p}} \cdots s_{i_1} \cdot \Phi_n(\mathfrak{a}) = (0, \ldots, 0)$ corresponds to the abacus representing the identity element of the quotient $\widetilde{C}_{n-1}/C_{n-1}$.  We will construct a word for $\mathfrak{a}$ as follows.  For the first step, consider the action of $s_{i_1}$ on the abacus $\Phi_n(\mathfrak{a})$.  There are two cases to consider: 
\begin{enumerate}[label= (\roman{*})]
\item Suppose $1 \leq i_1 \leq n-1$.  The generator $s_{i_1}$ swaps two runners (along with their symmetric runners).  Consider the positions of the two runners in the abacus $\mathfrak{a}$.  If they are adjacent in $\mathfrak{a}$, then there exists a generator $s_{i_1'} \in \widetilde{C}_n$ such that $s_{i_1'}$ switches these two runners in the abacus $\mathfrak{a}$.  If this is the case, set $w_1 = s_{i_1'}$.  On the other hand, if the two runners are not adjacent in $\mathfrak{a}$, then they must be separated by the longest runner or the symmetric runner of the longest runner. In this case, set $w_1 = s_{i_1''}s_{i_1'}$, where $s_{i_1'}$ is the generator that moves the longest runner to the left by one position, and $s_{i_1''}$ is the generator that swaps the two desired runners.\\

\item Suppose $i_1 = 0$.  The generator $s_0$ swaps the first runner and the last runner, increases the first runner by one level, and decreases the last runner by one level.  Consider the position of this first runner in the abacus $\mathfrak{a}$.  If this runner is the first runner, set $w_1 = s_0$.  On the other hand, if this runner is not the first runner, then the first runner must be the longest runner or the symmetric runner of the longest runner.  If the first runner is the longest runner, set $w_1 = s_0 s_1 s_0$.  If the first runner is the symmetric runner of the longest runner, then set $w_1 = s_0 s_1$.  
\end{enumerate}

It can be checked from our construction that $w_1 \mathfrak{a} = s_{i_1} \Phi_n(\mathfrak{a})$.  \\

At step $r$ for $1 \leq r \leq p$, suppose we have inductively constructed $w_j$ for all $1 \leq j \leq r-1$ such that $\Phi_n(w_{r-1} \cdots w_1 \cdot \mathfrak{a}) = s_{i_{r-1}} \cdots s_{i_1} \cdot \Phi_n(\mathfrak{a})$.  Consider the action of $s_{i_r}$ on the abacus $s_{i_{i-1}} \cdots s_{i_1} \cdot \Phi_n(\mathfrak{a})$.  We construct $w_r$ in the same manner as was described above so that the following diagram commutes: 
$$\xymatrix{
w_{r-1} \cdots w_1 \cdot \mathfrak{a} \ar[r]^-{\Phi_n} \ar[d]^{w_r} &s_{i_{r-1}} \cdots s_{i_1} \cdot \Phi_n(\mathfrak{a}) \ar[d]^{s_{i_r}} \\ 
w_r w_{r-1} \cdots w_1 \cdot \mathfrak{a} \ar[r]^-{\Phi_n} & s_{i_r} s_{i_{r-1}} \cdots s_{i_1} \Phi_n(\mathfrak{a}) 
}$$

To finish our construction, notice that after step $p$, we have that $\Phi_n(w_p w_{p-1} \cdots w_1 \cdot \mathfrak{a}) = (0, \ldots, 0)$ is the abacus corresponding to the identity element in $\widetilde{C}_{n-1}/C_{n-1}$.  However, $w_p w_{p-1} \cdots w_1 \cdot \mathfrak{a}$ is not necessarily the abacus corresponding to the identity element in $\widetilde{C}_n/C_n$.  Let $w_{p+1}$ be the shortest word so that $w_{p+1} w_p \cdots w_1 \cdot \mathfrak{a} = (0, \ldots, 0)$ is the abacus corresponding to the identity element in $\widetilde{C}_n/C_n$.  

From our construction, the word $(w_{p+1} \cdots w_1)^{-1} = w_1^{-1} \cdots w_{p+1}^{-1}$ is a word corresponding to the abacus $\mathfrak{a}$, and it has length $\ell(s_{i_1} \cdots s_{i_p}) + \lambda_1 = p + \lambda_1$.  This is because every extra generator we added moves the largest runner to the left, or moves it from the first runner to the last runner while decreasing its level by one.  Since $p + \lambda_1 < \ell$, the length of the word corresponding to the abacus $\mathfrak{a}$, we have arrived at a contradiction.  Therefore, the length of $\Phi_n(\mathfrak{a})$ is $\ell - \lambda_1$, as desired. 
\end{proof}

\begin{exam}\rm
The canonical reduced word for the abacus $\mathfrak{a} = (2, 1, -1, 1, -1, -2)$ can be found as described in Section \ref{S:canonicalwords} to be $s_0 s_1 s_3s_2s_3s_0s_1s_2s_0 s_1 s_0$.  Using the procedure outlined in this section, a reduced word for $\Phi_n(\mathfrak{a}) = (1, -1, 1, -1)$ is $1 \cdot 1 \cdot s_2 \cdot 1 \cdot 1 \cdot s_0 s_1 \cdot 1 \cdot  s_0 \cdot 1 \cdot 1 = s_2 s_0 s_1 s_0.$
\begin{eqnarray}
(2, 1, -1, 1, -1, -2) &\stackrel{s_0}{\to}& (-1, 1, -1, 1, -1, 1) \stackrel{s_1}{\to} (1, -1, -1, 1, 1, -1) \stackrel{s_3}{\to} (1, -1, 1, -1, 1, -1)  \nonumber \\ 
&\stackrel{s_2}{\to}& (1, 1, -1, 1, -1, -1) \stackrel{s_3}{\to} (1, 1, 1, -1, -1, -1) \stackrel{s_0}{\to} (0, 1, 1, -1, -1, 0) \nonumber\\ 
&\stackrel{s_1}{\to}& (1, 0, 1, -1, 0, -1) \stackrel{s_2}{\to} (1, 1, 0, 0, -1, -1) \stackrel{s_0}{\to} (0, 1, 0, 0, -1, 0)  \nonumber \\ 
&\stackrel{s_1}{\to}& (1, 0, 0, 0, 0, -1) \stackrel{s_0}{\to} (0, 0, 0, 0, 0, 0) \nonumber \\
( 1, -1, 1, -1) &\stackrel{1}{\to}& (1, -1, 1, -1) \stackrel{1}{\to} (1, -1, 1, -1) \stackrel{s_2}{\to} (1, 1, -1, -1)  \nonumber \\ 
&\stackrel{1}{\to} &(1, 1, -1, -1) \stackrel{1}{\to} (1, 1, -1, -1) \stackrel{s_0}{\to} (0, 1, -1, 0) \nonumber \\  
&\stackrel{s_1}{\to}& (1, 0,  0, -1) \stackrel{1}{\to} (1, 0, 0, -1) \stackrel{s_0}{\to} (0, 0, 0,  0) \nonumber \\ 
&\stackrel{1}{\to} &(0, 0, 0, 0) \stackrel{1}{\to} (0, 0, 0, 0) \nonumber
\end{eqnarray}
\end{exam}

\begin{cor}[The map $\Phi_n^k$ decreases  length by exactly $k$]\label{MainTheorem2b}
 For any abacus $\mathfrak{a}$ corresponding to a symmetric core partition in $\mathcal{S}_{2n}^k$, we have that $\ell(\Phi_n^k(\mathfrak{a})) = \ell(\mathfrak{a})-k$. Here, by $\ell(\mathfrak{a})$ we mean the length of the canonical reduced word corresponding to the abacus $\mathfrak{a}$.
\end{cor}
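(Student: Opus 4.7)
The corollary is essentially a direct consequence of the proposition immediately preceding it, together with the explicit step counting performed during the construction of the word $t_1 \cdots t_\ell$. My plan is to extract the length information that is already implicit in the algorithm and then invoke Lemma~\ref{lemLambda1} to convert ``first part of the core'' into the integer $k$.

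First I would recall the setup: given a reduced word $s_{i_1}\cdots s_{i_\ell}$ for $\mathfrak{a}$, the algorithm assigns to each generator $s_{i_r}$ either the identity $1 \in \widetilde{C}_{n-1}$ (precisely when $s_{i_r}$ moves the largest runner in the abacus $s_{i_{r-1}}\cdots s_{i_1}\cdot\mathfrak{a}$) or a genuine generator $s_{i_r'} \in \widetilde{C}_{n-1}$. The just-proved proposition establishes that $t_1 \cdots t_\ell$ is a reduced word for $\Phi_n(\mathfrak{a})$. Therefore $\ell(\Phi_n(\mathfrak{a}))$ equals the number of indices $r$ for which $t_r \ne 1$.

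Next I would count the number of indices $r$ for which $t_r = 1$. By the description of the Coxeter action on abaci from Section~3.2 of \cite{HanusaJones} (as summarized earlier), each generator whose action alters the position or level of the largest runner either shifts the largest runner one step to the left or, when it is already in runner $1$, drops it one level and sends it to runner $2n$. Hence the total number of such generators appearing in a reduced word $s_{i_1}\cdots s_{i_\ell}$ for $\mathfrak{a}$ equals the total ``travel distance'' of the largest runner from its initial position in $\mathfrak{a}$ down to level $0$ in runner $1$ (which is its position in the identity abacus). By Lemma~\ref{lemLambda1}, this travel distance is exactly the first part $\lambda_1$ of the core $F_{\mathscr{S}}(\mathfrak{a})$, since the lowest bead of the largest runner in $\mathfrak{a}$ sits at the unique $(\ell, i)$ with $\lambda_1 = 2n(\ell-1) + i$. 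Consequently the algorithm produces exactly $\lambda_1$ trivial letters and $\ell - \lambda_1$ nontrivial ones, so $\ell(\Phi_n(\mathfrak{a})) = \ell - \lambda_1$.

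Finally, specializing to $\mathfrak{a}$ with $F_{\mathscr{S}}(\mathfrak{a}) \in \mathscr{S}_{2n}^k$ gives $\lambda_1 = k$, which yields $\ell(\Phi_n^k(\mathfrak{a})) = \ell(\mathfrak{a}) - k$ as claimed. The only real content in this proof is the counting step in the middle paragraph, and I expect that to be the main (mild) obstacle: one must verify that the algorithm marks a step as ``trivial'' in $\widetilde{C}_{n-1}$ precisely when the largest runner moves, and that the cumulative count of such moves for any reduced expression taking $\mathfrak{a}$ to the identity abacus is exactly $\lambda_1$. Both facts are already implicit in the preceding discussion and in Lemma~\ref{lemLambda1}, so no new computation is required.
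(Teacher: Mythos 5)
Your proposal is correct and takes essentially the same route as the paper: the corollary is deduced from the immediately preceding proposition together with the count (justified via Lemma~\ref{lemLambda1}) that exactly $\lambda_1 = k$ of the letters $t_r$ produced by the algorithm are trivial, so the reduced word $t_1\cdots t_\ell$ for $\Phi_n^k(\mathfrak{a})$ has length $\ell - k$. The paper leaves the corollary without a separate proof for precisely this reason, and your ``travel distance'' bookkeeping for the largest runner matches the paper's own assertion about the $\lambda_1$ largest-runner moves.
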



\section{The Map $\Phi_n^k$ preserves Bruhat order} \label{sectionProperties}

Fix a hyperplane $H_n^k$.  In this section, we will show that under the identification of $H_n^k$ with $\mathbb{R}^{n-1}$, strong Bruhat order is preserved when alcoves having coroot lattice points lying on $H_n^k$ are projected onto $H_n^k$.  

\begin{thm}[Theorem 5.11 in \cite{HanusaJones}]\label{coreContainment}
Let $x, y \in \widetilde{W}/W$.  Then $x \geq_B y$ if and only if the core diagram for $x$ contains the core diagram for $y$.
\end{thm}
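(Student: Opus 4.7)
The plan is to prove both implications simultaneously by induction on $\ell(x)+\ell(y)$, combining the exchange/lifting property of Bruhat order on the parabolic quotient $\widetilde{W}/W$ with the compatibility, afforded by Theorem~\ref{C_nCoreAction}, between the action of $\widetilde{W}$ on cores and left multiplication by simple reflections on minimal length coset representatives. The base case, in which $x$ or $y$ equals the identity, is immediate: the empty partition is the core of the identity, it is contained in every core, and the identity is the unique Bruhat minimum.

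For the direction $x\geq_B y \Rightarrow \lambda_x \supseteq \lambda_y$, I would assume $x>_B y$ and invoke the lifting property to produce a simple reflection $s_i\in\widetilde{S}$ with $\ell(s_i x)=\ell(x)-1$ and $s_i x \geq_B y$. By Theorem~\ref{C_nCoreAction}, the core $\lambda_{s_i x}$ is obtained from $\lambda_x$ by deleting all removable $i$-boxes. The inductive hypothesis gives $\lambda_{s_i x}\supseteq \lambda_y$, so the only possible obstruction to $\lambda_x\supseteq \lambda_y$ is that one of the $i$-boxes deleted from $\lambda_x$ already lay in $\lambda_y$. If that occurs, such a box would be a removable $i$-corner of $\lambda_y$ as well (a corner of $\lambda_y$ sitting at a corner of $\lambda_x$), which forces $\ell(s_i y)<\ell(y)$; one then applies the inductive hypothesis to the pair $(s_i x, s_i y)$ and uses the lifting property once more to return to the pair $(x,y)$.

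For the converse direction, I would assume $\lambda_y\subsetneq \lambda_x$ and pick a removable $i$-corner of $\lambda_x$ lying in $\lambda_x\setminus \lambda_y$; such a corner must exist, since if every outer corner of $\lambda_x$ were already a corner of $\lambda_y$ the two diagrams would coincide. Applying $s_i$ to $x$ strictly decreases its length. Two cases arise: if none of the deleted $i$-boxes lies in $\lambda_y$, then $\lambda_{s_i x}\supseteq \lambda_y$ and the inductive hypothesis gives $s_i x\geq_B y$, whence $x\geq_B y$. Otherwise some deleted $i$-box is a corner of $\lambda_y$, which forces $\ell(s_i y)<\ell(y)$ and $\lambda_{s_i y}\subseteq \lambda_{s_i x}$; the inductive hypothesis then gives $s_i x\geq_B s_i y$, and another application of the lifting property produces $x\geq_B y$.

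The principal obstacle is the multi-box nature of the $\widetilde{W}$-action on cores: a single simple reflection can add or remove several $i$-boxes at once, and one must check that the inductive step never moves outside the class of minimal length coset representatives and never destroys the containment hypothesis. The technical heart of the argument is the verification that whenever a shared removable $i$-corner exists between $\lambda_x$ and $\lambda_y$, it forces a simultaneous length decrease on both sides so that the induction closes; this careful bookkeeping on how corners are distributed between the two diagrams is where I expect the bulk of the real work to lie.
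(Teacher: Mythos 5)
This theorem is not actually proved in the paper: it is imported wholesale as Theorem 5.11 of Hanusa--Jones, so there is no in-paper argument to measure your proposal against. Judged on its own terms, your induction on $\ell(x)+\ell(y)$ is a viable route and close in spirit to how such statements are usually established, but as written it has one genuine confusion and leaves the real technical content unproved.

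In the forward direction, once the inductive hypothesis yields $\lambda_{s_i x}\supseteq\lambda_y$ you are immediately done, because $\lambda_{s_i x}\subseteq\lambda_x$ (the action of a descent only deletes boxes); there is no ``obstruction'' left to analyze. The actual issue is whether you may invoke the inductive hypothesis on the pair $(s_i x,\,y)$ at all, i.e.\ whether $s_i x\geq_B y$. The lifting property guarantees this only when $s_i$ is \emph{not} a left descent of $y$; when $s_i y<_B y$ it gives only $s_i y\leq_B s_i x$, and you must then show that applying $s_i$ (adding all addable $i$-boxes) to both $\lambda_{s_i y}\subseteq\lambda_{s_i x}$ preserves containment. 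That monotonicity lemma is the heart of the step and needs a proof --- e.g.\ an addable $i$-box of the smaller core not already present in the larger one has its north and west neighbours in the larger core, hence is addable to it with the same residue. Your case split by ``whether a deleted $i$-box of $\lambda_x$ lies in $\lambda_y$'' does not match the case split the lifting property actually requires (which is governed by the descent set of $y$), and your parenthetical ``a corner of $\lambda_y$ sitting at a corner of $\lambda_x$'' tacitly assumes $\lambda_y\subseteq\lambda_x$, which is the conclusion being proved. The converse direction, and your observation that a maximal box of $\lambda_x\setminus\lambda_y$ is a removable corner of $\lambda_x$, are sound. Finally, the argument silently relies on two facts about the type $C$ setup that should be cited or verified: a symmetric $(2n)$-core never carries both addable and removable boxes of the same residue (so that ``$\lambda_x$ has a removable $i$-box'' is equivalent to ``$s_i$ is a left descent of $x$'' and Theorem~\ref{C_nCoreAction} is unambiguous), and left multiplication by a descent stays within the set of minimal length coset representatives, on which Bruhat order restricts correctly.
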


We begin with a lemma on abacus diagrams, which uses the above fact that strong Bruhat order is equivalent to containment of cores.  By an abuse of notation, we will use the same letters to denote both the abacus diagram and the core partition associated to an element of $\widetilde{C}_n/C_n$.  

\begin{lem}\label{kthBead}
Let $x$ and $y$ be elements in $\widetilde{C}_n/C_n$.  Define the the $k^{\text{th}}$ highest bead in an abacus diagram by reading along levels right to left, starting with the highest level.  Then $x \geq_B y$ if and only if for all $k \geq 1$, the $k^{\text{th}}$ highest bead in $x$ is as high as the $k^{\text{th}}$ highest bead in $y$.  
\end{lem}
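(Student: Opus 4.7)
The proof naturally splits into a reduction step and a combinatorial translation. By Theorem \ref{coreContainment} (Theorem 5.11 of \cite{HanusaJones}), the comparison $x \geq_B y$ is equivalent to containment of the associated symmetric $(2n)$-cores, so it suffices to show that $F_{\mathscr{S}}(\mathfrak{a}_x) \supseteq F_{\mathscr{S}}(\mathfrak{a}_y)$ is equivalent to the bead-height condition stated in the lemma. My plan is to build an explicit dictionary between the enumeration of beads in the right-to-left, top-down order and the sequence of parts of the core partition, and then verify that containment of cores corresponds exactly to termwise domination of bead heights.

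To build the dictionary, first observe that in any balanced flush abacus $\mathfrak{a}$, the bead positions enumerated in the prescribed right-to-left top-down order form an infinite sequence $p_1(\mathfrak{a}) \prec p_2(\mathfrak{a}) \prec \cdots$ in which eventually every position is occupied (since the beads are flush at the bottom of each runner). The rows of the core $F_{\mathscr{S}}(\mathfrak{a})$ are computed from the number of gaps preceding each active bead in reading order, which in turn is determined by the sequence $(p_k(\mathfrak{a}))_{k \geq 1}$. I would then prove that the length $\lambda_k$ of the $k$-th row depends monotonically on $p_k(\mathfrak{a})$ once $p_1(\mathfrak{a}), \ldots, p_{k-1}(\mathfrak{a})$ are fixed: raising the $k$-th bead (equivalently, moving $p_k(\mathfrak{a})$ earlier in the ordering) strictly increases the corresponding row length, and these are the only parameters that affect it.

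Given this dictionary, both implications of the lemma follow by induction on $k$. If the bead-height inequalities hold for every $k$, then row-by-row monotonicity yields $\lambda_k^x \geq \lambda_k^y$ for all $k$, i.e.\ the cores are nested. Conversely, containment of cores allows us to read off the bead heights inductively, starting from $k = 1$. The principal obstacle is the dictionary itself: the right-to-left ordering does not agree with the standard reading order within a level, so I will need to track carefully which gaps are counted before a given bead and verify that the count depends only on $p_1(\mathfrak{a}), \ldots, p_k(\mathfrak{a})$ in the predicted way. Additional care is required at levels that contain several beads, where the tie-breaking by decreasing runner number must be shown to correspond exactly to the natural ordering among rows of the core of a common length.
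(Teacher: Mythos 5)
Your proposal follows essentially the same route as the paper's proof: both reduce to containment of symmetric cores via Theorem~\ref{coreContainment} and then translate the statement "the $k^{\text{th}}$ highest bead of $x$ is at least as high as that of $y$" into "the $k^{\text{th}}$ part of the core of $x$ is at least as long," by counting the gaps preceding the $k^{\text{th}}$ highest active bead (the paper anchors this count with Lemma~\ref{lemLambda1} for $k=1$ and compares positions for general $k$, which is exactly your monotonicity dictionary made explicit). The argument is correct; your observation that $\lambda_k$ in fact depends only on the position of the $k^{\text{th}}$ bead and on $k$ itself would let you streamline the induction.
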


\begin{proof}
The number of gaps smaller than the $k^{\text{th}}$ highest bead in $x$ and $y$ is the length of the $k^{\text{th}}$ row in the core partitions of $x$ and $y$.  Let the highest bead in $x$ be located on runner $i_x$ at level $\ell_x$, and the highest bead for $y$ on runner $i_y$ at level $\ell_y$.  By Lemma~\ref{lemLambda1}, the number of gaps in $x$ that are smaller than the highest bead in $x$ equals $2n (\ell_x - 1) + i_x$, and the number of gaps in $y$ that are smaller than the highest bead in $y$ equals $2n(\ell_y -1) + i_y$.  

First suppose $x \geq_B y$.  It is clear from the previous paragraph that the highest bead in $x$ is as high as the highest bead in $y$.  Moreover, there are $2n(\ell_x - \ell_y) + (i_x - i_y)$ more gaps in $x$ that are smaller than the first highest bead than there are in $y$.  This number is exactly the number of positions that are higher than the highest bead in $y$, but not higher than the highest bead in $x$.  It can be easily checked that since the core partition for $x$ contains the core partition of $y$, the number of gaps smaller than the $k^{\text{th}}$ bead in $x$ is at least the number of gaps smaller than the $k^{\text{th}}$ bead in $y$, and so the $k^{\text{th}}$ bead in $x$ must be as high as the $k^{\text{th}}$ bead in $y$, as desired. 

To prove the converse, note that from the discussion above, if the $k^{\text{th}}$ highest bead in $x$ is as high as the $k^{\text{th}}$ highest bead in $y$, then the number of gaps that are smaller than the $k^{\text{th}}$ bead in $x$ is at least the number of gaps that are smaller than the $k^{\text{th}}$ highest bead in $y$.  It follows that the core partition of $x$ contains the core partition of $y$, and so $x \geq_B y$. 
\end{proof}

\begin{thm}[Bruhat order is preserved]\label{BruhatPreserved}
Let $x$ and $y$ be elements in $\widetilde{C}_n/C_n$ whose associated coroot lattice points lie on $H_n^k$.  Identify $H_n^k$ with $\mathbb{R}^{n-1}$ and let $\pi$ be the projection map onto $H_n^k$.  Then $x \geq_B y$ if and only if $\pi(x) \geq_B \pi(y)$. 
\end{thm}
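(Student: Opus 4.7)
The plan is to use Lemma~\ref{kthBead} to recast Bruhat order on $\widetilde{C}_n/C_n$ as a manifestly projection-invariant inequality on a bead-counting function. The key observation is that, for any element whose coroot lattice point lies on $H_n^k$, the two runners of the corresponding abacus that $\pi$ removes sit at fixed positions $\ell_2 := k \pmod{2n}$ (with $1 \leq \ell_2 \leq 2n$) and $2n+1-\ell_2$, and at the fixed levels $\pm\lceil k/(2n)\rceil$, independent of the particular element.

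To each balanced flush abacus $\mathfrak{a}$ on $2n$ runners with lowest-bead levels $\ell_1(\mathfrak{a}), \ldots, \ell_{2n}(\mathfrak{a})$, I would associate the counting function
\[
B_{\mathfrak{a}}(\ell) := \sum_{i=1}^{2n} \max\!\bigl(0,\, \ell_i(\mathfrak{a}) - \ell + 1\bigr),
\]
which records the total number of beads of $\mathfrak{a}$ sitting at level $\geq \ell$. A short rearrangement argument shows that Lemma~\ref{kthBead} is equivalent to the statement that $\mathfrak{a} \geq_B \mathfrak{a}'$ if and only if $B_{\mathfrak{a}}(\ell) \geq B_{\mathfrak{a}'}(\ell)$ for every integer $\ell$; indeed, the $m^{\text{th}}$ highest bead of $\mathfrak{a}$ has level $\geq \ell$ precisely when $B_{\mathfrak{a}}(\ell) \geq m$, so the two formulations convert into each other by swapping quantifiers on levels and indices.

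Next, if $x, y \in \widetilde{C}_n/C_n$ have coroot lattice points on $H_n^k$, then by Theorem~\ref{T:corootcoords} together with the definition of $H_n^k$ in Section~\ref{MapCoroot}, each of the abaci $\mathfrak{a}_x$ and $\mathfrak{a}_y$ has its lowest bead on runner $\ell_2$ at level $\lceil k/(2n)\rceil$ and, by the balanced condition, on runner $2n+1-\ell_2$ at level $-\lceil k/(2n)\rceil$. Under this bijection, $\pi$ acts on the abacus level by deleting precisely these two runners. Since each deleted runner is flush with lowest bead at a level that is the same for $x$ and $y$, its contribution to $B$ is a fixed function of $\ell$, yielding
\[
B_{\mathfrak{a}_x}(\ell) - B_{\pi(\mathfrak{a}_x)}(\ell) = B_{\mathfrak{a}_y}(\ell) - B_{\pi(\mathfrak{a}_y)}(\ell) = \max\!\bigl(0, \lceil k/(2n)\rceil - \ell + 1\bigr) + \max\!\bigl(0, -\lceil k/(2n)\rceil - \ell + 1\bigr).
\]
Subtracting this common quantity from both sides, $B_{\mathfrak{a}_x}(\ell) \geq B_{\mathfrak{a}_y}(\ell)$ for every $\ell$ if and only if $B_{\pi(\mathfrak{a}_x)}(\ell) \geq B_{\pi(\mathfrak{a}_y)}(\ell)$ for every $\ell$, which by the reformulation above is exactly the desired biconditional $x \geq_B y \Leftrightarrow \pi(x) \geq_B \pi(y)$. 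I expect the principal subtlety to be the verification of the reformulation of Lemma~\ref{kthBead}: the abacus enumeration of beads is formally infinite, but balancedness forces $B_{\mathfrak{a}_x}$ and $B_{\mathfrak{a}_y}$ to coincide with the common value $2n(1-\ell)$ for $\ell \ll 0$, so the comparison collapses to a finite one on the active region and no pathology arises at the tail.
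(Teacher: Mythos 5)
There is a genuine gap: your reformulation of Lemma~\ref{kthBead} in terms of the level-counting function $B_{\mathfrak{a}}(\ell)$ is strictly coarser than the lemma itself, and the equivalence you assert does not hold. In Lemma~\ref{kthBead} the beads are ordered by the full reading order (i.e., by their integer labels $mN+i$, which record the runner as well as the level), and ``as high as'' is a comparison in that order; this is what makes the lemma match row-by-row containment of cores via the gap counts, as in Theorem~\ref{coreContainment}. Your function $B_{\mathfrak{a}}(\ell)$ only counts beads per level and forgets which runner each bead occupies within a level, and that information genuinely affects the core. A concrete counterexample with both elements on the same hyperplane: take $n=3$ (so $N=7$, six runners) and the elements with coroot lattice points $(1,1,0)$ and $(1,0,1)$, both on $H_3^1=\{v: \langle v,\varepsilon_1\rangle =1\}$. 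Their abaci are $(1,1,0,0,-1,-1)$ and $(1,0,1,-1,0,-1)$; the multisets of bead levels are identical (so $B_x(\ell)=B_y(\ell)$ for every $\ell$, and your criterion would force $x\geq_B y$ and $y\geq_B x$ simultaneously), yet their cores are $(2,2)$ and $(3,2,1)$ respectively, so by Theorem~\ref{coreContainment} the second element is strictly greater than the first in Bruhat order. Hence the ``short rearrangement argument'' equating Lemma~\ref{kthBead} with pointwise comparison of $B$ cannot exist.

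The failure is located exactly where the real work of the theorem lies. If you upgrade $B$ to count beads by label rather than by level, the reformulation of Lemma~\ref{kthBead} becomes correct, but then your cancellation step breaks: the two deleted runners do contribute the same \emph{set of labels} to $\mathfrak{a}_x$ and $\mathfrak{a}_y$, but those beads occupy different positions (ranks) in the two sorted bead sequences, so their removal shifts the remaining beads of $x$ and of $y$ by different amounts at each index, and moreover the surviving beads are relabeled when the abacus is reassembled on $2n-2$ runners. The paper's proof confronts precisely this point: it fixes $k$, compares the rank of each deleted bead inside $x$ with its rank inside $y$, shows the former is never higher than the latter (using the hypothesis $x\geq_B y$ via Lemma~\ref{kthBead}), and deduces that the $k^{\text{th}}$ highest surviving bead of $x$ is still as high as that of $y$; the converse is handled by the symmetric insertion argument. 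Some version of that rank-tracking step, or an equally careful treatment of the relabeling, is unavoidable and is missing from your proposal.
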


\begin{proof}
Our proof will use abacus diagrams.  Suppose $x \geq_B y$.  By Lemma~\ref{kthBead}, the $k^{\text{th}}$ highest bead in $x$ is as high as the $k^{\text{th}}$ highest bead in $y$.  When we apply the projection map $\pi$, we delete the same beads in both abaci.  When we record the beads in an abacus diagram listed from highest to lowest $(b_1, b_2, b_3, \dots)$, define the rank of bead $b_i$ to be its position $i$ in this list.  Consider the ranks of the deleted beads in $x$ and $y$.  Since the $k^{\text{th}}$ highest bead in $x$ is at least as high as the $k^{\text{th}}$ highest bead in $y$, the rank of a deleted bead in $x$ is no higher than its rank in $y$.  After deleting those beads, the $k^{\text{th}}$ highest bead in $\pi(x)$ is still as high as the $k^{\text{th}}$ highest bead in $\pi(y)$ for all $k \geq 1$.  It follows that $\pi(x) \geq \pi(y)$. 

Conversely, suppose $\pi(x) \geq_B \pi(y)$.  By Lemma~\ref{kthBead} again, the $k^{\text{th}}$ highest bead in $\pi(x)$ is as high as the $k^{\text{th}}$ highest bead in $\pi(y)$ for all $k \geq 1$.  To obtain $x$ and $y$ from $\pi(x)$ and $\pi(y)$, we are inserting the same beads for both abaci.  Given a bead to be inserted, its rank after its insertion into $\pi(x)$ is no higher than its rank after its insertion into $\pi(y)$.  Therefore, after the addition of all the beads, the $k^{\text{th}}$ highest bead in $x$ is still as high as the $k^{\text{th}}$ highest bead in $y$.  It follows that $x \geq_B y$, as desired. 
\end{proof}

\begin{figure}[htp]
\begin{center}
\subfigure[$x = (1, 2, -2, 2, -2, -1)$]{\includegraphics[scale = 0.6]{Cabacus2}} \hspace{0.3in}
\subfigure[$y = (1, 0, -2, 2, 0, -1)$]{\includegraphics[scale = 0.6]{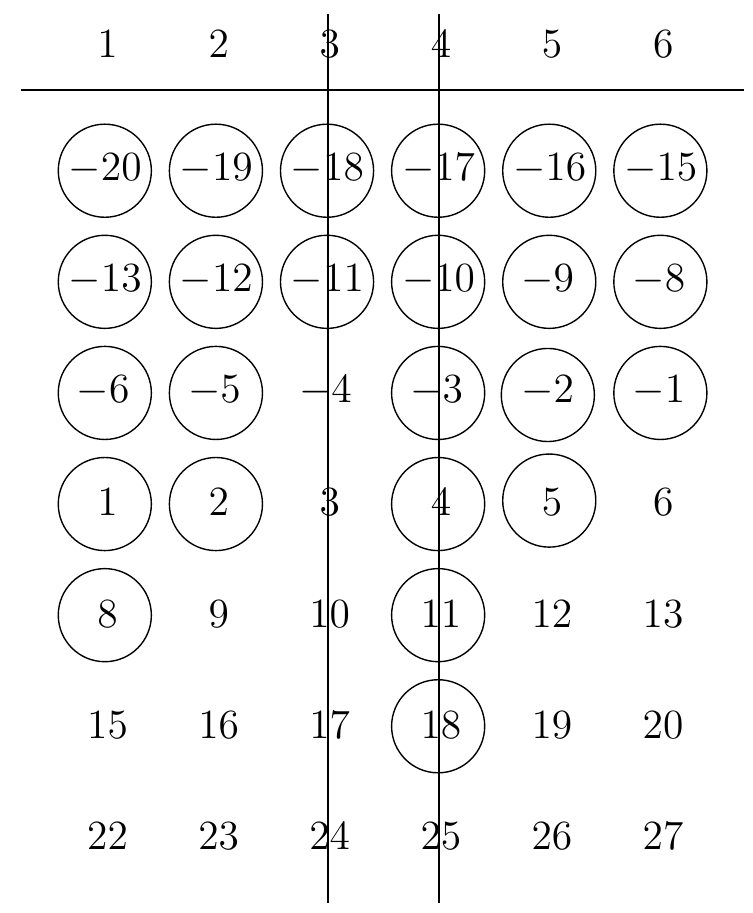}} 
\caption{The map on abaci preserves Bruhat order}
\label{figBruhatOrderIf}
\end{center}
\end{figure}

\begin{rem}
We remark that Theorem~\ref{BruhatPreserved} also generalizes to affine type $A$.  Indeed, the corresponding statement of Lemma~\ref{kthBead} for affine type $A$ is a consequence of the version of the lemma proved here in affine type $C$.  Since Theorem~\ref{coreContainment} holds for affine type $A$ as well, our proof of Theorem~\ref{BruhatPreserved}, used word for word, gives a proof of the fact that $\Phi_n^k$ also preserves strong Bruhat order in affine type $A$. 
\end{rem}

\begin{exam} \rm
As shown in Figure~\ref{figBruhatOrderIf}, the beads in $x = (1, 2, -2, 2, -2, -1)$ are numbered from highest to lowest by $(\underline{18}, 16, \underline{11}, 9, 8, \underline{4}, 2, 1, -1, \underline{-3}, -5, -6, \ldots)$, and the beads in $y = (1, 0, -2, 2, 0, -1)$ are numbered $(\underline{18}, \underline{11}, 8, 5, \underline{4}, 2, 1, -1, -2, \underline{-3}, -5, -6, -8, \ldots)$, where the beads to be deleted are underlined.  The ranks of the deleted beads in $x$ are $(1, 3, 6, 10, \ldots)$, which are no higher than  the ranks of the deleted beads in $y$, which are $(1, 2, 5, 10, \ldots)$.  After the deletion of these beads, the $k^{\text{th}}$ highest bead in $\pi(x)$ is still as high as the $k^{\text{th}}$ highest bead in $\pi(y)$, and so $\pi(x) \geq_B \pi(y)$.   The converse is illustrated in a similar fashion.


\end{exam}

\bibliographystyle{amsplain}

\bibliography{references}

\end{document}